


\documentclass[
final,
UKenglish,			
a4paper,			
bibliography=totoc	
]{scrartcl}


\author{Marcel Fenzl\footnote{University of Zurich, Winterthurerstrasse 190, 8057 Zürich, Switzerland. \newline Email: \href{mailto:marcel.fenzl@math.uzh.ch}{marcel.fenzl@math.uzh.ch}}}
\title{Asymptotic results for stabilizing functionals of point processes having fast decay of correlations}


\makeatletter
\def\blfootnote{\gdef\@thefnmark{}\@footnotetext}
\makeatother

\newcommand{\subjclass}[1]{\blfootnote{\textup{2010} \textit{Mathematics Subject Classification.} #1}}
\newcommand{\keywords}[1]{\blfootnote{\textit{Key words and phrases.} #1}}


\usepackage{babel}
\usepackage[T1]{fontenc}
\usepackage[utf8]{inputenc}
\usepackage[useregional]{datetime2} 

\usepackage{lmodern}
\usepackage{microtype}





\usepackage{fzlmath}




\usepackage{doi}
\usepackage{url}

\bibliographystyle{fzlbst-eprint}


\usepackage{enumitem}
\setlist[enumerate,1]{label={(\roman*)}}

\usepackage{hyperref}
\hypersetup{
	pdfauthor={Marcel Fenzl},
	pdftitle={Asymptotic results for stabilizing functionals of point processes having fast decay of correlations},
	pdfsubject={2010 Mathematics Subject Classification: Primary 60F10, 60D05; Secondary 60G55, 05C80, 52A22},
	pdfkeywords={Stabilizing functionals, point processes having fast decay of correlations, explicit bounds, cumulants, random graphs, random packing, determinantal point processes, Gibbs point processes, Berry-Esseen bounds, moderate deviations, concentration inequalities},
	bookmarksopen={true},
	final
}

\usepackage{fzlthm} 


\usepackage{xparse}

\ExplSyntaxOn
\NewDocumentCommand{\multiadjustlimits}{m}
 {
  \group_begin:
  \multiadjustlimits_measure:n { #1 }
  \multiadjustlimits_print:n { #1 }
  \group_end:
 }

\tl_new:N  \l__multiadjustlimits_operator_tl
\tl_new:N  \l__multiadjustlimits_limit_tl

\cs_new_protected:Nn \multiadjustlimits_measure:n
 {
  \clist_map_function:nN { #1 } \__multiadjustlimits_measure:n
 }
\cs_new_protected:Nn \__multiadjustlimits_measure:n
 {
  \__multiadjustlimits_measure:NNn #1
 }
\cs_new_protected:Nn \__multiadjustlimits_measure:NNn
 {
  \tl_put_right:Nn \l__multiadjustlimits_operator_tl { #1 }
  \tl_put_right:Nn \l__multiadjustlimits_limit_tl { #3 }
 }

\cs_new_protected:Nn \multiadjustlimits_print:n
 {
  \clist_map_function:nN { #1 } \__multiadjustlimits_print:n
 }
\cs_new_protected:Nn \__multiadjustlimits_print:n
 {
  \__multiadjustlimits_print:NNn #1
 }
\cs_new_protected:Nn \__multiadjustlimits_print:NNn
 {
  \mathop { \vphantom{\l__multiadjustlimits_operator_tl} \mathopen{} #1 }
  \limits
  \sb{ \vphantom{\l__multiadjustlimits_limit_tl} #3 }
 }

\ExplSyntaxOff

\DeclareMathOperator{\dist}{dist}

\usepackage{bm}
\newcommand{\mult}[1]{\bm{#1}}

\DeclareMathOperator{\EDC}{EDC}
\DeclareMathOperator{\BC}{BC}
\DeclareMathOperator{\ST}{S}
\DeclareMathOperator{\MG}{MG}
\DeclareMathOperator{\PG}{PG}
\DeclareMathOperator{\SIG}{SIG}



\begin{document}

\maketitle


\subjclass{Primary 60F10, 60D05; Secondary 60G55, 05C80, 52A22}
\keywords{Stabilizing functionals, point processes having fast decay of correlations, explicit bounds, cumulants, random graphs, random packing, determinantal point processes, Gibbs point processes, Berry-Esseen bounds, moderate deviations, concentration inequalities}

\begin{abstract}
We establish precise bounds on cumulants for a rather general class of non-linear geometric functionals satisfying the stabilization property under a simple, stationary (marked) point process admitting fast decay of its correlation functions and thereby conclude a Berry-Esseen bound, a concentration inequality, a moderate deviation principle and a Marcinkiewicz-Zygmund-type strong law of large numbers. The result is applied to the germ-grain model as well as to random sequential absorption for $α$-determinantal point processes having fast decaying kernels and certain Gibbsian point processes. The proof relies on cumulant expansions using a clustering result as well as factorial moment expansions for point processes.
\end{abstract}




\section{Introduction and main results}

\subsection{General introduction}

Consider a stationary point process $\cP$ on $ℝ^d$ and its restriction $\cP_n = \cP\cap W_n$ to the box $W_n = \intcc{-\frac{1}{2}n^\frac{1}{d},\frac{1}{2}n^\frac{1}{d}}^d$ of volume $n$. Global geometric statistics of such point processes can often be described in terms of local contributions, i.e.\ the geometric statistic can be decomposed as a sum of spatially dependent terms in the form
\begin{equation}\label{eq:GeomStat}
	\sum_{x∈\cP_n} ξ\bigl(x,\cP_n\bigr).
\end{equation}
Here, the so-called score function $ξ$ depends on a point $x∈\cP_n$ as well as on the whole point configuration $\cP_n$ and takes values in $ℝ$. In particular, such statistics can be non-linear. It is generally impossible to reckon upon asymptotic results like laws of large numbers, central limit theorems or deviation probabilities for such geometric statistics, but under suitable locality conditions on the score function $ξ$ together with some form of independence between the points of the point process $\cP$ an asymptotic treatment becomes possible. In this article, we provide explicit bounds on cumulants and thereby establish Berry-Esseen bounds, concentration inequalities, moderate deviation principles and Marcinkiewicz-Zygmund-type strong laws of large numbers for such geometric statistics. Statistics we are able to investigate arise from geometric structures as various as random graphs, germ-grain models as well as random sequential packing and its extensions to name a few (see \cref{sec:Examples} for more details).

The notion of the score depending on local data only can be made precise by using the concept of stabilization. Roughly speaking, it requires the (random) range of dependence of the score function $ξ$ at $x∈\cP$ to be small in an appropriate sense. For a precise definition of stabilization consult \cref{def:ST}. Many asymptotic results are available for stabilizing statistics under Poisson and binomial input by now. The concept of stabilization was established in a series of works by Penrose and Yukich (see \cite{PY01CLTComputationalGeometry,PY02LimitTheoryRandomSequentialPacking,PY03WLLNGeometricProbability,PY05NormalApproximationGeometricProbability}) in which they prove weak laws of large numbers, central limit theorems and Berry-Esseen estimates for various statistics of e.g.\ the $k$-nearest neighbour graph, the sphere of influence graph, Voronoi tessellation and random sequential packing under Poisson point process input and under binomial input. Afterwards, the idea of stabilizing score functions gained much popularity in the study of geometric statistics. It turned out that instead of studying \cref{eq:GeomStat} directly considering the $ξ$-weighted empirical measure
\begin{equation*}
	μ_n^ξ = \sum_{x∈\cP_n} ξ\bigl(x,\cP_n\bigr) δ_{xn^{-1/d}}
\end{equation*}
and its evaluation against test functions $f\colon ℝ^d \to ℝ$ given by
\begin{equation}\label{eq:GeomStatEmpTested}
	μ_n^ξ(f) = \sum_{x∈\cP_n} ξ\bigl(x,\cP_n\bigr) f\bigl(xn^{-\frac{1}{d}}\bigr)
\end{equation}
provides more insight. For $μ_n^ξ(f)$, \cite{P07LLNStochasticGeometry} provides a strong law of large numbers. Further central limit theorems were established: In \cite{BY05GaussianLimitsGeometricProbability,P07GaussianLimitsRandomGeometricMeasure} central limit theorems were proven in the context of the random measure $μ_n^ξ$ and in \cite{P05MultivariateSpatialCLT} functional central limit theorems were considered. Refinements on the speed of convergence were obtained in \cite{ET14BENonlinearFunctionals} by using a Malliavin Stein approach. With the same approach, in \cite{LPS16NormalApproximation} the authors could prove conjecturally optimal Berry-Esseen bounds for various functionals in the case of Poisson input. A large deviation principle for stabilizing functionals was established in \cite{SY05LDPSpatialPointProcesses}. Moderate deviation principles bridging between the scale of the central limit theorem and the large deviation principle were proven in \cite{BESY08MDPGeometricProbability,ES10ProcessLevelMDP} for some functionals and later in \cite{ERS15MDPForStabilizingFunctionals} in more generality for stabilizing functionals under Poisson input. A survey about the concept of stabilization is provided in \cite{Y13LimitTheoremsStochastikGeometry}.

While all the previous articles only treat the case of Poisson or binomial point process input, the results here also apply to input different than but sufficiently close to the independent Poissonian one. A fruitful concept to characterize a point process as being close to the Poisson point process is the concept of exponentially fast decay of correlation functions. Roughly speaking, correlation functions are called to decay fast if they factor up to an (exponentially) small error. A precise formulation of this concept can be found in \cref{def:EDC}. Leading examples falling into the class of point processes with exponentially fast decay of correlations are $α$-determinantal point processes with decaying kernel and certain Gibbsian point processes. A first result proving a central limit theorem for linear statistics, i.e.\ $ξ$ depending only on $x$ and not on $\cP$, of determinantal point processes is provided in \cite{S02GaussianLimitDPP}. This is further extended to linear statistics of $α$-determinantal point processes in \cite{ST03RandomPointFieldsI}. The idea of exploiting exponentially fast decay of correlation functions goes back to \cite{M75CLTGibbsianRandomFields}, in which the concept was applied to linear statistics of certain Gibbsian measures. Various limiting results for the above-mentioned non-linear statistics of Gibbsian point processes were established in \cite{SY13GeometricFunctionalsGibbs}. By applying the idea of exponentially fast decay of correlations,  Błaszczyszyn, Yogeshwaran and Yukich established a unified approach to non-linear geometric statistics under all these different point processes in \cite{BYY19GeometricStatistics}. They prove both laws of large numbers and central limit theorems.

Our work extends the results found in \cite{BYY19GeometricStatistics} by providing an explicit bound on cumulants for the geometric statistic $μ_n^ξ(f)$ from \cref{eq:GeomStatEmpTested} (see \cref{thm:BoundCumulants}). By doing so, we can apply general results from the Lithuanian school \cite{SS91LimitTheorems} to translate the bound on cumulants into asymptotic results and thus add Berry-Esseen bounds (\cref{thm:BerryEsseen}), concentration inequalities (\cref{thm:Concentration}), moderate deviation principles (\cref{thm:MDP}) and Marcinkiewicz-Zygmund-type strong laws of large numbers (\cref{thm:SLLN}) to the known central limit theorems. This answers an open question posed in \cite[Remark (xi) following Theorem 1.14]{BYY19GeometricStatistics}. For a detailed discussion on the main difficulties in extending the central limit theorem from \cite{BYY19GeometricStatistics} to obtain fine asymptotic results we refer the reader to the end of \cref{sec:Results}. Compared to the deviation results in \cite{ERS15MDPForStabilizingFunctionals}, we are able to provide deviations for more general stationary point processes. We do not cover non-stationary Poisson point processes though. As to stationary Poisson input, we actually recover their result under slightly stronger assumptions on the score function $ξ$. Due to the generality of our results, we omit a precise statement here and refer the reader to \cref{sec:Results}.

Let us briefly outline the structure of this paper. In \cref{sec:Assumptions}, we present the main notions of stabilization and exponentially fast decay of correlations as well as the assumptions needed for our results. It also contains some notations we are using throughout the article. The main findings as well as a sketch of the idea of the proof can be found in \cref{sec:Results}. \Cref{sec:MarkedInput} provides an extension of our results to marked input point processes. In \cref{sec:ExamplesApplications}, we discuss several examples of point processes satisfying exponentially fast decay of correlations. Moreover, we show exemplary how our results can be applied to score functions for different geometric statistics. Concluding, \cref{sec:Proofs} provides the detailed proofs of our theorems presented in \cref{sec:Results}.


\subsection{Main notions and assumptions}\label{sec:Assumptions}

Within this section, we formalize the concepts of stabilization and having exponentially fast decay of correlations. Moreover, we provide the main assumptions necessary for our theorems. Recall that the goal of this article is to investigate the limiting behaviour of the random measure
\begin{equation*}
	μ_n^ξ = \sum_{x∈\cP_n} ξ\bigl(x,\cP_n\bigr) δ_{xn^{-1/d}}
\end{equation*}
for some simple point process $\cP$ and some score function $ξ$. Moreover, recall that $\cP_n = \cP\cap W_n$ with $W_n = \intcc{-\frac{1}{2}n^\frac{1}{d},\frac{1}{2}n^\frac{1}{d}}^d$ being the box of volume $n$. Sometimes, we use $\cP_∞ = \cP$. Denote by $\cN$ the set of locally finite simple point sets in $ℝ^d$. By score function we mean more precisely any function $ξ\colon ℝ^d\times \cN \to ℝ$ satisfying $ξ(x,\cX) = 0$ whenever $x\notin\cX$ which is measurable with respect to the standard $σ$-algebras on the respective spaces. Whenever we evaluate the random measure $μ_n^ξ$ at a test function $f\colon ℝ^d \to ℝ$, the function $f$ will be always measurable and bounded.

Throughout the article we denote indices in $ℕ$ by $i,j,k,p,q,…$ and the corresponding multi-indices by $\mult k = (k_1,…,k_p)∈ℕ^p$, etc. We also use $\abs{\mult k} = \sum_{i=1}^p k_i$, $\mult k! = \prod_{i=1}^p k_i!$ and $\abs{\mult k}! = (\sum_{i=1}^p k_i)!$. Moreover, for any set $I\subseteq \set{1,…,p}$, we denote $\mult k_I = (k_i)_{i∈I}$. Similarly, we denote points in $ℝ^d$ by $x,y,…$ and vectors of such points by $\mult x = (x_1,…,x_p)∈(ℝ^d)^p$, etc. Again, we denote $\mult x_I = (x_i)_{i∈I}$. Let us call $\mult x$ distinct if all its components $x_i$ are distinct elements in $ℝ^d$. For two points $x,y∈ℝ^d$ we denote their Euclidean distance by $\norm{x-y}$. The ball of radius $r∈ℝ_+$ around $x∈ℝ^d$ will be denoted by $B_r(x) = \set{y∈ℝ^d \given \norm{x-y}\le d}$. By $ϑ_d$ we note the volume of the $d$-dimensional unit ball. We further use $\dist(\mult x,\mult y) = \min_{i,j}\norm{x_i-y_j}$ for two collections of points $\mult x∈(ℝ^d)^p$ and $\mult y∈(ℝ^d)^q$ to denote the distance between the two vectors $\mult x$ and $\mult y$. By $r,s,t,…$ we denote real-valued numbers, and $c,C,C_1,…$ will be used for constants in $ℝ_+$, which are usually irrelevant for our results. For any $ℝ$-valued function $f$ we denote by $\norm{f}_∞$ its supremum norm.

Before being able to state all assumptions, we first review some notions from the theory of point processes. For a proper introduction we refer the reader to the text books \cite{DV03IntroductionPointProcessesI,DV08IntroductionPointProcessesII} and \cite{K17RandomMeasures}. As usual, we treat a point process simultaneously as a random measure or as a collection of random points. In particular, for any set $B\subseteq ℝ^d$, we denote by $\cP(B)$ the number of points of $\cP$ in $B$, and for any bounded function $f\colon ℝ^d \to ℝ$ we denote by $\cP(f)$ the integral of $f$ with respect to the random measure $\cP$. The $p$-point correlation function (provided it exists) $ρ^{(p)}\colon (ℝ^d)^p \to ℝ$ of $\cP$ (or sometimes also called joint intensity) is the function satisfying 
\begin{equation*}
	\Ex[\bigg]{\prod_{i=1}^p \cP(B_i)} = \int_{\prod\limits_{\mathclap{1\le i\le p}} B_i} ρ^{(p)}(\mult x) \dif \mult x
\end{equation*}
for any collection of mutually disjoint bounded Borel sets $B_1,…,B_p$ in $ℝ^d$ and vanishing on the diagonals, i.e.\ $ρ^{(p)}(\mult x) = 0$ for $\mult x$ which are not distinct. Roughly speaking, the $p$-point correlation function $ρ^{(p)}(\mult x)$ provides a measure for the probability of finding points in $\cP$ around $x_1,…,x_p$. Provided the $p$-point correlation function exists, one can derive an explicit formula for the $p$-th moment of $\cP(B)$ for some set $B\subseteq ℝ^d$:
\begin{equation*}
	\Ex[\big]{\cP(B)\bigl(\cP(B)-1\bigr)\dotsm \bigl(\cP(B)-p+1\bigr)} = \int_{B^p} ρ^{(p)}(\mult x) \dif \mult x
\end{equation*}
or equivalently
\begin{equation*}
	\Ex[\big]{\cP(B)^p} = \sum_{i=1}^p \stirlingII{p}{i} \int_{B^i} ρ^{(i)}(\mult x) \dif \mult x,
\end{equation*}
where $\stirlingII{p}{i}$ denote the Stirling numbers of second kind. Due to these relations, the correlation functions are sometimes also called factorial moment densities. From measure theoretic induction it follows that similar relations also hold true for bounded, measurable test functions $f\colon (ℝ^d)^p \to ℝ$:
\begin{equation}\label{eq:Campbell}
	\Ex[\bigg]{\smashoperator[r]{\sum_{\substack{\mult x∈\cP^p\\\mult x \text{ distinct}}}} f(\mult x)}
	= \int_{(ℝ^d)^p} f(\mult x) ρ^{(p)}(\mult x) \dif \mult x.
\end{equation}
This formula is known as Campbell-Mecke formula.

To be able to deal with statistics which might depend on the whole point process, we need an extension of the above-mentioned theory. Such an extension is available under the name of Palm theory. One can view \cref{eq:Campbell} in fact as the defining formula for the $p$-point correlation function. Following this approach, for any function $f\colon (ℝ^d)^p \times \cN \to ℝ$, we define the $p$-th Palm measure $\Prr{\mult x}{\cdot}$ for $\mult x∈(ℝ^d)^p$ as the $ρ^{(p)}(\mult x) \dif \mult x$-almost surely unique measure on $\cN$ satisfying the refined Campbell-Mecke formula
\begin{equation}\label{eq:refinedCampbell}
	\Ex[\bigg]{\smashoperator[r]{\sum_{\substack{\mult x∈\cP^p\\\mult x \text{ distinct}}}} f(\mult x;\cP)}
	= \int_{(ℝ^d)^p} \int_{\cN} f(\mult x;μ) \dif \Prr{\mult x}{μ} ρ^{(p)}(\mult x) \dif \mult x.
\end{equation}
For reasons of simplicity, we also define the Palm expectation $\Exx{\mult x}{\cdot}$ as the expectation with respect to the Palm measure. One can intuitively imagine the Palm measure as the distribution of the point process $\cP$ conditioned on having points at $x_1,…,x_p$. When considering the $ξ$-weighted measure $μ^ξ = \sum_{x∈\cP} ξ(x,\cP)δ_{xn^{-1/d}}$ and any test function $f\colon ℝ^d \to ℝ$, the refined Campbell-Mecke formula now extends to
\begin{align*}
	\MoveEqLeft \Ex[\Big]{\bigl(μ^ξ(f)\bigr)^p}
	= \Ex[\bigg]{\Bigl(\sum_{x∈\cP} ξ(x,\cP)f(x)\Bigr)^p}\\
	&= \sum_{(π_1,…,π_k)∈\cQ_p} \int_{ℝ^k} f\bigl(x_1n^{-\frac{1}{d}}\bigr)^{\abs{π_1}} \dotsm f\bigl(x_kn^{-\frac{1}{d}}\bigr)^{\abs{π_k}} \Exx[\bigg]{\mult x}{\prod_{i=1}^k ξ(x_i,\cP)^{\abs{π_i}}}ρ^{(k)}(\mult x) \dif \mult x,
\end{align*}
where $\cQ_p$ denotes the set of all set partitions of $\set{1,…,p}$. This shows that we can interpret
\begin{equation*}
	m_{\mult k}(\mult x) = \Exx[\bigg]{\mult x}{\prod_{i=1}^p ξ(x_i,\cP)^{k_i}} ρ^{(p)}(\mult x)
\end{equation*}
with $\mult k∈ℕ^p$ and $\mult x∈(ℝ^d)^p$ as the correlation function of the $ξ$-weighted measure $μ^ξ$.
For more insight into Palm theory we refer the reader to \cite[Section 13]{DV08IntroductionPointProcessesII} and \cite[Chapter 6]{K17RandomMeasures}. With that, we are ready to state the assumptions needed for our results.

\subsubsection*{Translation invariance}

Throughout the article, we always assume the point process $\cP$ on $ℝ^d$ to be stationary, i.e.\ the translation $\cP+x$ for some $x∈ℝ^d$ has the same distribution as the point process $\cP$ itself. Moreover, we assume the score function to be translation invariant, meaning that for all points $z∈ℝ^d$ and $x∈\cX∈\cN$ it holds $ξ(x+z,\cX+z) = ξ(x,\cX)$. Both properties will always be assumed without further mentioning them explicitly every single time.

\subsubsection*{Exponentially fast decay of correlations}

We say a function $Φ\colon ℝ_+ \to ℝ_+$ is $\hat{a}$-exponentially fast decaying for some parameter $\hat{a}>0$ if
\begin{equation*}
	\limsup_{s→∞} \frac{\log Φ(s)}{s^{\hat{a}}} < 0
\end{equation*}
or, put differently, there exist constants $c,C>0$ such that
\begin{equation*}
	Φ(s) \le Ce^{-cs^{\hat{a}}}.
\end{equation*}
\begin{definition}[EDC: Exponentially fast decay of correlations]\label{def:EDC}
	We say that the point process $\cP$ has exponentially fast decay of correlations with parameters $a∈\intco{0,1}$ and $\hat a>0$, or $\cP$ satisfies $\EDC(a,\hat{a})$ for short, if there exists a constant $C\ge1$ and a continuous, $\hat{a}$-exponentially fast decaying function $Φ\colon ℝ_+ \to ℝ_+$ such that for all $p∈ℕ\setminus\set{1}$, $\mult x∈(ℝ^d)^p$ and all $\emptyset \ne I \subsetneq \set{1,…,p}$ it holds
	\begin{equation}\label{eq:DecayCorrelation}
		\abs[\Big]{ρ^{(p)}(\mult x) - ρ^{(\abs{I})}(\mult x_I)ρ^{(\abs{I^c})}(\mult x_{I^c})}
		\le C^pp!^aΦ\bigl(\dist(\mult x_I,\mult x_{I^c})\bigr).
	\end{equation}
	Here, $\dist(\mult x_I,\mult x_{I^c}) = \min_{i∈I,j∈I^c} \norm{x_i-x_j}$ denotes the distance between the points $(x_i)_{i∈I}$ and $(x_j)_{j∈I^c}$.
\end{definition}
\begin{remark}
	Having exponentially fast decay of correlations is a measure of being close to independence, as a Poisson point process with intensity $κ$ satisfies $ρ^{(p)}(\mult x) = κ^p$ and hence has exponentially fast decaying correlations with $Φ=0$.
\end{remark}

\subsubsection*{Bound on correlation functions}

\begin{definition}[BC: Bound on correlation functions]\label{def:BC}
	We say that the point process $\cP$ satisfies the bound on correlation functions with parameter $α∈\intco{0,1}$, or $\cP$ satisfies $\BC(α)$ for short, if there exists a constant $C\ge1$ such that for all $p∈ℕ$ it holds
	\begin{equation}\label{eq:BoundCorrelation}
		\sup_{\mult x∈(ℝ^d)^p} ρ^{(p)}(\mult x) \le C^p p!^α.
	\end{equation}
\end{definition}
\begin{lemma}
	For the point process $\cP$, $\EDC(a,\hat{a})$ implies $\BC(α)$ with $α\le a$.
\end{lemma}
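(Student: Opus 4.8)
The plan is to prove the explicit bound $ρ^{(p)}(\mult x) \le \tilde{C}^{p}\,p!^{a}$ for all $p∈ℕ$ and all $\mult x∈(ℝ^d)^p$ by induction on $p$, for a suitably large constant $\tilde{C}\ge 1$; this is precisely $\BC(a)$, which is the asserted $\BC(α)$ with $α=a$ (in particular $α\le a$). Two observations make the induction run. First, since $Φ$ is continuous and $\hat a$-exponentially fast decaying it is bounded, so I may set $M:=\norm{Φ}_∞<∞$; this is what lets me control the error term in \cref{eq:DecayCorrelation} uniformly in the geometry of $\mult x$, even when its components lie arbitrarily close together (where $\dist(\mult x_I,\mult x_{I^c})$ is small and $Φ$ large). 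Second, stationarity forces the one-point correlation function to equal the constant finite intensity $ρ^{(1)}≡κ$, which disposes of the base case $p=1$ as soon as $\tilde{C}\ge κ$.

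For the inductive step, fix $p\ge 2$ and $\mult x∈(ℝ^d)^p$ and assume the bound up to $p-1$. Applying $\EDC(a,\hat a)$ with the splitting $I=\set{1}$, $I^c=\set{2,…,p}$ and bounding $Φ\le M$ gives
\[
	ρ^{(p)}(\mult x)\le ρ^{(1)}(\mult x_I)\,ρ^{(p-1)}(\mult x_{I^c})+C^{p}\,p!^{a}\,M .
\]
Inserting the induction hypothesis $ρ^{(p-1)}(\mult x_{I^c})\le \tilde{C}^{p-1}(p-1)!^{a}$, using $ρ^{(1)}≡κ$ and the trivial inequality $(p-1)!^{a}\le p!^{a}$, I obtain
\[
	ρ^{(p)}(\mult x)\le\bigl(κ\,\tilde{C}^{p-1}+C^{p}M\bigr)\,p!^{a}.
\]
Thus it is enough to choose $\tilde{C}$ so that $κ\,\tilde{C}^{p-1}+C^{p}M\le\tilde{C}^{p}$ for every $p\ge 1$, equivalently $κ/\tilde{C}+M(C/\tilde{C})^{p}\le 1$.

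Taking $\tilde{C}>C$ makes the left-hand side decreasing in $p$, so it suffices to verify the inequality at $p=1$; concretely $\tilde{C}:=\max\set{1,\,C+1,\,κ+MC}$ forces $κ/\tilde{C}+M(C/\tilde{C})^{p}\le(κ+MC)/\tilde{C}\le 1$ for all $p\ge 1$, while also ensuring $\tilde{C}\ge 1$ and $\tilde{C}\ge κ$ for the base case. This closes the induction and establishes $\BC(a)$, hence $\BC(α)$ with $α\le a$. I expect the only genuine subtlety to be the uniformity in $\mult x$: the error term has to be dominated independently of $\dist(\mult x_I,\mult x_{I^c})$, which is exactly what the boundedness of $Φ$ supplies, and beyond that the argument is the bookkeeping needed to pin down a single constant $\tilde{C}$ making the recursion self-consistent across all $p$ at once.
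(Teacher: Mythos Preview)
Your proof is correct and follows essentially the same approach as the paper: peel off one point at a time via $\EDC$ with the splitting $I=\{1\}$, use the boundedness of $Φ$ to control the error term uniformly in $\mult x$, and then bound the resulting recursion. The paper unrolls the recursion into the explicit sum $ρ^{(1)}(0)^p + \sum_{i} C_1 C^i i!^a ρ^{(1)}(0)^{p-i}$ and bounds it directly, whereas you package the same iteration as an induction with a carefully chosen constant $\tilde{C}$; the underlying idea is identical.
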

\begin{proof}
	Since $\cP$ has exponentially fast decaying correlation functions, there exists a continuous, $\hat{a}$-exponentially fast decaying function $Φ$ satisfying \cref{eq:DecayCorrelation}. Clearly, $Φ$ is bounded, by $C_1$ say. Then, by \cref{eq:DecayCorrelation} and stationarity, we obtain
	\begin{equation*}
		ρ^{(p)}(\mult x) \le ρ^{(1)}(0)^p + \sum_{i=1}^p C_1C^i i!^a ρ^{(1)}(0)^{p-i}
		\le C_1 ρ^{(1)}(0)^pC^p p!^ap.\qedhere
	\end{equation*}
\end{proof}

\subsubsection*{Stabilization}

\begin{definition}
	Given a score function $ξ$ and input $x∈\cX∈\cN$, define the radius of stabilization $R^ξ(x,\cX)$ as the smallest radius $r∈ℝ_+$ such that
	\begin{equation*}
		ξ\bigl(x,\cX\cap B_r(x)\bigr) = ξ\bigl(x,\bigl(\cX\cap B_r(x)\bigr)\cup \bigl(\cY\cap B_r(x)^c\bigr)\bigr)
	\end{equation*}
	for all $\cY∈\cN$. If no such $r$ exists, set $R^ξ(x,\cX) = ∞$.
\end{definition}
Notice that $R$ is translation invariant, as $ξ$ is so. The following definition of stabilization requires the radius of stabilization to be small. Therefore, as soon as the points in $\cP_n$ are far away from each other, the summands in \cref{eq:GeomStat} have to be roughly independent.
\begin{definition}[S: Stabilization]\label{def:ST}
	We say that the score function $ξ$ is stabilizing on $\cP$ with parameter $b∈ℝ_+$, or $ξ$ satisfies $\ST(b)$ for short, if $R^ξ$ satisfies the $b$-moment condition
	\begin{equation}\label{eq:Stabilization}
		 \multiadjustlimits{\sup_{1\le n\le ∞}, \sup_{1\le q\le p}, \sup_{\mult x∈W_n^q}} \Exx[\big]{\mult x}{\abs{R(x_1,\cP_n)}^p} \le C^p p!^b
	\end{equation}
	for some constant $C>0$.
\end{definition}
\begin{remark}
	Notice that our definition of stabilization is equivalent to the definition of stabilization in \cite[Definition 1.6]{BYY19GeometricStatistics} with $c=\frac{1}{b}$:
	\begin{equation}\label{eq:Stabilization2}
		\multiadjustlimits{\sup_{1\le n\le ∞}, \sup_{1\le q\le p}, \sup_{\mult x∈W_n^q}} \Prr[\big]{\mult x}{R(x_1,\cP_n)>s} \le C e^{-C's^\frac{1}{b}}
	\end{equation}
	for some constants $C,C'>0$. Indeed, if $R$ satisfies \eqref{eq:Stabilization2}, then
	\begin{equation*}
		\Exx[\big]{\mult x}{\abs{R(x_1,\cP_n)}^p}
		= \int_0^∞ ps^{p-1}\Prr[\big]{\mult x}{R(x_1,\cP_n)>s} \dif s
		\le C(C')^{-bp} Γ(bp+1),
	\end{equation*}
	where $Γ$ denotes the Gamma function and hence $R$ also satisfies \eqref{eq:Stabilization}. On the other hand, if $R$ satisfies \eqref{eq:Stabilization}, then $\Ex{e^{tR^{1/b}}} < ∞$ for $t$ small enough and \eqref{eq:Stabilization2} follows from an exponential Markov inequality.
\end{remark}

\subsubsection*{Moment growth condition}

As we are interested in more precise asymptotic results than a central limit theorem, a bound on the moment growth of the summands is usually necessary.
\begin{definition}[MG: Moment growth]\label{def:MG}
	We say that the score function $ξ$ satisfies the $β$-moment growth condition with $β∈ℝ_+$, or $ξ$ satisfies $\MG(β)$ for short, if there exists a constant $C\ge1$ such that for all $p∈ℕ$
	\begin{equation*}
		\multiadjustlimits{\sup_{1\le n\le ∞}, \sup_{1\le q\le p}, \sup_{\mult x∈W_n^q}} \Exx[\big]{\mult x}{\abs{ξ(x_1,\cP_n)}^p}
		\le C^p p!^β.
	\end{equation*}
\end{definition}
\begin{remark}
	The moment growth condition for $ξ$ is, as already mentioned, not necessary for proving central limit theorems. This is why a similar condition cannot be found in \cite{BYY19GeometricStatistics}. On the other hand, when proving moderate deviations even in the Poisson case, such a condition is usually assumed, compare e.g.\ assumption MGI for $ξ$ in \cite{ERS15MDPForStabilizingFunctionals}.
\end{remark}

\subsubsection*{Power growth condition}

\begin{definition}[PG: Power growth]\label{def:PG}
	We say that the score function $ξ$ satisfies the $(γ_1,γ_2)$-power growth condition with $γ_1,γ_2∈ℝ_+$, or $ξ$ satisfies $\PG(γ_1,γ_2)$ for short, if there exists a constant $C\ge1$ such that for all $\cX∈\cN$, $r>0$, $x∈ℝ^d$ and $k∈ℕ$ it holds
	\begin{equation*}
		\abs[\big]{ξ\bigl(x,\cX\cap B_r(x)\bigr)}\ind_{\cX(B_r(x)) = k}
		\le C\max\set{1,r}^{γ_1}k^{γ_2}.
	\end{equation*}
\end{definition}
\begin{remark}
	The power growth condition in this article is slightly stronger than the one in \cite{BYY19GeometricStatistics}. The results in \cite{BYY19GeometricStatistics} hold under the weaker bound $C^k\max\set{1,r}^k$ instead. Unfortunately, we were not able to obtain the precise bound on cumulants needed under their condition. Nevertheless, almost all typical geometric statistics also satisfy our stronger version.
\end{remark}
Many score functions admit a radius of stabilization which is deterministic. If the investigated point process satisfies exponentially fast decay of correlations, then the power growth condition immediately implies the moment growth condition. A similar argument was also employed in \cite[Section 2.1]{BYY19GeometricStatistics} less explicitly.
\begin{lemma}
	Assume the point process $\cP$ satisfies $\BC(α)$. Moreover, assume that the radius of stabilization of the score function $ξ$ is bounded (i.e.\ $ξ$ satisfies $\ST(0)$). If $ξ$ satisfies $\PG(γ_1,γ_2)$, then $ξ$ also satisfies $\MG(β)$ with $β\le γ_2$.
\end{lemma}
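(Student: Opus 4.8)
The plan is to use $\ST(0)$ to replace $\cP_n$ by its restriction to a fixed ball, $\PG(γ_1,γ_2)$ to bound $ξ$ by a power of the number of points in that ball, and $\BC(α)$ to control the Palm moments of this count. First I would turn $\ST(0)$ into a deterministic bound on the range of dependence: from $\Exx{\mult x}{\abs{R(x_1,\cP_n)}^p}\le C^p$ for all $p$ we get $\Exx{\mult x}{\abs{R(x_1,\cP_n)}^p}^{1/p}\le C$, so letting $p\to∞$ yields $R(x_1,\cP_n)\le R_0:=C$ $\Prr{\mult x}{\cdot}$-almost surely, uniformly in $n$, $q$ and $\mult x$. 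Since enlarging a stabilizing radius preserves stabilization, the defining property of $R$ gives $ξ(x_1,\cP_n)=ξ\bigl(x_1,\cP_n\cap B_{R_0}(x_1)\bigr)$ Palm-almost surely. Writing $K:=\cP_n\bigl(B_{R_0}(x_1)\bigr)$ and applying $\PG(γ_1,γ_2)$ with $r=R_0$, $k=K$, I obtain the pointwise bound $\abs{ξ(x_1,\cP_n)}\le C\max\set{1,R_0}^{γ_1}K^{γ_2}=:MK^{γ_2}$. Raising to the $p$-th power and taking Palm expectation, the claim reduces to $\Exx{\mult x}{K^{pγ_2}}\le C^p p!^{γ_2}$.

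To bound the Palm moments of $K$, set $m:=\lceil pγ_2\rceil$, so that $K^{pγ_2}\le 1+K^m$, and expand into factorial moments, $\Exx{\mult x}{K^m}=\sum_{j=1}^m\stirlingII{m}{j}\Exx{\mult x}{K(K-1)\dotsm(K-j+1)}$. Each factorial moment equals the integral of the $j$-point Palm correlation function of $\cP_n$ over $\bigl(B_{R_0}(x_1)\cap W_n\bigr)^j$; bounding it by $\BC(α)$ — whose use under the Palm measure is the delicate point addressed below — and the volume by $ϑ_dR_0^d=:V$ gives $\Exx{\mult x}{K(K-1)\dotsm(K-j+1)}\le(CV)^jj!^α$. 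Using $\stirlingII{m}{j}\le j^m/j!$ and $j!^{α-1}\le1$ (as $α<1$), the sum is then dominated by $\sum_{j\le m}j^m(CV)^j\le\tilde C^m m^m\le\hat C^m m!$, so $\Exx{\mult x}{K^m}\le\hat C^m m!$. Finally, the elementary estimate $\lceil pγ_2\rceil!\le C_{γ_2}^p(p!)^{γ_2}$, valid for every fixed $γ_2>0$, converts this into $\Exx{\mult x}{K^{pγ_2}}\le C^p p!^{γ_2}$, i.e.\ $\MG(γ_2)$, and hence $\MG(β)$ with $β\le γ_2$ (the case $γ_2=0$ being trivial, since then $ξ$ is bounded).

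The step I expect to be the real obstacle is making the count estimate uniform in the conditioning $\mult x\in W_n^q$. Under the Palm measure at $\mult x$ the relevant $j$-point correlation function is the reduced one, $ρ^{(q+j)}(\mult x,\cdot)/ρ^{(q)}(\mult x)$, so $\BC(α)$ controls directly only the numerator, and one must ensure that the normalisation $ρ^{(q)}(\mult x)$ cannot spoil the bound $(CV)^jj!^α$ — equivalently, that conditioning on the remaining points $x_2,\dots,x_q$ does not inflate the number of points near $x_1$. For $q=1$ this is immediate, since stationarity forces $ρ^{(1)}\equiv κ$ constant; the genuinely delicate part is $q\ge2$, which is needed because in the cumulant expansion one conditions simultaneously on all points carrying a score, and it is here that the interplay with $\BC(α)$ must be exploited with care.
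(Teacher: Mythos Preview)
Your approach matches the paper's almost step by step: bound the radius deterministically, apply $\PG$ to reduce to moments of the ball count $K=\cP_n(B_r(x_1))$, expand $K^{\lceil pγ_2\rceil}$ into factorial moments via Stirling numbers, and control the resulting sum. Your estimates after that point (the Stirling bound $\stirlingII{m}{j}\le j^m/j!$, the sum $\sum_{j\le m} j^m(CV)^j\le \tilde C^m m^m$, and $\lceil pγ_2\rceil!\le C_{γ_2}^p(p!)^{γ_2}$) are all correct and parallel the paper's use of $\stirlingII{m}{j}\le j^{m-j}$ together with its Touchard-type summation lemma.

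The obstacle you isolate---that under the Palm measure the relevant $j$-point density is $ρ^{(q+j)}(\mult x,\cdot)/ρ^{(q)}(\mult x)$, so $\BC(α)$ bounds only the numerator and the normalisation $ρ^{(q)}(\mult x)$ could in principle spoil uniformity in $\mult x$---is precisely the step the paper handles differently. Rather than going through the reduced Palm correlation functions at all, the paper sidesteps the division by $ρ^{(q)}(\mult x)$ in one line,
\[
\Exx[\big]{\mult x}{\cP(B_r(x_1))^{pγ_2}} \;\le\; \Ex[\big]{\bigl(\cP(B_r(x_1))+p\bigr)^{pγ_2}},
\]
justified only as ``standard calculus for point processes''. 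After this comparison, the expansion involves exclusively the \emph{unconditioned} correlation functions, where $\BC(α)$ applies directly and your remaining computation goes through verbatim. So the missing ingredient in your write-up is exactly this inequality comparing the Palm count to the unconditioned count shifted by~$p$; once you accept it, your argument and the paper's coincide. You may want to examine for yourself what property of the Palm distribution underlies this bound under the stated hypotheses, since the paper does not elaborate.
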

\begin{proof}
	As the radius of stabilization of $ξ$ is bounded, there exists some constant $r>0$ such that $R^ξ \le r$. Let $p,n∈ℕ$ and consider $\mult x∈W_n^q$ for some $q∈\set{1,…,p}$. Then the power growth condition implies the existence of a constant $C_1>0$ (independent of $p$, $q$, $\mult x$ and $n$) such that
	\begin{align*}
		\Exx[\Big]{\mult x}{\abs{ξ(x_1,\cP_n)}^p}
		&= \sum_{k=0}^∞ \Exx[\Big]{\mult x}{\abs[\big]{ξ\bigl(x_1,\cP_n\cap B_r(x_1)\bigr)}^p \ind_{\cP_n(B_r(x_1))=k}}\\
		&\le C_1^p \max\set{1,r}^{pγ_1} \sum_{k=0}^∞ \Exx[\big]{\mult x}{k^{pγ_2}\ind_{\cP_n(B_r(x_1))=k}}\\
		&\le C_1^p \max\set{1,r}^{pγ_1} \Exx[\big]{\mult x}{\cP(B_r(x_1))^{pγ_2}}.
	\end{align*}
	Denote the constant in the bound on correlation functions \eqref{eq:BoundCorrelation} by $C_2$. By using standard calculus for point processes as introduced in the beginning of \cref{sec:Assumptions}, conclude
	\begin{align*}
		\Exx[\big]{\mult x}{\cP(B_r(x_1))^{pγ_2}}
		&\le \Ex[\big]{(\cP(B_r(x_1))+p)^{pγ_2}}\\
		&\le 2^{pγ_2} \Ex[\big]{\cP(B_r(x_1))^{pγ_2}} + (2p)^{pγ_2}\\
		&\le 2^{pγ_2} \sum_{k=0}^{\ceil{pγ_2}} \stirlingII{\ceil{pγ_2}}{k} \int_{B_r(x_1)^k} ρ_α^{(k)}(\mult y) \dif \mult y + (2p)^{pγ_2}\\
		&\le 2^{pγ_2} C_2^{\ceil{pγ_2}} \bigl(\Vol B_r(x_1)\bigr)^{\ceil{pγ_2}} \sum_{k=0}^{\ceil{pγ_2}} \stirlingII{\ceil{pγ_2}}{k} k!^α + (2p)^{pγ_2}.
	\end{align*}
	Apply the standard bound $\stirlingII{n}{k}\le k^{n-k}$ together with $n^n \ge n!$ and \cref{lem:TouchardSum} to obtain
	\begin{equation*}
		\sum_{k=0}^{\ceil{pγ_2}} \stirlingII{\ceil{pγ_2}}{k} k!^α
		\le \sum_{k=0}^{\ceil{pγ_2}} \frac{k!^α}{k!}k^{\ceil{pγ_2}}
		\le C_3^p (pγ_2+2)!
	\end{equation*}
	for some large constant $C_3$ depending on $α$ only. Finally, combine all three bounds and apply $p^p\le 3^p*p!$ to conclude that $ξ$ satisfies the $β$-moment growth condition with some $β\le γ_2$.
\end{proof}


\subsection{Main results}\label{sec:Results}

Recall that we are considering the $ξ$-weighted point measure evaluated at a bounded, measurable test function $f\colon ℝ^d \to ℝ$ given by
\begin{equation*}
	μ_n^ξ(f) = \sum_{x∈\cP_n} ξ\bigl(x,\cP_n\bigr)f\bigl(xn^{-\frac{1}{d}}\bigr).
\end{equation*}
By $\Normaldist_{0,1}$, let us denote a standard Gaussian distributed random variable, i.e.\ $\Pr{\Normaldist_{0,1} \le t} = (2π)^{-1} \int_{-∞}^t \exp\bigl(-\frac{s^2}{2}\bigr) \dif s$, $t∈ℝ$. Further, denote the variance by
\begin{equation*}
	\bigl(σ_n^ξ(f)\bigr)^2 = \Var[\big]{μ_n^ξ(f)}
\end{equation*}
and its limit by
\begin{equation*}
	σ^2(ξ) = \Exx[\big]{0}{ξ^2(0,\cP)} ρ^{(1)}(0) + \int_{ℝ^d} \bigl(m_{1,1}(0,x) - m_1(0)^2\bigr) \dif x
\end{equation*}
with
\begin{align*}
	m_1(x) &= \Exx[\big]{x}{ξ(x,\cP)}ρ^{(1)}(x),\\
	m_{1,1}(x,y) &= \Exx[\big]{(x,y)}{ξ(x,\cP)ξ(y,\cP)}ρ^{(1)}(x)ρ^{(1)}(y).
\end{align*}
Before presenting our main results, let us recall the mean and variance asymptotic from \cite[Theorem 1.12]{BYY19GeometricStatistics}. Even though the original theorem works under slightly weaker assumptions, we omit presenting them here in full generality for the sake of a better presentation.
\begin{theorem}[Mean and variance asymptotic, {\cite{BYY19GeometricStatistics}}]\label{thm:MeanVarianceAsymptotic}
	Let $f\colon ℝ^d\to ℝ$ be a bounded function. Assume that $\cP$ satisfies $\EDC(a,\hat{a})$ and that $ξ$ satisfies $\ST(b)$, $\MG(β)$ and $\PG(γ_1,γ_2)$. Then,
	\begin{equation*}
		\frac{\Ex[\big]{μ_n^ξ(f)}}{n} \xrightarrow[n→∞]{} \Exx{0}{ξ(0,\cP)}ρ^{(1)}(0)\int_{W_1} f(x) \dif x
	\end{equation*}
	and
	\begin{equation*}
		\frac{\Var[\big]{μ_n^ξ(f)}}{n} \xrightarrow[n→∞]{} σ^2(ξ) \int_{W_1} f(x)^2 \dif x.
	\end{equation*}
\end{theorem}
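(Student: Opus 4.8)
The plan is to evaluate the first and second factorial moments of $μ_n^ξ(f)$ through the refined Campbell-Mecke formula \cref{eq:refinedCampbell}, to identify the pointwise Palm limits by combining stationarity with the stabilization property \cref{def:ST}, and to pass to the limit by dominated convergence. The domination is supplied by the moment growth condition \cref{def:MG} together with the exponentially fast decay of correlations \cref{def:EDC}, the latter guaranteeing that the truncated two-point function is integrable in the separation variable.

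For the mean, the case $p=1$ of \cref{eq:refinedCampbell} together with the substitution $y = xn^{-1/d}$ gives
\begin{equation*}
	\frac{\Ex[\big]{μ_n^ξ(f)}}{n} = \int_{W_1} m_1^{(n)}\bigl(yn^{1/d}\bigr) f(y) \dif y, \qquad m_1^{(n)}(x) = \Exx[\big]{x}{ξ(x,\cP_n)}ρ^{(1)}(x).
\end{equation*}
By stationarity the Palm measure $\Prr{x}{\cdot}$ is the translate of $\Prr{0}{\cdot}$, so translation invariance of $ξ$ rewrites $\Exx{x}{ξ(x,\cP_n)}$ as $\Exx{0}{ξ(0,\cP\cap(W_n-x))}$. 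For $y$ in the interior of $W_1$ the box $W_n - yn^{1/d}$ grows to fill $ℝ^d$ around the origin, so as soon as it contains $B_{R(0,\cP)}(0)$ the defining property of the stabilization radius forces $ξ(0,\cP\cap(W_n-x)) = ξ(0,\cP)$; since $R(0,\cP)<∞$ almost surely, this yields $m_1^{(n)}(yn^{1/d}) \to m_1(0) = \Exx{0}{ξ(0,\cP)}ρ^{(1)}(0)$ for almost every $y$. The $\MG(β)$ bound with $p=1$ dominates the integrand uniformly in $n$ and $y$, and dominated convergence gives the stated mean asymptotic.

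For the variance I would expand the second moment into its diagonal and off-diagonal parts via the cases $p=1$ and $p=2$ of \cref{eq:refinedCampbell}, and subtract the square of the mean — itself equal to $\int_{W_n^2} m_1^{(n)}(x)m_1^{(n)}(y)f_n(x)f_n(y)\dif x\dif y$ — so that the off-diagonal second-moment term combines with it into the connected two-point function. Writing $f_n(x) = f(xn^{-1/d})$, this produces
\begin{equation*}
	\Var[\big]{μ_n^ξ(f)} = \int_{W_n} m_2^{(n)}(x) f_n(x)^2 \dif x + \int_{W_n^2} g^{(n)}(x,y) f_n(x) f_n(y) \dif x \dif y,
\end{equation*}
where $m_2^{(n)}(x) = \Exx{x}{ξ(x,\cP_n)^2}ρ^{(1)}(x)$ and $g^{(n)}(x,y) = m_{1,1}^{(n)}(x,y) - m_1^{(n)}(x)m_1^{(n)}(y)$. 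After dividing by $n$, the diagonal term is handled exactly as the mean with $ξ^2$ in place of $ξ$ and converges to $\Exx{0}{ξ^2(0,\cP)}ρ^{(1)}(0)\int_{W_1} f^2$. For the off-diagonal term I would pass to the relative coordinate $z = y-x$; stationarity and stabilization give $g^{(n)}(x,x+z) \to m_{1,1}(0,z) - m_1(0)^2$ for bulk $x$, while $f_n(x+z) \to f_n(x)$ after integration since $zn^{-1/d}\to 0$, so that this term tends to $\bigl(\int_{ℝ^d}(m_{1,1}(0,z)-m_1(0)^2)\dif z\bigr)\int_{W_1} f^2$; together with the diagonal contribution this is precisely $σ^2(ξ)\int_{W_1} f^2$.

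The crux, and the main obstacle, is a uniform clustering bound $\abs{g^{(n)}(x,x+z)} \le h(\norm{z})$ for some $h$ with $\int_{ℝ^d} h(\norm{z})\dif z < ∞$, valid uniformly in $n$ and $x$; it is needed both to make the limiting integral finite and to justify the dominated convergence in $z$. This is where the two hypotheses interact: setting $r = \norm{z}/2$ and conditioning on $\set{R(x,\cP_n)\le r}\cap\set{R(y,\cP_n)\le r}$ localizes the two scores to the disjoint balls $B_r(x)$ and $B_r(y)$, whereupon $\EDC(a,\hat a)$ lets one factorize both the pair correlation $ρ^{(2)}(x,y)$ and the localized joint Palm expectation into their one-point counterparts, up to an error governed by $Φ(\norm{z})$; the complementary event has Palm probability decaying like $e^{-c r^{1/b}}$ by \cref{def:ST}, and its contribution to $g^{(n)}$ is controlled by Hölder's inequality using the fourth-moment bound from $\MG(β)$ under the two-point Palm measure, which is again superpolynomially small in $\norm{z}$. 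The resulting $h$ is integrable, so dominated convergence applies. Finally, the boundary layer where $x$ lies within the stabilization range of $\partial W_n$ contributes only $O(n^{(d-1)/d}) = o(n)$ and drops out after division by $n$, and any irregularity of $f$ is absorbed by approximating $f$ in $L^2(W_1)$ by continuous functions, again using $zn^{-1/d}\to 0$.
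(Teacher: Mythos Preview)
The paper does not prove this statement itself; it is quoted from \cite[Theorem~1.12]{BYY19GeometricStatistics}. Your outline is close in spirit to that argument, and the mean and diagonal variance parts are fine. The gap lies in the off-diagonal clustering bound.

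You write that once the two scores are localized to the disjoint balls $B_r(x)$ and $B_r(y)$, ``$\EDC(a,\hat a)$ lets one factorize both the pair correlation $ρ^{(2)}(x,y)$ and the localized joint Palm expectation into their one-point counterparts''. The factorization of $ρ^{(2)}$ is indeed immediate from $\EDC$, but the factorization of the localized joint Palm expectation is not: $\EDC$ is a hypothesis on the correlation functions $ρ^{(p)}$ of the underlying process $\cP$, not on expectations of functionals under the two-point Palm measure $\Prr{(x,y)}{\cdot}$. What you actually need is that the restrictions $\cP\cap B_r(x)$ and $\cP\cap B_r(y)$ are approximately independent under $\Prr{(x,y)}{\cdot}$, and this does not follow from $\EDC$ by itself. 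The bridge is the factorial moment expansion (recorded in this paper as \cref{lem:FMEexpansion}), which rewrites the truncated Palm product $\tilde m_{(1,1)}(x,y;n)$ as a series in the correlation functions of $\cP$; $\EDC$ can then be applied term by term, and the power growth condition $\PG(γ_1,γ_2)$ --- which your sketch never invokes --- is precisely what controls the coefficients of that series so that it converges with the right error, as in \cref{lem:BoundMTilde}. The resulting two-point clustering bound is the $p=2$, $\mult k=(1,1)$ case of \cref{thm:FastDecayXiWeightedMeasure}, and it supplies the integrable envelope $h$ you are after. Without this device, or some equivalent mixing statement tailored to Palm distributions, your factorization step is unjustified.
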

Note that in general $σ^2(ξ) \int_{W_1} f(x)^2 \dif x$ might be vanishing, e.g.\ typically for statistics from random matrix theory. In such cases, the geometric statistic is degenerate and our results do not apply. Proving a lower bound on the growth of the variance is difficult in general and is mostly treated as a separate problem in the literature on geometric statistics. This is why we will always assume the correct volume-order variance growth and not focus on variance lower bounds here and in the examples provided. For further discussion of this issue and some results on how to obtain a variance lower bound for Poisson input, consult \cite[848]{BYY19GeometricStatistics}, \cite[Section 1.4]{ERS15MDPForStabilizingFunctionals} and \cite[Theorem 2.2]{PW08MultivariateNormalApproximation} and the references therein. Techniques to establish volume-order growth of the variance for certain Gibbsian processes can be found in \cite{XY15NormalApproximationGibbsianInput}.

We now present the main results of this article. Let us start with the following Berry-Esseen estimate:
\begin{theorem}[Central limit theorem with Berry-Esseen bound]\label{thm:BerryEsseen}
	Let $f\colon ℝ^d\to ℝ$ be a bounded function. Assume that the point process $\cP$ satisfies $\EDC(a,\hat{a})$ and that the score function $ξ$ satisfies $\ST(b)$, $\MG(β)$ and $\PG(γ_1,γ_2)$. Moreover, assume that $σ^2(ξ)\int_{W_1} f(x)^2 \dif x > 0$. Then, there exists a constant $C>0$ such that
	\begin{equation*}
		\sup_{s∈ℝ} \abs[\Big]{\Pr[\Big]{μ_n^ξ(f) - \Ex[\big]{μ_n^ξ(f)} \le sσ_n^ξ(f)} - \Pr[\Big]{\Normaldist_{0,1}\le s}}
		\le C n^{-\frac{1}{2+4γ}}
	\end{equation*}
	for all $n∈ℕ$ with
	\begin{equation*}
		γ=
		\begin{cases}
			1 + \max\set{γ_2,β} + \frac{d}{(1-a)\hat{a}} + \frac{bd^2}{(1-a)\hat{a}} & \text{if }\frac{(1-a)\hat{a}}{d} \le 1,\\
			1 + \max\set{γ_2,β} + \frac{d}{\hat{a}} + a + bd & \text{if }\frac{(1-a)\hat{a}}{d} \ge 1.
		\end{cases}
	\end{equation*}
\end{theorem}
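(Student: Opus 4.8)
The plan is to run the cumulant method of the Lithuanian school: I would deduce the Berry--Esseen estimate from a factorial-power bound on the cumulants of $μ_n^ξ(f)$ together with a general smoothing lemma. Writing $c_p(\cdot)$ for the $p$-th cumulant, the engine of the proof is the estimate
\begin{equation*}
	\abs[\big]{c_p\bigl(μ_n^ξ(f)\bigr)} \le C^p (p!)^{1+γ}\, n \qquad \text{for all } p\ge 2,
\end{equation*}
with $γ$ given by the two-case formula in the statement; this is precisely the content of \cref{thm:BoundCumulants}, which I take as established. Granting it, the remainder is a normalization followed by an appeal to a standard Berry--Esseen inequality.

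First I would normalize. Set
\begin{equation*}
	Y_n = \frac{μ_n^ξ(f) - \Ex{μ_n^ξ(f)}}{σ_n^ξ(f)},
\end{equation*}
so that $Y_n$ has mean $0$ and variance $1$. Since cumulants of order at least $2$ are invariant under recentering and scale by the $p$-th power of the scaling factor, $c_p(Y_n) = c_p\bigl(μ_n^ξ(f)\bigr)\bigl(σ_n^ξ(f)\bigr)^{-p}$. The nondegeneracy hypothesis $σ^2(ξ)\int_{W_1} f(x)^2 \dif x > 0$ combined with the variance asymptotic of \cref{thm:MeanVarianceAsymptotic} provides a constant $c_0 > 0$ and an index $n_0$ with $\bigl(σ_n^ξ(f)\bigr)^2 \ge c_0\, n$ for every $n\ge n_0$. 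Feeding this into the cumulant bound yields, for $p\ge 3$ and $n\ge n_0$,
\begin{equation*}
	\abs[\big]{c_p(Y_n)} \le \frac{C^p (p!)^{1+γ}\, n}{(c_0\, n)^{p/2}} = (p!)^{1+γ}\, \tilde C^{2}\bigl(\tilde C\, n^{-1/2}\bigr)^{p-2}, \qquad \tilde C = C\, c_0^{-1/2},
\end{equation*}
which is exactly the Statulevičius condition $(S_γ)$ with scale parameter $\Delta_n \asymp n^{1/2}$ (the harmless prefactor $\tilde C^2$ only shifts the final multiplicative constant).

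Then I would invoke the Berry--Esseen inequality of Saulis and Statulevičius \cite{SS91LimitTheorems}: a centered random variable of unit variance satisfying $(S_γ)$ with parameter $\Delta$ has distribution function within $c\,\Delta^{-1/(1+2γ)}$ of the standard normal one in Kolmogorov distance. With $\Delta = \Delta_n \asymp n^{1/2}$ this gives
\begin{equation*}
	\sup_{s∈ℝ} \abs[\Big]{\Pr{Y_n \le s} - \Pr{\Normaldist_{0,1} \le s}} \le c\, \Delta_n^{-\frac{1}{1+2γ}} \asymp n^{-\frac{1}{2(1+2γ)}} = n^{-\frac{1}{2+4γ}},
\end{equation*}
which is the asserted bound; the finitely many indices $n < n_0$ are absorbed into the constant $C$, since the left-hand side never exceeds $1$.

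The genuine difficulty lies entirely upstream, in \cref{thm:BoundCumulants}, rather than in the present deduction: there one expands $c_p\bigl(μ_n^ξ(f)\bigr)$ via the Palm/factorial-moment representation of the mixed moments $m_{\mult k}$, reorganizes the contributions according to set partitions (isolating a leading extensive term of order $n$), and controls the remainder by combining the clustering afforded by $\EDC(a,\hat a)$ with the score estimates $\MG(β)$, $\PG(γ_1,γ_2)$ and the stabilization $\ST(b)$; the two-case formula for $γ$ is what emerges from optimizing the interplay between $\hat a$, $a$, $d$ and $b$, and it passes unchanged into the exponent $1/(2+4γ)$ after normalization. Within the present argument itself the only delicate point is the variance lower bound: the whole scheme collapses if $σ_n^ξ(f)$ fails to grow like $\sqrt n$, which is exactly why the hypothesis $σ^2(ξ)\int_{W_1} f(x)^2 \dif x > 0$ is indispensable.
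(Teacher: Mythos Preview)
Your proof is correct and follows essentially the same approach as the paper: take the cumulant bound from \cref{thm:BoundCumulants}, normalize by $σ_n^ξ(f)$, use the variance asymptotic of \cref{thm:MeanVarianceAsymptotic} together with the nondegeneracy hypothesis to get $σ_n^ξ(f)\gtrsim\sqrt n$, and then invoke the Saulis--Statulevi\v{c}ius Berry--Esseen inequality (stated in the paper as \cref{thm:ConsequencesBoundCumulants}) with $\Delta_n\asymp\sqrt n$. Your explicit treatment of the finitely many indices $n<n_0$ by absorbing them into the constant is a nice touch that the paper leaves implicit.
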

\begin{remark}
	As already mentioned, we assume in addition that the variance is of volume order. The central limit theorem presented in \cite[Theorem 1.14]{BYY19GeometricStatistics}, in contrast, also applies for variances growing at least like $n^ε$ for some $ε>0$ but does not yield bounds on the speed of convergence.
\end{remark}
We now turn to the following concentration inequality:
\begin{theorem}[Concentration inequality]\label{thm:Concentration}
	Let $f\colon ℝ^d\to ℝ$ be a bounded function. Assume that the point process $\cP$ satisfies $\EDC(a,\hat{a})$ and that the score function $ξ$ satisfies $\ST(b)$, $\MG(β)$ and $\PG(γ_1,γ_2)$. Moreover, assume that $σ^2(ξ)\int_{W_1} f(x)^2 \dif x > 0$. Then, there exists a constant $C>0$ such that
	\begin{equation*}
		\Pr[\Big]{\abs[\Big]{μ_n^ξ(f)-\Ex{μ_n^ξ(f)}} \ge sσ_n^ξ(f)} \le 2 \exp\Bigl(-\frac{1}{4}\min\set[\Big]{\frac{s^2}{2^{1+γ}}, C(ns^2)^\frac{1}{2+4γ}}\Bigr)
	\end{equation*}
	for all $n∈ℕ$ and $s∈ℝ_+$ with
	\begin{equation*}
		γ=
		\begin{cases}
			1 + \max\set{γ_2,β} + \frac{d}{(1-a)\hat{a}} + \frac{bd^2}{(1-a)\hat{a}} & \text{if }\frac{(1-a)\hat{a}}{d} \le 1,\\
			1 + \max\set{γ_2,β} + \frac{d}{\hat{a}} + a + bd & \text{if }\frac{(1-a)\hat{a}}{d} \ge 1.
		\end{cases}
	\end{equation*}
\end{theorem}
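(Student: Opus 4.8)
The plan is to reduce the concentration inequality to a purely analytic statement about cumulants and then invoke a ready-made lemma from the Saulis--Statulevičius theory \cite{SS91LimitTheorems}, in exactly the same spirit as the Berry--Esseen estimate of \cref{thm:BerryEsseen}. Writing
\[
	Y_n = \frac{μ_n^ξ(f) - \Ex[\big]{μ_n^ξ(f)}}{σ_n^ξ(f)},
\]
so that $Y_n$ has mean zero and variance one, the claimed bound is just a two-sided tail estimate $\Pr[\big]{\abs{Y_n} \ge s} \le 2\exp(\dotsm)$. Since cumulants of order $p\ge2$ are shift-invariant and homogeneous of degree $p$, the $p$-th cumulant of $Y_n$ equals $\Gamma_p(μ_n^ξ(f))/(σ_n^ξ(f))^p$, and this is the quantity I must control.

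First I would bring the cumulant bound into the standard Statulevičius form. \Cref{thm:BoundCumulants} supplies a constant $C$ together with the exponent $γ$ appearing in the statement (with its piecewise value) such that $\abs{\Gamma_p(μ_n^ξ(f))} \le C^p (p!)^{1+γ} n$ for all $p\ge1$ and $n∈ℕ$. Under the standing hypothesis $σ^2(ξ)\int_{W_1} f(x)^2 \dif x > 0$, the variance asymptotic of \cref{thm:MeanVarianceAsymptotic} gives $(σ_n^ξ(f))^2 \ge c\,n$ for some $c>0$ and all large $n$, the finitely many remaining $n$ being absorbed into the final constant. Dividing, I obtain
\[
	\abs{\Gamma_p(Y_n)} \le (p!)^{1+γ} \frac{C^p}{c^{p/2}} n^{1-p/2} = (p!)^{1+γ} \frac{K^2}{\Delta_n^{\,p-2}}, \qquad \Delta_n = \frac{\sqrt{n}}{K},
\]
with $K = C/\sqrt{c}$; that is, $Y_n$ satisfies the Statulevičius condition $(S_γ)$ with width parameter $\Delta_n$ of order $\sqrt{n}$, the harmless factor $K^2$ being absorbed by shrinking $\Delta_n$.

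Then I would feed this into the Saulis--Statulevičius concentration lemma \cite{SS91LimitTheorems}, which asserts that any mean-zero, variance-one variable satisfying $(S_γ)$ with parameter $\Delta$ obeys $\Pr[\big]{\abs{Y}\ge s} \le 2\exp\bigl(-\tfrac14\min\set{s^2/2^{1+γ},\,(\Delta s)^{1/(1+2γ)}}\bigr)$. With $\Delta_n \asymp \sqrt{n}$ the heavy-tail term becomes $(\Delta_n s)^{1/(1+2γ)} \asymp (n s^2)^{1/(2+4γ)}$, matching the exponent $2+4γ = 2(1+2γ)$ and producing the constant $C$ (the factor $K^{-1/(1+2γ)}$), while the Gaussian term reproduces $s^2/2^{1+γ}$ verbatim. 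This gives the inequality for all $s\ge0$ and all $n$, which is the assertion.

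The \emph{genuine} difficulty is entirely concentrated in \cref{thm:BoundCumulants}: establishing the super-factorial growth $(p!)^{1+γ}n$ with the correct, stabilization- and decay-dependent exponent $γ$ is the technical heart of the paper, whereas the present argument is routine bookkeeping. The only points that still require a little care are (i) extracting a uniform variance lower bound of order $n$ from the convergence in \cref{thm:MeanVarianceAsymptotic} together with the positivity hypothesis, and (ii) checking that a single $γ$ drives both the Gaussian term $s^2/2^{1+γ}$ and the heavy-tail term, which is automatic once the cumulant estimate is cast in the form $(S_γ)$.
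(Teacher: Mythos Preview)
Your proposal is correct and follows essentially the same route as the paper: normalize, feed the cumulant bound of \cref{thm:BoundCumulants} together with the variance lower bound coming from \cref{thm:MeanVarianceAsymptotic} into the Statulevičius condition, and then apply the ready-made concentration inequality from \cref{thm:ConsequencesBoundCumulants} (the Saulis--Statulevičius lemma) with $Δ_n$ of order $\sqrt{n}$. The paper carries out exactly these steps in a joint proof of \cref{thm:BerryEsseen,thm:Concentration,thm:MDP}, so there is nothing to add.
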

Next, let us state our moderate deviation principle. Before doing so, we briefly recall the notion of moderate deviations for convenience (refer to \cite[Section 3.7]{DZ10LargeDeviations} for instance).
\begin{definition}
	A sequence $(X_n)_{n∈ℕ}$ of random variables satisfies a large deviation principle with speed $(a_n)_{n∈ℕ}$ and (good) rate function $I\colon ℝ\to \intcc{0,∞}$ if $I$ is lower semi-continuous and has compact level sets and if for every Borel set $B\subseteq ℝ$ it holds
	\begin{equation*}
		-\inf_{s∈\operatorname{int}(B)} I(s)
		\le \liminf_{n→∞} \frac{1}{a_n} \log \Pr{X_n∈B}
		\le \limsup_{n→∞} \frac{1}{a_n} \log \Pr{X_n∈B}
		\le -\inf_{s∈\cl(B)} I(s).
	\end{equation*}
	We speak of a moderate deviation principle instead of of a large deviation principle if the scaling of the random variables $(X_n)_{n∈ℕ}$ is between the one of an ordinary law of large numbers and the central limit theorem.
\end{definition}
\begin{theorem}[Moderate deviation principle]\label{thm:MDP}
	Let $f\colon ℝ^d\to ℝ$ be a bounded function. Assume that the point process $\cP$ satisfies $\EDC(a,\hat{a})$ and that the score function $ξ$ satisfies $\ST(b)$, $\MG(β)$ and $\PG(γ_1,γ_2)$. Moreover, assume that $σ^2(ξ)\int_{W_1} f(x)^2 \dif x > 0$. Then, for any sequence $(a_n)_{n∈ℕ}$ of real numbers with $\lim_{n→∞} a_n = ∞$ and $\lim_{n→∞} a_nn^{-\frac{1}{2+4γ}} = 0$, the sequence $\Bigl(\frac{μ_n^ξ(f) - \Ex{μ_n^ξ(f)}}{a_nσ_n^ξ(f)}\Bigr)_{n∈ℕ}$ satisfies a moderate deviation principle on $ℝ$ with speed $a_n^2$ and Gaussian rate function $I(s)=\frac{s^2}{2}$, where
	\begin{equation*}
		γ=
		\begin{cases}
			1 + \max\set{γ_2,β} + \frac{d}{(1-a)\hat{a}} + \frac{bd^2}{(1-a)\hat{a}} & \text{if }\frac{(1-a)\hat{a}}{d} \le 1,\\
			1 + \max\set{γ_2,β} + \frac{d}{\hat{a}} + a + bd & \text{if }\frac{(1-a)\hat{a}}{d} \ge 1.
		\end{cases}
	\end{equation*}
\end{theorem}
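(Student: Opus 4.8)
The plan is to read the moderate deviation principle directly off the cumulant bound of \cref{thm:BoundCumulants}, in the same way \cref{thm:BerryEsseen,thm:Concentration} are obtained, by feeding that estimate into the general limit theorems of the Lithuanian school \cite{SS91LimitTheorems}. Throughout, write $c_p(X)$ for the $p$-th cumulant of a random variable $X$ and set
\[
	Y_n = \frac{μ_n^ξ(f) - \Ex{μ_n^ξ(f)}}{σ_n^ξ(f)}.
\]
Since cumulants are translation invariant for $p\ge2$ and homogeneous of degree $p$, the centred and normalized variable $Y_n$ satisfies $c_1(Y_n)=0$, $c_2(Y_n)=1$ and $c_p(Y_n) = c_p\bigl(μ_n^ξ(f)\bigr)/\bigl(σ_n^ξ(f)\bigr)^p$ for every $p\ge2$.

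The first step is a volume-order lower bound on the variance. Because we assume $σ^2(ξ)\int_{W_1} f(x)^2 \dif x>0$, the variance asymptotic of \cref{thm:MeanVarianceAsymptotic} provides a constant $c>0$ and an index $n_0$ with $\bigl(σ_n^ξ(f)\bigr)^2 \ge cn$ for all $n\ge n_0$; in particular $Y_n$ is well defined for large $n$. Feeding this into the cumulant estimate of \cref{thm:BoundCumulants}, which bounds $\abs{c_p(μ_n^ξ(f))}$ by $C^p p!^{1+γ} n$, yields for $p\ge3$ and $n\ge n_0$
\[
	\abs{c_p(Y_n)} \le \frac{C^p p!^{1+γ} n}{(cn)^{p/2}}
	= p!^{1+γ}\, \tilde C^{\,p}\, n^{-\frac{p-2}{2}}, \qquad \tilde C = C c^{-1/2}.
\]
Writing $\tilde C^{\,p} n^{-(p-2)/2} = \tilde C^{2}\bigl(n^{1/2}/\tilde C\bigr)^{-(p-2)}$ and absorbing the $p$-independent factor $\tilde C^{2}$ into a rescaling of the scale parameter, this is exactly the Saulis--Statulevičius cumulant condition
\[
	\abs{c_p(Y_n)} \le \frac{p!^{1+γ}}{Δ_n^{\,p-2}}, \qquad p\ge3,
\]
with $Δ_n = c' n^{1/2}$ for a suitable constant $c'>0$ and with the exponent $γ$ exactly as in the statement. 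The crucial feature is that one and the same $γ$ and the scale $Δ_n \asymp n^{1/2}$ drive all four limit theorems.

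With the cumulant condition in hand, I would invoke the moderate-deviation theorem of \cite{SS91LimitTheorems}: under this condition, for every sequence $(a_n)$ with $a_n\to∞$ and $a_n Δ_n^{-1/(1+2γ)}\to0$, the sequence $(Y_n/a_n)$ satisfies a moderate deviation principle on $ℝ$ with speed $a_n^2$ and rate function $I(s)=\frac{s^2}{2}$. It remains to match the admissible range. Since $Δ_n=c'n^{1/2}$ and $2(1+2γ)=2+4γ$, we have
\[
	a_n Δ_n^{-\frac{1}{1+2γ}} = (c')^{-\frac{1}{1+2γ}}\, a_n\, n^{-\frac{1}{2+4γ}} \xrightarrow[n→∞]{} 0
\]
precisely by the hypothesis $a_n n^{-1/(2+4γ)}\to0$. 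As $Y_n/a_n = \bigl(μ_n^ξ(f)-\Ex{μ_n^ξ(f)}\bigr)/\bigl(a_n σ_n^ξ(f)\bigr)$ is exactly the sequence appearing in the statement, the theorem follows.

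The genuine work in this argument lies entirely in \cref{thm:BoundCumulants}; within the present deduction the only delicate point is the bookkeeping of the parameter $γ$. One must verify that the exponent $1+γ$ produced by the cumulant bound is the same $γ$ controlling the admissible deviation window $a_n=o\bigl(n^{1/(2+4γ)}\bigr)$, and that the spurious geometric factor $\tilde C^{\,p}$ is absorbed into $Δ_n$ without degrading the power $1/(1+2γ)$. This matching, rather than any hard estimate, is where care is required so that the Lithuanian-school hypotheses are met verbatim.
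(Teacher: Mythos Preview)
Your proposal is correct and follows essentially the same route as the paper: normalize, use the variance asymptotic from \cref{thm:MeanVarianceAsymptotic} for a volume-order lower bound on $σ_n^ξ(f)$, plug in the cumulant estimate of \cref{thm:BoundCumulants} to obtain the Saulis--Statulevičius condition with $Δ_n\asymp\sqrt{n}$, and then invoke the abstract moderate deviation principle. The only cosmetic difference is that the paper packages the last step into \cref{thm:ConsequencesBoundCumulants} (citing both \cite{DE13MDPViaCumulants} and \cite{SS91LimitTheorems}), whereas you appeal to \cite{SS91LimitTheorems} directly; the handling of the constant $\tilde C^{\,2}$ via the choice of $Δ_n$ is done in the paper by taking $Δ_n=\sqrt{n}/(C\max\{1,C^2\})$, which is exactly your ``absorbing'' step made explicit.
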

\begin{remark}
	In contrast to the moderate deviation principle presented in \cite[Theorem 1.4]{ERS15MDPForStabilizingFunctionals}, we are able to treat the more general input class of point processes with exponentially fast decaying correlations. Their result in turn also covers non-stationary Poisson input. For stationary Poisson point processes, we actually recover the whole regime of their moderate deviation principle under the additional assumption of the score functions satisfying the power growth condition.
\end{remark}
\begin{remark}
	In order to improve our results with our method, one would need to improve the bound on cumulants presented in \cref{thm:BoundCumulants} to decrease the value of $γ$. The optimal bound $C^k k!$ and thus $γ=0$ seems (at least for now) out of reach, though, as already in the case of Poisson input in \cite{ERS15MDPForStabilizingFunctionals} such a bound was not achieved.
\end{remark}
\begin{remark}
	It turned out that there is an error in the proof for the bound on cumulants presented in \cite{ERS15MDPForStabilizingFunctionals}. In Lemma 3.4, the quantity $Q(k,κ,ψ)$ should be $2^{k-1} k!^d \int_0^∞ (1+e\norm{κ}_∞ ω t^d)^{k-1} \dif(-ψ)(t)$ instead of $2^{k-1} k!^d \int_0^∞ (1+e\norm{κ}_∞ ω t^d)^{k-1} \dif(-ψ)(t)$ when using the method of proof presented there. This would result in a bound on cumulants $\abs{\innerp{f^{\otimes k}}{c_λ^k}} \le λ C^k \norm{f}_∞^k k!^{1+γ}$ with $γ=d+α+βd$ instead of $γ=1+α+βd$ and hence would yield deviation results only on a smaller scale. This issue can be overcome by bounding the integral more carefully via the coarea formula as presented here in \cref{lem:Coarea,lem:VolBound}.
\end{remark}
Concluding, let us state our Marcinkiewicz-Zygmund-type strong law of large numbers. Notice that for $ε\ge \frac{1}{2}$ the statement is a consequence of the standard strong law of large numbers; the case $ε<\frac{1}{2}$ is not covered by it, though. The case $ε=0$ represents the scaling of the central limit theorem. Hence, our strong law of large numbers bridges the scaling of the usual strong law and the one of the central limit theorem.
\begin{theorem}[Marcinkiewicz-Zygmund-type strong law of large numbers]\label{thm:SLLN}
	Consider a bounded function $f\colon ℝ^d\to ℝ$. Assume that the point process $\cP$ satisfies $\EDC(a,\hat{a})$ and that the score function $ξ$ satisfies $\ST(b)$, $\MG(β)$ and $\PG(γ_1,γ_2)$. Moreover, assume that $σ^2(ξ)\int_{W_1} f(x)^2 \dif x > 0$. Then, for any $ε>0$ it holds that
	\begin{equation*}
		\frac{μ_n^ξ(f) - \Ex[\big]{μ_n^ξ(f)}}{\sqrt{n}^{1+ε}} \xrightarrow[n→∞]{} 0
	\end{equation*}
	almost surely.
\end{theorem}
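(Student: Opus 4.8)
The plan is to derive the almost sure convergence from the concentration inequality of \cref{thm:Concentration} by a Borel--Cantelli argument. Write $S_n = \mu_n^\xi(f) - \Ex{\mu_n^\xi(f)}$ for the centered statistic and fix $\delta > 0$. Since the limiting variance $\sigma^2(\xi)\int_{W_1} f(x)^2 \dif x$ is finite and, by assumption, strictly positive, \cref{thm:MeanVarianceAsymptotic} provides a constant $C_0 \ge 1$ with $0 < \sigma_n^\xi(f) \le C_0 \sqrt{n}$ for all sufficiently large $n$. The crucial rewriting is then
\[
	\set[\big]{\abs{S_n} \ge \delta\, n^{(1+\varepsilon)/2}} = \set[\big]{\abs{S_n} \ge s_n\, \sigma_n^\xi(f)}, \qquad s_n := \frac{\delta\, n^{(1+\varepsilon)/2}}{\sigma_n^\xi(f)},
\]
and the upper bound on the variance forces $s_n \ge (\delta/C_0)\, n^{\varepsilon/2}$, whence $s_n^2 \ge (\delta/C_0)^2 n^\varepsilon$ and $n s_n^2 \ge (\delta/C_0)^2 n^{1+\varepsilon}$.

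Feeding $s = s_n$ into \cref{thm:Concentration}, both arguments of the minimum therein are bounded below by positive powers of $n$: the first term behaves like $s_n^2 \ge (\delta/C_0)^2 n^\varepsilon$ and the second like $(n s_n^2)^{1/(2+4\gamma)} \ge ((\delta/C_0)^2)^{1/(2+4\gamma)} n^{(1+\varepsilon)/(2+4\gamma)}$. Setting $\eta = \min\set{\varepsilon, (1+\varepsilon)/(2+4\gamma)} > 0$, there is therefore a constant $c > 0$ (depending on $\delta$) with
\[
	\Pr[\big]{\abs{S_n} \ge \delta\, n^{(1+\varepsilon)/2}} \le 2\exp\bigl(-c\, n^\eta\bigr)
\]
for all large $n$. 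Since $\eta > 0$, the series $\sum_{n} \exp(-c\, n^\eta)$ converges, so these probabilities are summable.

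By the Borel--Cantelli lemma it follows that almost surely $\abs{S_n} < \delta\, n^{(1+\varepsilon)/2}$ for all but finitely many $n$, that is, $\limsup_{n\to\infty} \abs{S_n} / n^{(1+\varepsilon)/2} \le \delta$ almost surely. Intersecting these full-measure events along a sequence $\delta_m \downarrow 0$ yields $S_n / n^{(1+\varepsilon)/2} \to 0$ almost surely, which is the assertion since $n^{(1+\varepsilon)/2} = \sqrt{n}^{\,1+\varepsilon}$.

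I expect no serious obstacle once \cref{thm:Concentration} is available; the argument is essentially routine. The one point deserving care is verifying that \emph{both} terms in the minimum of the concentration bound contribute a genuine positive power of $n$ to the exponent, which is precisely what the volume-order upper bound $\sigma_n^\xi(f) \le C_0 \sqrt{n}$ guarantees, by forcing $s_n$ to grow at least like $n^{\varepsilon/2}$. Note that for $\varepsilon \ge 1/2$ the statement also follows from the classical strong law, but the concentration-based argument above covers all $\varepsilon > 0$ uniformly.
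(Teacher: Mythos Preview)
Your proposal is correct and follows essentially the same approach as the paper: apply the concentration inequality of \cref{thm:Concentration}, use the variance asymptotic to bound $\sigma_n^\xi(f)\le C_0\sqrt{n}$, verify that both terms inside the minimum contribute a positive power of $n$ to the exponent, and conclude by Borel--Cantelli. Your write-up is in fact slightly more careful than the paper's in making the passage from a fixed $\delta$ to the almost sure limit explicit via a countable intersection over $\delta_m\downarrow 0$.
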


Let us conclude this section with a short overview of the methods of proof. Our results crucially depend on an explicit bound on cumulants for the test statistic $μ_n^ξ(f)$ presented in \cref{thm:BoundCumulants}. From that, the Berry-Esseen bound (\cref{thm:BerryEsseen}), the concentration inequality (\cref{thm:Concentration}) and the moderate deviation principle (\cref{thm:MDP}) follow by the well-known work of Saulis and Statulevičius (\cite{SS91LimitTheorems}). To achieve the bound on cumulants we basically use a factorial moment expansion developed in \cite{B95FactorialMomentExpansion,BMS97ExpansionForFunctionals} to obtain a Taylor series like expansion for the moments of our statistic in terms of correlation functions. This expansion allows us to prove fast decay of correlations for the correlation functions of the $ξ$-weighted measure $μ_n^ξ$ in \cref{thm:FastDecayXiWeightedMeasure}. Finally, we apply a clustering result for cumulants (\cref{lem:ClusteringCumu}) together with a coarea formula to obtain the bound on cumulants presented.
The idea of the proof is based on \cite{BYY19GeometricStatistics} where the authors showed that the $k$-th cumulant of $μ_n^ξ(f)$ grows linearly in $n$ and concluded a central limit theorem from this bound. To obtain a central limit theorem, no control on the growth in $k$ is needed though and in Remark (xi) following Theorem 1.14 the authors pose the open problem under what conditions a good control in $k$ can be achieved. This question is answered in our article. The main difficulty in doing so is to translate the fast decay of correlations for the $ξ$-weighted measure into a bound on the factorial cumulant measure without loosing track of the growth in $k$. This is mainly achieved by using the more refined clustering lemma together with a coarea formula instead of the elementary approach in \cite{BYY19GeometricStatistics}. Moreover, when proving fast decay of correlations for the $ξ$-weighted measure, the cut-off $t$ in \cref{lem:BoundM} has to be chosen in a more refined way to balance the contribution from the non-Poissonian part and the part with bounded radius of stabilization better.

A similar cumulant-based approach to obtain fine asymptotic results was also employed in \cite{BESY08MDPGeometricProbability,ERS15MDPForStabilizingFunctionals} for stabilizing functionals of Poisson point processes and in \cite{GT18GaussianPolytopes} for the study of Gaussian polytopes, for instance.

\subsection{Extension to marked point processes}\label{sec:MarkedInput}

Our \cref{thm:BerryEsseen,thm:Concentration,thm:MDP,thm:SLLN} extend to input point processes with independent marks as already known from the case of Poisson point processes (refer e.g.\ to \cite{BY05GaussianLimitsGeometricProbability,ERS15MDPForStabilizingFunctionals}). To present the key arguments more clearly, we only discuss the results for marked point processes here and refrain from working with this extended version throughout the whole article. Nevertheless, all the main tools used in the proofs work in exactly the same way, so that the key bound on cumulants in \cref{thm:BoundCumulants} and thus all conclusions extend to this more general framework. Nevertheless, marked point processes are especially useful in the study of geometric statistics, as they allow applications to time-dependent models like the spacial birth-growth model or random packing. For an application to the latter see \cref{sec:Examples}.

Let $(\cM,\cF_\cM,\bP_\cM)$ be a probability space of marks. Given a point process $\cP$, we define the (independently) marked point process as the subset $\breve{\cP} = \set{(x,m) \given x∈\cP,m∈\cM}$ of $ℝ^d\times \cM$ with distribution given by the product law of $\cP$ and $\bP_\cM$. We call $\cP$ the underlying point process of the marked point process $\breve{\cP}$. Alternatively, we might think of $\breve{\cP}$ as the collection of pairs $(X_i,T_i)_{i∈I}$ where $(X_i)_{i∈I}$ denotes the collection of random points of the point process $\cP$ and $(T_i)_{i∈I}$ is a collection of independent $\bP_\cM$-distributed random variables which is also independent from $\cP$. For a more in-depth introduction to marked point processes we refer the reader to \cite[Section 6.4]{DV03IntroductionPointProcessesI} and further to \cite[278--279]{DV08IntroductionPointProcessesII} for marked Palm theory. By convention, we denote marked objects by a breve accent, i.e.\ $\breve{x} = (x,m) ∈ \breve{ℝ}^d = ℝ^d \times \cM$. When using $x$ and $\breve{x}$ in the same context, $x$ should refer to the projection of $\breve{x}$ onto the space coordinate.

Given a marked point process, consider the associated $ξ$-weighted, marked random measure
\begin{equation*}
	μ_n^ξ = \sum_{\breve{x}∈\breve{\cP}_n} ξ\bigl(\breve{x},\breve{\cP}_n\bigr) δ_{xn^{-1/d}}
\end{equation*}
for some score function $ξ\colon \breve{ℝ}^d \times \cN \to ℝ$. Let us now briefly discuss the changes necessary in our assumptions. Translation invariance, the bound on the correlation function (\cref{def:BC}) and exponentially fast decay of correlation functions (\cref{def:EDC}) should hold for the underlying point process $\cP$. The concept of stabilization needs to be extended slightly in the following way: Define the radius of stabilization $R^ξ(\breve{x},\breve{\cP})$ as the smallest radius $r∈ℝ_+$ such that
\begin{equation*}
	ξ\Bigl(\breve{x},\breve{\cX}\cap (B_r(x)\times \cM)\Bigr)
	= ξ\Bigl(\breve{x},\bigl(\breve{\cX}\cap (B_r(x)\times\cM)\bigr)\cup \bigl(\breve{\cY}\cap (B_r(x)^c\times\cM)\bigr)\Bigr)
\end{equation*}
for all marked point processes $\breve{\cY}$. The moment condition in the definition of stabilization (\cref{def:ST}) and in the definition of the moment growth condition (\cref{def:MG}) should now be uniformly over marked points $\mult{\breve{x}}$. Similarly, we assume that the power growth condition (\cref{def:PG}) holds uniform over the markings, i.e.\ there exists a constant $C\ge1$ such that for all marked point sets $\breve{\cX}$, $r>0$, $\breve{x}∈ℝ^d\times\cM$ and $k∈ℕ$ it holds
\begin{equation*}
	\abs[\big]{ξ\bigl(\breve{x},\breve{\cX}\cap (B_r(x)\times\cM)\bigr)}\ind_{\breve{\cX}(B_r(x)\times\cM) = k}
	\le C\max\set{1,r}^{γ_1}k^{γ_2}.
\end{equation*}
Under these extended assumptions all tools used in this article, in particular the Palm approach, the clustering lemma and the factorial moment expansion, work in exactly the same way as presented in \cref{sec:Proofs}. Hence, we regain the same bound on cumulants as in the unmarked case and thus also \cref{thm:BerryEsseen,thm:Concentration,thm:MDP,thm:SLLN}.


\section{Examples and applications}\label{sec:ExamplesApplications}

\subsection{Examples of point processes with exponentially fast decay of correlations}

\subsubsection*{Superposition of independent point processes with exponentially fast decay of correlations}

Given a tuple of independent point processes, their union is again a point process, called the superposition. It turns out that the superposition of point processes with exponentially fast decay of correlations again has exponentially fast decaying correlations. In the case of independent and identically distributed point processes this is stated and proven in \cite[Proposition 2.3]{BYY19GeometricStatistics}. Their proof extends to the case of independent but not necessarily identically distributed point processes.
\begin{proposition}\label{thm:Superposition}
	Let $k∈ℕ$ and $\cP_1,…,\cP_k$ be independent point processes, all satisfying $\EDC(a_i,\hat{a}_i)$ and $\BC(α_i)$, for $i=1,…,k$ respectively. Then, the superposition $\cup_{i=1}^k \cP_i$ satisfies $\EDC(a,\hat{a})$ with parameters $a=\max\set{a_i+\sum_{j\ne i} α_j \given i=1,…,k}$ and $\hat{a}=\min\set{\hat{a}_i \given i=1,…,k}$ as well.
\end{proposition}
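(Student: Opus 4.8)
The plan is to make the correlation functions of the superposition explicit in terms of those of the components and then reduce the desired estimate to the exponentially fast decay of each individual process by a telescoping argument. Writing $ρ_i$ for the correlation functions of $\cP_i$ (with the convention $ρ_i^{(0)}=1$) and using independence of the $\cP_i$ together with the factorization of the factorial moment measure of a superposition, the $p$-point correlation function of $\cP=\cup_{i=1}^k\cP_i$ at distinct $\mult x∈(ℝ^d)^p$ is obtained by summing over all ways of attributing each of the $p$ points to one of the $k$ processes:
\begin{equation*}
	ρ^{(p)}(\mult x) = \sum_{φ\colon \set{1,…,p}→\set{1,…,k}} \prod_{i=1}^k ρ_i^{(\abs{φ^{-1}(i)})}\bigl(\mult x_{φ^{-1}(i)}\bigr).
\end{equation*}
One checks this for $p=1,2$ directly and in general from the Campbell-Mecke formula \cref{eq:Campbell} applied to the union. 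The key observation is that the product $ρ^{(\abs I)}(\mult x_I)ρ^{(\abs{I^c})}(\mult x_{I^c})$ admits the same index set: grouping the independent sums over $φ|_I$ and $φ|_{I^c}$ into a single sum over $φ$ gives
\begin{equation*}
	ρ^{(\abs I)}(\mult x_I)ρ^{(\abs{I^c})}(\mult x_{I^c}) = \sum_{φ} \prod_{i=1}^k ρ_i^{(\abs{φ^{-1}(i)\cap I})}\bigl(\mult x_{φ^{-1}(i)\cap I}\bigr) ρ_i^{(\abs{φ^{-1}(i)\cap I^c})}\bigl(\mult x_{φ^{-1}(i)\cap I^c}\bigr).
\end{equation*}

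Subtracting term by term, for each fixed $φ$ I would bound the difference of the two products over $i$ by the standard telescoping estimate $\abs{\prod u_i-\prod v_i}\le\sum_i\abs{u_i-v_i}\prod_{j\ne i}\max\set{\abs{u_j},\abs{v_j}}$. Here $u_i-v_i$ vanishes unless the block $A_i:=φ^{-1}(i)$ meets both $I$ and $I^c$, in which case $\EDC(a_i,\hat a_i)$ for $\cP_i$ gives $\abs{u_i-v_i}\le C_i^{\abs{A_i}}\abs{A_i}!^{a_i}Φ_i(\dist(\mult x_{A_i\cap I},\mult x_{A_i\cap I^c}))$, while $\BC(α_j)$ bounds each remaining factor by $C_j^{\abs{A_j}}\abs{A_j}!^{α_j}$. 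Since $A_i\cap I\subseteq I$ and $A_i\cap I^c\subseteq I^c$ we have $\dist(\mult x_{A_i\cap I},\mult x_{A_i\cap I^c})\ge\dist(\mult x_I,\mult x_{I^c})$, so replacing each $Φ_i$ by its continuous, nonincreasing exponential majorant $s\mapsto C_ie^{-c_is^{\hat a_i}}$ lets me insert $\dist(\mult x_I,\mult x_{I^c})$ uniformly.

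It then remains to collect constants and factorials. Using $\sum_l\abs{A_l}=p$ gives $\prod_l C_l^{\abs{A_l}}\le C_{\max}^p$, and the per-factor bound $\abs{A_l}!\le p!$ turns $\abs{A_i}!^{a_i}\prod_{j\ne i}\abs{A_j}!^{α_j}$ into $p!^{a_i+\sum_{j\ne i}α_j}\le p!^{a}$ with $a=\max_i\set{a_i+\sum_{j\ne i}α_j}$ — this is exactly where the asserted value of $a$ enters. Summing over the at most $k$ split indices $i$ and over the $k^p$ maps $φ$ produces a prefactor $(kC_{\max})^p$ and the decay function $Φ:=\sum_{i=1}^k Φ_i$. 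A finite sum of $\hat a_i$-exponentially fast decaying continuous functions is continuous and $\hat a$-exponentially fast decaying with $\hat a=\min_i\hat a_i$, so setting $C:=kC_{\max}\ge 1$ yields \cref{eq:DecayCorrelation} with the claimed parameters.

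The conceptual content lies entirely in the first identity; the only step requiring genuine care is the bookkeeping in the last paragraph, namely arranging the telescoping and the crude factorial bounds so that the exponents $a_i$ and $α_j$ recombine into precisely $a=\max_i\set{a_i+\sum_{j\ne i}α_j}$ rather than something larger, while keeping the base constant independent of $p$. A minor technical point is the replacement of the possibly non-monotone $Φ_i$ by exponential majorants, which makes the passage from $\dist(\mult x_{A_i\cap I},\mult x_{A_i\cap I^c})$ to $\dist(\mult x_I,\mult x_{I^c})$ legitimate.
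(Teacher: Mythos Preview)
Your argument is correct and is exactly the standard route: express the superposition correlations as a sum over assignments $φ$, subtract term by term, telescope across the $k$ factors, apply $\EDC(a_i,\hat a_i)$ to the split factor and $\BC(α_j)$ to the others, and collect the crude bounds $\abs{A_l}!\le p!$ to recover the exponent $a=\max_i\{a_i+\sum_{j\ne i}α_j\}$. The paper does not spell out a proof here but defers to the i.i.d.\ case in \cite[Proposition~1.8]{BYY19GeometricStatisticsSupp}, whose argument is the same in substance; your version simply keeps track of the possibly different parameters $(a_i,\hat a_i,α_i)$, which is precisely the bookkeeping the paper alludes to.
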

\begin{proof}
	The proof works in exactly the same way as the corresponding one in the case of independent and identically distributed processes presented in \cite[Proposition 1.8]{BYY19GeometricStatisticsSupp}.
\end{proof}

\subsubsection*{\texorpdfstring{$α$}{Alpha}-determinantal point processes}

As the leading example of a point process $\cP$ with exponentially fast decay of correlations we consider the class of stationary determinantal point processes. If the kernel $\cK$ of a determinantal point process satisfies $\cK(x,y)\le Φ(\dist(x,y))$ with some continuous, $\hat{a}$-exponentially fast decaying function $Φ$, then $\cP$ satisfies exponentially fast decay of correlations with parameters $a=0$ and $\hat{a}$. In particular, the bound on the correlation function follows by \cref{thm:DPPDecay} with $α=0$ as well.
\begin{example}
	Probably the most classical determinantal point processes is the (infinite, complex) Ginibre point process with kernel $\cK(w,z) = \exp\bigl(\bar{w}z - \frac{\abs{z}^2}{2} - \frac{\abs{w}}{2}\bigr) \le \exp\bigl(-\frac{\abs{z-w}^2}{2}\bigr)$, $w,z∈\bC$ with respect to the complex Lebesgue measure. Hence we can choose $\hat{a}=2$ and $a=0$.
\end{example}

Our results also apply to the more general class of $α$-determinantal point processes. The $α$ here should not be confused with the parameter $α$ in the bound on correlation functions. The processes discussed here turn out to satisfy the $0$-bound on correlation functions. To define $α$-determinantal point processes consider the $α$-determinant $\det_α(A)$ of an $n\times n$-matrix $A=(a_{i,j})_{i,j}$ first introduced by Vere-Jones in \cite{V88GeneralizationPermanentsDeterminants,V97AlphaPermanents} (back then still in a slightly different form) and given by
\begin{equation*}
	\det\nolimits_α(A) = \sum_{τ∈\cS_n} α^{n-ν(τ)} \prod_{i=1}^n a_{i,τ(i)},
\end{equation*}
where $\cS_n$ denotes the symmetric group on $\set{1,…,n}$ and  $ν(τ)$ denotes the number of cycles of a permutation $τ∈\cS_n$. Notice that for $α=-1$ we obtain the standard determinant and for $α=1$ the so-called permanent. Given any Hermitian, positive semi-definite, locally square integrable kernel $\cK\colon Λ\times Λ \to \bC$ on some locally compact Polish space $Λ$, one can define for any $-\frac{1}{α}∈ℕ$ a point process $\cP$ with $p$-point correlation functions $ρ_α^{(p)}$ given by
\begin{equation*}
	ρ_α^{(p)}(x_1,…,x_p) = \det\nolimits_α \bigl(\cK(x_i,x_j)\bigr)_{1\le i,j\le n},\qquad x_1,…,x_p∈Λ.
\end{equation*}
Such a point process is called $α$-determinantal point process. In case $α=-1$, one refers to the corresponding process as a determinantal point process. Further, the case $α=0$ corresponds to the Poisson point process.
\begin{remark}
	One can define a point process in case of $\frac{1}{α}∈ℕ$ as well. These processes are called $α$-permanental point processes or permanental point process in case $α=1$. Unfortunately, we cannot deal with such processes within our framework, as the decay of correlations has parameters of typical order $a=\frac{1}{α}\ge1$, e.g.\ $ρ_1^{(p+1)}(x_1,x_2,…,x_2)-ρ_1^{(1)}(x_1)ρ_1^{(p)}(x_2,…,x_2) = p*p!*\cK(x_1,x_2)^2$, which is growing too fast in $p$. 
\end{remark}
Over the course of the rest of the article, we always focus on the case $-\frac{1}{α}∈ℕ$. It turns out that
\begin{align*}
	ρ_α^{(p)}(x_1,…,x_p) &\le \prod_{i=1}^p ρ_α^{(1)}(x_i)
\end{align*}
for $α<0$, i.e.\ the points of an $α$-determinantal point process repel each other. Actually, one can define $α$-determinantal point processes for more values of $α$. For further information we refer the reader to \cite{HKPV09ZerosOfGAF,M16ExistenceAlphaDeterminantalProcesses,ST03RandomPointFieldsI}. One last result we need for $α$-determinantal point processes is the following decomposition result, which can be found in \cite[Section 4.10]{HKPV09ZerosOfGAF}:
\begin{proposition}\label{thm:AlphaDPP}
	Any $α$-determinantal point process $\cP_α$ for some $-\frac{1}{α}∈ℕ$ with kernel $\cK$ is the superposition of $-\frac{1}{\abs{α}}$ independent and identically distributed copies of a determinantal point process with kernel $\abs{α}\cK$.
\end{proposition}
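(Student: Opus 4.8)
The plan is to show the two point processes coincide in law by proving that they have the same system of $p$-point correlation functions for every $p$, and then invoking the fact that a simple point process is determined by its correlations as soon as these grow slowly enough. The requisite growth bound is available for free here: for $α<0$ the excerpt records $ρ_α^{(p)}(\mult x)\le\prod_i ρ_α^{(1)}(x_i)$, so the correlations are bounded by $\norm{\cK}_∞^{\,p}$ and the associated moment problem is determinate. Write $m=-\tfrac1α=\tfrac1{\abs α}∈ℕ$ for the number of claimed copies, so that $\abs{α}\cK=\tfrac1m\cK$ is the kernel of each determinantal copy $\cP^{(1)},…,\cP^{(m)}$; on $p$ points its correlation function is $\det\bigl(\tfrac1m\cK(x_i,x_j)\bigr)_{i,j}=m^{-p}\det\bigl(\cK(x_i,x_j)\bigr)_{i,j}$. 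Throughout set $A=\bigl(\cK(x_i,x_j)\bigr)_{1\le i,j\le p}$ and, for $S\subseteq\set{1,…,p}$, let $A[S]$ be the corresponding principal submatrix, with the convention $\det(A[\emptyset])=1$.

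First I would record the correlation function of a superposition of independent processes. Since each point of the union $\bigcup_{j=1}^m\cP^{(j)}$ belongs to exactly one constituent, the $p$-point correlation function is obtained by summing, over all assignments of the indices $\set{1,…,p}$ to the $m$ processes, the product of the respective correlation densities. In the present iid case this gives
\begin{equation*}
	ρ^{\cup,(p)}(\mult x)=\sum_{σ\colon\set{1,…,p}\to\set{1,…,m}} \prod_{j=1}^m \det\bigl(\tfrac1m A[σ^{-1}(j)]\bigr)= m^{-p}\sum_{σ} \prod_{j=1}^m\det\bigl(A[σ^{-1}(j)]\bigr),
\end{equation*}
where $σ^{-1}(j)$ is the (possibly empty) block of indices routed to process $j$ and I used $\sum_j\abs{σ^{-1}(j)}=p$.

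The heart of the argument is the purely algebraic identity $m^p\det\nolimits_{-1/m}(A)=\sum_{σ}\prod_{j=1}^m\det\bigl(A[σ^{-1}(j)]\bigr)$, which I would establish by a cycle-counting expansion. Expanding each block determinant as a signed sum over permutations of its block and gluing the block-permutations into a single $τ∈\cS_p$ produces exactly those $τ$ whose every cycle is contained in one block of $σ$; conversely, for a fixed $τ$ the compatible assignments $σ$ are precisely the ones constant on each cycle of $τ$, of which there are $m^{ν(τ)}$. Using $\operatorname{sgn}(τ)=(-1)^{p-ν(τ)}$, the right-hand side collapses to
\begin{equation*}
	\sum_{τ∈\cS_p}(-1)^{p-ν(τ)}m^{ν(τ)}\prod_i A_{i,τ(i)}=m^p\sum_{τ∈\cS_p}\Bigl(-\tfrac1m\Bigr)^{p-ν(τ)}\prod_i A_{i,τ(i)}=m^p\det\nolimits_{-1/m}(A),
\end{equation*}
matching the definition of the $α$-determinant with $α=-1/m$. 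Combining this with the superposition formula yields $ρ^{\cup,(p)}(\mult x)=\det\nolimits_α\bigl(\cK(x_i,x_j)\bigr)_{i,j}=ρ_α^{(p)}(\mult x)$ for all $p$.

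I expect the main obstacle to be the combinatorial bookkeeping in this cycle-counting step, namely matching the sign $(-1)^{p-ν(τ)}$ together with the multiplicity $m^{ν(τ)}$ against the powers $α^{\,p-ν(τ)}$ appearing in $\det\nolimits_α$; the remaining two ingredients — the superposition formula for correlation functions and the determinacy of the correlation moment problem for $α<0$ — are routine. Once the two families of correlation functions are shown to agree, uniqueness of the associated simple point process identifies $\cP_α$ with the superposition and completes the proof.
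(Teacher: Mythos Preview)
The paper does not actually prove this proposition: it is quoted directly from \cite{HKPV09ZerosOfGAF}, Section~4.10, with no argument given. Your proposal is therefore not competing against any in-paper proof.

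That said, your argument is correct and is essentially the standard verification. The superposition formula for correlation functions of independent processes is right (and holds because independent processes with densities a.s.\ share no points), and the cycle-counting identity
\[
\sum_{σ\colon[p]\to[m]}\ \prod_{j=1}^m \det\bigl(A[σ^{-1}(j)]\bigr)
=\sum_{τ∈\cS_p} m^{ν(τ)}(-1)^{p-ν(τ)}\prod_i A_{i,τ(i)}
= m^p\det\nolimits_{-1/m}(A)
\]
is exactly the reason the parameter values $-1/α∈ℕ$ are distinguished. One small caveat on the uniqueness step: the bound $ρ_α^{(p)}\le\prod_i ρ_α^{(1)}(x_i)=\prod_i\cK(x_i,x_i)$ gives determinacy of the moment problem only once $\cK(x,x)$ is locally bounded (so that factorial moments over bounded sets grow at most like $C^p$); this is implicit in the paper's setting and certainly holds in all the applications, but you may want to state it explicitly rather than writing $\norm{\cK}_∞^{\,p}$, since the proposition as stated only assumes $\cK$ is locally square integrable.
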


We now show that $α$-determinantal point processes satisfy exponentially fast decay of correlations.
\begin{proposition}\label{thm:DPPDecay}
	Let $\cP$ be a determinantal point process. Assume that its associated kernel is Hermitian, positive semi-definite, locally square integrable and exponentially fast decaying in the sense that $\cK(x,y) \le Φ(\dist(x,y))$ for all $x,y$ and some $\hat{a}$-exponentially fast decaying function $Φ$. Then, $\cP$ has exponentially fast decay of correlations with parameters $a=0$ and $\hat{a}$.
\end{proposition}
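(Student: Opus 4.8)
The goal is to verify the decay-of-correlations bound \cref{eq:DecayCorrelation} for a determinantal point process whose kernel $\cK$ is dominated by an $\hat{a}$-exponentially fast decaying function $Φ$. Recall that for a determinantal point process the $p$-point correlation functions are given by $ρ^{(p)}(\mult x) = \det(\cK(x_i,x_j))_{1\le i,j\le p}$. The plan is to fix $p$, a vector $\mult x ∈ (ℝ^d)^p$ and a nonempty proper subset $I \subsetneq \set{1,…,p}$, and to estimate the difference $\abs{ρ^{(p)}(\mult x) - ρ^{(\abs I)}(\mult x_I)ρ^{(\abs{I^c})}(\mult x_{I^c})}$ directly by comparing the full Gram-type determinant against the product of the two block determinants.

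I would reorganize the index set so that $I = \set{1,…,q}$ with $q = \abs I$, and write the $p \times p$ matrix $M = (\cK(x_i,x_j))$ in block form with diagonal blocks $A = (\cK(x_i,x_j))_{i,j\le q}$ and $D = (\cK(x_i,x_j))_{i,j > q}$ and off-diagonal block $B = (\cK(x_i,x_j))_{i\le q < j}$ (and $B^*$ or its transpose below). Then $ρ^{(\abs I)}(\mult x_I)ρ^{(\abs{I^c})}(\mult x_{I^c}) = \det A \cdot \det D$ is exactly the determinant of the block-diagonal matrix $M_0 = \operatorname{diag}(A,D)$, obtained from $M$ by zeroing out the cross blocks. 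Crucially, every entry of the cross block $B$ is an evaluation $\cK(x_i,x_j)$ with $i∈I$, $j∈I^c$, hence is bounded in modulus by $Φ(\norm{x_i-x_j}) \le Φ(\dist(\mult x_I,\mult x_{I^c}))$ since $Φ$ is decreasing (or can be taken so). The remaining work is to show that replacing the cross blocks by zero changes the determinant by at most $C^p p! \cdot Φ(\dist(\mult x_I,\mult x_{I^c}))$; that is where the bound must come from, with $α = 0$ reflecting the $p!$ rather than $p!^a$ growth.

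For this I would use the multilinearity of the determinant together with a telescoping or interpolation argument: write $\det M - \det M_0$ as a sum over the columns (or rows) being switched on one at a time from their zeroed state to their true value, so that each term in the telescoped sum contains at least one factor coming from a cross-block entry, hence at least one factor bounded by $Φ(\dist(\mult x_I,\mult x_{I^c}))$. Each such determinant is then bounded by Hadamard's inequality, using that every entry of $\cK$ is bounded (the kernel is continuous and $Φ$ bounded, so $\abs{\cK(x,y)} \le \Norm{\cK}_∞ =: C_1$); Hadamard gives a factor at most $C_1^p p^{p/2}$, and $p^{p/2}$ is absorbed into $C^p p!$ via $p^p \ge p!$ and Stirling. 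Summing the telescoped terms (there are at most $2^p$ or polynomially many, again absorbable) yields the claimed bound $C^p p!\, Φ(\dist(\mult x_I,\mult x_{I^c}))$, giving $\EDC(0,\hat a)$.

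The main obstacle is getting the telescoping clean enough that a factor of $Φ(\dist(\mult x_I,\mult x_{I^c}))$ can be \emph{factored out} of every surviving term rather than merely appearing somewhere inside: one must ensure each intermediate matrix in the telescoping still satisfies a Hadamard bound uniform in the column being switched, and that no term in the expansion of $\det M_0$ survives unmatched. A cleaner route I would try first is the Leibniz expansion $\det M = \sum_{τ} \operatorname{sgn}(τ) \prod_i \cK(x_i,x_{τ(i)})$: the permutations $τ$ that stabilize $I$ (i.e.\ $τ(I) = I$) reproduce exactly $\det A \cdot \det D$, so the difference is the sum over permutations $τ$ that move at least one index across $I$ and $I^c$; every such $τ$ necessarily maps some $i ∈ I$ to some $τ(i) ∈ I^c$, contributing a factor $\cK(x_i,x_{τ(i)})$ bounded by $Φ(\dist(\mult x_I,\mult x_{I^c}))$. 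Bounding the remaining $p-1$ factors by $C_1$ and counting at most $p!$ such permutations immediately gives $p!\, C_1^{p-1} Φ(\dist(\mult x_I,\mult x_{I^c}))$, which is the desired bound with $α = 0$; this avoids Hadamard altogether and is the argument I would ultimately present.
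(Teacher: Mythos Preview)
Your Leibniz-expansion argument is correct as far as it goes, but it does not prove the proposition as stated. You arrive at the bound $p!\, C_1^{p-1}\, Φ(\dist(\mult x_I,\mult x_{I^c}))$ and declare this to be the desired inequality with $a=0$; it is not. Comparing with the definition of $\EDC(a,\hat a)$ in \cref{def:EDC}, the right-hand side there is $C^p p!^a\, Φ(\cdot)$, so a bound of the form $C^p p!\,Φ(\cdot)$ corresponds to $a=1$, not $a=0$. The Hadamard route you sketch first is slightly better (it produces $p^{p/2}\sim C^p p!^{1/2}$, hence $a=\tfrac12$, which is exactly the older bound the paper's remark attributes to \cite{BYY19GeometricStatistics}), but neither argument reaches $a=0$. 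This is not cosmetic: the whole machinery of the paper requires $a<1$ strictly, and the improvement to $a=0$ is precisely the content of the proposition (see the remark following it).

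The gap is that the Leibniz expansion throws away all cancellation and all spectral information; for a generic matrix you really cannot do better than $p!$ times the entry bound. What the paper exploits, and what your argument ignores, is that the Gram matrix $K=(\cK(x_i,x_j))$ is Hermitian positive semi-definite. The paper's proof goes through a continuity estimate for the determinant in Schatten $1$-norm (\cref{lem:ContinuityDet}):
\[
\abs{\det K - \det K_0} \le \norm{K-K_0}_{S_1}\, e^{\norm{K}_{S_1}+\norm{K_0}_{S_1}},
\]
with $K_0$ the block-diagonal part. For Hermitian PSD matrices $\norm{K}_{S_1}=\tr K\le p\norm{\cK}_∞$, so the exponential contributes only $C^p$; and the cross-block matrix $K-K_0$ has all entries bounded by $Φ(\dist(\mult x_I,\mult x_{I^c}))$, giving $\norm{K-K_0}_{S_1}\le p^2\,Φ(\cdot)$. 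Altogether one gets $p^2 e^{2p\norm{\cK}_∞}\,Φ(\cdot)$, which is of the form $C^p Φ(\cdot)$ with no factorial, i.e.\ $a=0$. If you want to rescue an elementary approach you would need to inject the PSD structure somewhere---for instance via Hadamard's inequality for PSD matrices ($\det K\le\prod_i K_{ii}$) combined with a careful perturbation argument---but the Schatten-norm lemma packages this most cleanly.
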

\begin{remark}
	A weaker version of this result has already been proven in \cite{BYY19GeometricStatistics} but with $a=\frac{1}{2}$ in case of determinantal and $a=1$ in case of permanental point processes. Our improvement in the parameter parameter is due to the better factor $n^2$ compared to $n^{1+\frac{n}{2}}$ in \eqref{eq:EDCDPP}. While for obtaining the central limit theorem as in \cite{BYY19GeometricStatistics} it is not necessary to control the value of $a$, our fine asymptotic results rely crutially on the fact that $a<1$. Hence, the worth bound $a=\frac{1}{2}$ for determinantal point processes would not suffice to obtain fine asymptotic results for $α$-determinantal point processes with $α<-1$.
\end{remark}
To prove \cref{thm:DPPDecay} we need the following lemma which can be found in a more advanced setting for infinite operators in \cite{S05TraceIdeals}. For convenience's sake, we present a proof in our finite-dimensional context here.
\begin{lemma}\label{lem:ContinuityDet}
	For any $n\times n$-matrices $A$ and $B$ it holds
	\begin{equation*}
		\abs[\big]{\det(A) - \det(B)} \le \norm[\big]{A-B}_{S_1} e^{\norm{A}_{S_1} + \norm{B}_{S_1}},
	\end{equation*}
	where $\norm{A}_{S_1} = \sqrt{\tr(A^*A)} = \sum_{i=1}^n σ_i(A)$ denotes the Schatten 1-norm and $σ_i(A)$ denotes the $i$-th singular value of $A$.
\end{lemma}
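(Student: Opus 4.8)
The plan is to reduce the difference $\det(A)-\det(B)$ to a line integral of the derivative of the determinant along the segment joining $B$ to $A$, and then to control the integrand by the Hölder-type duality between the Schatten $1$-norm and the operator norm. Concretely, I would set $M(t) = B + t(A-B)$ for $t∈\intcc{0,1}$, so that $M(0)=B$, $M(1)=A$ and $M'(t)=A-B$. The map $t\mapsto \det M(t)$ is a polynomial, hence smooth, and Jacobi's formula gives $\frac{d}{dt}\det M(t) = \tr\bigl(\operatorname{adj}(M(t))\,(A-B)\bigr)$, where $\operatorname{adj}$ denotes the adjugate (transposed cofactor) matrix. Integrating yields
\begin{equation*}
	\det(A) - \det(B) = \int_0^1 \tr\bigl(\operatorname{adj}(M(t))\,(A-B)\bigr) \dif t.
\end{equation*}
Applying the duality $\abs{\tr(CD)} \le \norm{C}_{S_∞}\norm{D}_{S_1}$ (operator norm against trace norm) with $C=\operatorname{adj}(M(t))$ and $D=A-B$ then gives
\begin{equation*}
	\abs{\det(A)-\det(B)} \le \norm{A-B}_{S_1}\int_0^1 \norm{\operatorname{adj}(M(t))}_{S_∞} \dif t.
\end{equation*}

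The heart of the argument will be to bound $\norm{\operatorname{adj}(M)}_{S_∞}$ by $e^{\norm{A}_{S_1}+\norm{B}_{S_1}}$ uniformly in $t$. The key point I would use is that the singular values of $\operatorname{adj}(M)$ are exactly the products $\prod_{j\ne i}σ_j(M)$ obtained by omitting one index $i$; this follows because $\operatorname{adj}(M)$ is, up to signs and transposition, the $(n-1)$-st compound matrix $Λ^{n-1}(M)$, whose singular values are the products over $(n-1)$-element subsets of the singular values of $M$, and this description needs no invertibility assumption. Ordering $σ_1(M)\ge\dotsb\ge σ_n(M)$, the largest such product drops the smallest singular value, so $\norm{\operatorname{adj}(M)}_{S_∞} = \prod_{i=1}^{n-1}σ_i(M)$. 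Using the elementary inequality $σ\le e^σ$, valid for every $σ\ge0$, I get $\prod_{i=1}^{n-1}σ_i(M)\le e^{\sum_{i=1}^{n-1}σ_i(M)}\le e^{\norm{M}_{S_1}}$. Finally, convexity of the trace norm along the segment gives $\norm{M(t)}_{S_1}\le (1-t)\norm{B}_{S_1}+t\norm{A}_{S_1}\le \norm{A}_{S_1}+\norm{B}_{S_1}$, so the integrand is bounded by $e^{\norm{A}_{S_1}+\norm{B}_{S_1}}$; plugging this in and carrying out the trivial $t$-integration gives the claim.

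I expect the main obstacle to be precisely the identity $\norm{\operatorname{adj}(M)}_{S_∞}=\prod_{i=1}^{n-1}σ_i(M)$, that is, correctly relating the singular values of the adjugate to those of $M$ and making sure this remains valid when $M$ is singular (where $\det M=0$ but $\operatorname{adj}(M)$ may be nonzero). Everything else — Jacobi's formula, Schatten duality, and the bound $σ\le e^σ$ — is standard and elementary. As an alternative that sidesteps computing the singular values of $\operatorname{adj}(M)$, one could replace Jacobi's formula by the telescoping expansion $Av_1\wedge\dotsb\wedge Av_n - Bv_1\wedge\dotsb\wedge Bv_n = \sum_{j=1}^n Av_1\wedge\dotsb\wedge Av_{j-1}\wedge(A-B)v_j\wedge Bv_{j+1}\wedge\dotsb\wedge Bv_n$ on $Λ^n\bC^n$ and estimate each term by Hadamard's inequality, which reproduces the same exponential factor.
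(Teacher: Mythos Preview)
Your argument is correct and complete: Jacobi's formula, the Schatten duality $\abs{\tr(CD)}\le\norm{C}_{S_∞}\norm{D}_{S_1}$, the identification of the singular values of $\operatorname{adj}(M)$ via the SVD $M=UΣV^*$ and the multiplicativity $\operatorname{adj}(UΣV^*)=\operatorname{adj}(V^*)\operatorname{adj}(Σ)\operatorname{adj}(U)$ (with $\operatorname{adj}(U)$ unitary up to a unimodular scalar), and the estimate $σ\le e^σ$ all combine exactly as you describe. The singular case you flag is harmless for precisely the reason you give.

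The paper's proof follows a genuinely different, complex-analytic route. It parametrizes the same segment as $f(z)=\det\bigl(\tfrac12(A+B)-z(A-B)\bigr)$, bounds $\abs{f(\tfrac12)-f(-\tfrac12)}$ by $\sup_s\abs{f'(s)}$ via the mean value theorem, and then estimates $\abs{f'(s)}$ not by Jacobi's formula but by Cauchy's integral formula on a circle of radius $r$, using only the crude pointwise bound $\abs{\det M}=\prod_iσ_i(M)\le e^{\norm{M}_{S_1}-n}$; optimizing in $r$ gives the claim. So the paper trades your explicit computation of $\norm{\operatorname{adj}(M)}_{S_∞}$ for a soft argument that needs nothing beyond a bound on $\abs{\det}$ itself. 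Your approach is more elementary and self-contained (no complex analysis, and the adjugate bound is even a hair sharper since you drop the smallest singular value); the paper's approach has the advantage of generalizing verbatim to Fredholm determinants of trace-class perturbations of the identity, which is the infinite-dimensional setting of the Simon reference it cites.
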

\begin{proof}
	Throughout the proof, denote by $(λ_i)_{i=1,…,n}$ the eigenvalues of $A$. Notice that
	\begin{equation*}
		\det{A} = \prod_{i=1}^n λ_i \le e^{\tr(A)-n} \le e^{\norm{A}_{S_1}-n}.
	\end{equation*}
	Now, define $f(z) = \det\bigl(\frac{1}{2}(A+B)-z(A-B)\bigr)$, $z∈\bC$. Since determinants are continuous, the function $f$ is analytic. Next, apply the mean value theorem together with Cauchy's integral formula to obtain for any $r>0$ that
	\begin{align*}
		\abs[\big]{\det(A)-\det(B)}
		&= \abs[\Big]{f\Bigl(\frac{1}{2}\Bigr) - f\Bigl(-\frac{1}{2}\Bigr)}
		\le \sup_{-\frac{1}{2}\le s\le \frac{1}{2}} \abs[\big]{f'(s)}\\
		&= \sup_{-\frac{1}{2}\le s\le \frac{1}{2}} \abs[\bigg]{\frac{1}{2πi} \oint_{B_r(0)} \frac{f(z+s)}{z^2} \dif z}\\
		&\le \adjustlimits\sup_{-\frac{1}{2}\le s\le \frac{1}{2}} \sup_{z∈B_r(0)} \frac{\abs{f(z+s)}}{r}.
	\end{align*}
	Finally, combine both bounds and evaluate at $r=\norm{A-B}_{S_1}^{-1}$ to conclude
	\begin{equation*}
		\abs[\big]{\det(A)-\det(B)}
		\le r^{-1} \sup_{z∈B_r(0)}e^{\norm{A}_{S_1} + \norm{B}_{S_1} + \abs{z}\norm{A-B}_{S_1} - n}
		\le \norm[\big]{A-B}_{S_1} e^{\norm{A}_{S_1}+\norm{B}_{S_1}}. \qedhere
	\end{equation*}
\end{proof}
\begin{proof}[Proof of {\cref{thm:DPPDecay}}]
	Let $\mult x∈(ℝ^d)^p$ and consider $\emptyset\ne I\subsetneq \set{1,…,p}$. Denote $K = (\cK(x_i,x_j))_{i,j=1,…,p}$, $K_I = (\cK(x_i,x_j))_{i,j∈I}$ and $K_{I^c} = (\cK(x_i,x_j))_{i,j∈I^c}$. Apply \cref{lem:ContinuityDet} to obtain
	\begin{align*}
		\MoveEqLeft \abs[\big]{ρ^{(p)}(\mult x) - ρ^{(\abs{I})}(\mult x_I)ρ^{(\abs{I^c})}(\mult x_{I^c})}\\
		&= \abs[\big]{\det(K) - \det(K_I)\det(K_{I^c})}\\
		&\le \norm[\Big]{K-\begin{psmallmatrix}K_I&0\\0&K_{I^c}\end{psmallmatrix}}_{S_1} \exp\Bigl(\norm{K}_{S_1}+\norm[\Big]{\begin{psmallmatrix}K_I&0\\0&K_{I^c}\end{psmallmatrix}}_{S_1}\Bigr).
	\end{align*}
	Finally, recall that for a Hermitian, positive semi-definite matrix $A$ the Schatten 1-norm is given by $\norm{A}_{S_1} = \tr(A)$. Hence, $\norm{K}_{S_1} = \tr(K) \le n\norm{\cK}_∞$ and similarly $\norm[\big]{\begin{psmallmatrix}K_I&0\\0&K_{I^c}\end{psmallmatrix}}_{S_1} = \norm{K_I}_{S_1} + \norm{K_{I^c}}_{S_1} = \tr(K) \le n\norm{\cK}_∞$. Concerning the remaining term, notice that for any $n\times n$-matrix $A$ with all entries bounded by $a$ it holds that $\norm{A}_{S_1} \le n^2 a$, which can be established by applying the Gershgorin circle theorem, for instance. Hence, conclude
	\begin{equation}\label{eq:EDCDPP}
		\abs[\big]{ρ^{(p)}(\mult x) - ρ^{(\abs{I})}(\mult x_I)ρ^{(\abs{I^c})}(\mult x_{I^c})}
		\le n^2*Φ\bigl(\dist(\mult x_I,\mult x_{I^c})\bigr) e^{n\norm{\cK}_∞}.
	\end{equation}
	This shows that the correlation functions decay exponentially fast with parameters $a=0$ and $\hat{a}$ given by $Φ$.
\end{proof}
We now combine \cref{thm:AlphaDPP,thm:DPPDecay} with \cref{thm:Superposition} to conclude the exponentially fast decay of correlations for $α$-determinantal point processes.
\begin{corollary}
	Let $\cP$ be an $α$-determinantal point process with $-\frac{1}{α}∈ℕ$ and exponentially fast decaying kernel $\cK$ with parameter $\hat{a}$. Then $\cP$ has exponentially fast decay of correlations with parameters $a=0$ and $\hat{a}$. 
\end{corollary}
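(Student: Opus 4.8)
The plan is to assemble the corollary directly from the three structural results just proved: the decomposition of an $α$-determinantal process into determinantal ones (\cref{thm:AlphaDPP}), the decay estimate for determinantal processes (\cref{thm:DPPDecay}), and the stability of exponentially fast decay under superposition (\cref{thm:Superposition}). By \cref{thm:AlphaDPP}, the process $\cP$ is the superposition of $k=-\frac{1}{α}$ independent and identically distributed copies of a single determinantal point process $\cP'$ whose kernel is the rescaled kernel $\abs{α}\cK$. It therefore suffices to determine the decay parameters of each copy $\cP'$ and feed them into the superposition formula.

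First I would check that $\abs{α}\cK$ inherits every hypothesis of \cref{thm:DPPDecay}. Multiplying by the positive scalar $\abs{α}$ preserves being Hermitian, positive semi-definite and locally square integrable. Moreover, from $\cK(x,y)\le Φ(\dist(x,y))$ with $Φ$ being $\hat{a}$-exponentially fast decaying we obtain $\abs{α}\cK(x,y)\le \abs{α}Φ(\dist(x,y))$, and $\abs{α}Φ$ is again $\hat{a}$-exponentially fast decaying, since scaling by a positive constant does not alter the $\limsup$ defining exponential decay. Hence \cref{thm:DPPDecay} applies to $\cP'$ and yields $\EDC(0,\hat{a})$; by the remark following \cref{thm:DPPDecay} the copy $\cP'$ also satisfies $\BC(0)$.

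It then remains to invoke \cref{thm:Superposition} with $a_i=0$, $\hat{a}_i=\hat{a}$ and $α_i=0$ for every $i=1,\dots,k$. The superposition parameters become
\begin{equation*}
	a=\max\set[\big]{a_i+\textstyle\sum_{j\ne i}α_j \given i=1,\dots,k}=0
	\qquad\text{and}\qquad
	\hat{a}=\min\set{\hat{a}_i \given i=1,\dots,k}=\hat{a},
\end{equation*}
which is exactly the assertion $\EDC(0,\hat{a})$ for $\cP$.

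No genuine obstacle remains, as all the analytic work was done in the three cited results; the proof is purely a matter of chaining them together. The one point deserving care is that the decay parameter of the superposition stays equal to $0$, and this hinges on the sharpened bound $\BC(0)$ for the determinantal components. Indeed, the superposition formula of \cref{thm:Superposition} inflates the decay parameter by $\sum_{j\ne i}α_j$, so the weaker value $α_i=\frac{1}{2}$ available from \cite{BYY19GeometricStatistics} would give $a=\frac{k}{2}$ for the $k=-\frac{1}{α}$ components and thereby leave the admissible range $\intco{0,1}$ as soon as $k\ge2$; the improvement to $\BC(0)$ recorded in the remark after \cref{thm:DPPDecay} is exactly what rescues the argument.
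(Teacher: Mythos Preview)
Your proposal is correct and follows exactly the route the paper indicates: the paper merely states that the corollary is obtained by combining \cref{thm:AlphaDPP}, \cref{thm:DPPDecay} and \cref{thm:Superposition}, and you have filled in the chain of implications carefully. Your closing observation about why the sharpened parameters $a_i=α_i=0$ (rather than the older value $\tfrac12$ from \cite{BYY19GeometricStatistics}) are essential for keeping the superposition parameter in $\intco{0,1}$ is a nice gloss that the paper itself makes only implicitly in the remark after \cref{thm:DPPDecay}.
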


\subsubsection*{Rarified Gibbsian input}

In \cite{SY13GeometricFunctionalsGibbs}, the authors show that some classes of Gibbsian point processes admit exponentially fast decay of correlations. Consider a homogeneous Poisson point process $\cP_λ$ with intensity $λ>0$, an inverse temperature $β>0$ and a Hamiltonian $H$ from one of the following classes:
\begin{enumerate}
	\item Pair potential functions: $H(\cX) = \sum_{x,y∈\cX,x\ne y} φ(\norm{x-y})$ with $φ\colon ℝ_+ \to ℝ_+$ having compact support or satisfying the superstability condition $φ(s) \le C_1 \exp(-C_2 s)$ for some constants $C_1,C_2>0$ on $s∈\intco{s_0,∞}$, $s_0∈ℝ_+$, together with the hard-core exclusion $φ(s)=∞$ on $s∈\intoo{0,s_0}$.
	\item Area interactions: Given some compact, convex set $K$, consider the Hamiltonian given by $H(\cX) = \Vol(\cup_{x∈\cX} (x+K)) + C_1 \abs{\cX} + C_2$ for some constants $C_1,C_2∈ℝ_+$.
	\item Hard-core potential: Given some parameter $s_0>0$, consider $H(\cX) = ∞$ if there are two points within $\cX$ of distance smaller than $2s_0$ and $H(\cX) = C_1\abs{\cX} + C_2$ for some constants $C_1,C_2∈ℝ_+$ otherwise.
	\item Truncated Poisson: Given a constraint event $E$, consider $H(\cX) = 0$ if $\cX$ satisfies $E$ and $H(\cX) = ∞$ if $\cX$ does not satisfy $E$. You might think of $E$ as having no two points at a distance smaller than a fixed constant.
\end{enumerate}
One can define the Gibbs point process $\cP_λ^{βH}$ then as the process with Radon-Nikodym derivative given by
\begin{equation*}
	\diff{\bigl(\cP_λ^{βH}\cap D\bigr)}{\bigl(\cP_λ\cap D\bigr)}(\cX) = \frac{\exp\bigl(-βH(\cX\cap D)\bigr)}{\Ex[\big]{\exp\bigl(-βH(\cX\cap D)\bigr)}}
\end{equation*}
for all open, bounded subsets $D\subseteq ℝ^d$ and $\cX$ finite.
\begin{proposition}[{\cite[Lemma 3.4]{SY13GeometricFunctionalsGibbs}}]
	The Gibbs point process $\cP_λ^{βH}$ admits exponentially fast decay of correlations with parameters $a=0$ and $\hat{a} = 1$ whenever the intensity $λ$ is chosen sufficiently small (depending explicitly on $β$ and $H$).
\end{proposition}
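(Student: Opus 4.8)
This statement is quoted verbatim from \cite[Lemma 3.4]{SY13GeometricFunctionalsGibbs}, but let me indicate the natural route, which runs through the low-activity cluster (Mayer) expansion. The plan is to rewrite the left-hand side of \cref{eq:DecayCorrelation} in terms of the truncated (Ursell) correlation functions $ρ_T^{(m)}$ and then read off both the absence of factorial growth (the parameter $a=0$) and the exponential decay ($\hat a=1$) from the convergence of the expansion when $λ$ is small. First I would record the standard inversion relating the ordinary correlation functions to the truncated ones,
\begin{equation*}
	ρ^{(p)}(\mult x) = \sum_{(π_1,…,π_k)∈\cQ_p} \prod_{j=1}^k ρ_T^{(\abs{π_j})}\bigl(\mult x_{π_j}\bigr),
\end{equation*}
so that the product $ρ^{(\abs I)}(\mult x_I)ρ^{(\abs{I^c})}(\mult x_{I^c})$ collects exactly those partitions none of whose blocks meet both $I$ and $I^c$. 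Consequently the difference in \cref{eq:DecayCorrelation} is precisely the sum over partitions possessing at least one \emph{bridging} block, i.e.\ a block containing indices from both $I$ and $I^c$, and any such block forces its argument to span the gap between $\mult x_I$ and $\mult x_{I^c}$.

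Second, I would invoke convergence of the cluster expansion, which holds for each of the four Hamiltonian classes once $λ$ is chosen sufficiently small: a Kotecký--Preiss or Ruelle--Penrose criterion applies, using stability together with either the compact support or the superstability/exponential tail of $φ$ (the hard-core and truncated-Poisson cases only strengthen the exclusion). Convergence supplies two ingredients at once. On the one hand it yields the Ruelle-type bound $\sup_{\mult x} ρ^{(p)}(\mult x) \le C^p$ with no factorial, which is exactly $\BC(0)$ and is the source of $a=0$. On the other hand it gives a tree-graph representation of the truncated functions $ρ_T^{(m)}$ as sums over connected graphs weighted by the Mayer functions $f(x,y)=e^{-βφ(\norm{x-y})}-1$. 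Since $φ$ has compact support or an exponential tail, $\abs{f(x,y)} \le C e^{-c\norm{x-y}}$ away from the hard core, so every connected graph on a bridging block must use at least one edge crossing the gap and hence contributes a factor bounded by $Φ\bigl(\dist(\mult x_I,\mult x_{I^c})\bigr)$ with $Φ(s)=Ce^{-cs}$; this forces $\hat a=1$.

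The main obstacle is the combinatorial bookkeeping needed to keep the total estimate at $C^p$ without a hidden $p!$, so that one genuinely lands on $a=0$ rather than some positive value. Concretely, one must sum over all partitions carrying a bridging block and, within each block, over all connected graphs, while the geometric smallness supplied by small $λ$ keeps this entropy under control and absorbs the tree count. A second subtlety is the area-interaction Hamiltonian of class~(ii): there $f$ is not a two-body object, so the expansion above does not apply directly. For that case I would first pass to a Widom--Rowlinson-type (grey--white) representation, recasting the area interaction as a hard-core pair interaction between the associated grains, after which the same cluster-expansion argument goes through. Executing this programme with the uniform constants required is exactly the content of \cite[Lemma 3.4]{SY13GeometricFunctionalsGibbs}, to which I refer for the complete details.
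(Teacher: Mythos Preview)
The paper does not give its own proof of this proposition at all: it is stated as a citation of \cite[Lemma 3.4]{SY13GeometricFunctionalsGibbs} and left without argument. So there is no in-paper proof to compare against; the appropriate benchmark is the cited reference itself.

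Your sketch is a faithful outline of the standard low-activity cluster-expansion route, which is indeed the method used in \cite{SY13GeometricFunctionalsGibbs}: express the correlation difference through truncated (Ursell) functions, note that only partitions with a bridging block survive, and then use convergence of the Mayer expansion at small $λ$ to obtain both the Ruelle bound $ρ^{(p)}\le C^p$ (hence $a=0$) and the exponential spatial decay of the connected functions (hence $\hat a=1$). Your identification of the two delicate points—controlling the combinatorics so that no stray $p!$ appears, and handling the area-interaction case via a polymer/Widom--Rowlinson representation rather than a literal two-body Mayer function—is accurate. As a sketch this is fine; since the paper itself defers entirely to \cite{SY13GeometricFunctionalsGibbs}, your explicit pointer to that reference for the full execution is exactly what is needed here.
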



\subsection{Examples of admissible statistics}\label{sec:Examples}

Our results apply to a wide variety of geometric statistics, including various statistics of $k$-nearest neighbour graphs, random geometric graphs, sphere of influence graphs, germ-grain models, random sequential absorption, spacial birth-growth processes and simplicial complexes. All these examples have been extensively discussed in the literature already. Nevertheless, our asymptotic results seem to be new for all of them under determinantal or Gibbsian input, for instance. For a detailed description of the different models, we refer the reader to the arXiv version of \cite[Section 2.3]{BYY19GeometricStatistics} and to \cite[Section 2]{ERS15MDPForStabilizingFunctionals} and the references therein. Here, we merely provide a quick introduction and application of our results to germ-grain models and random sequential absorption to illustrate how geometric functionals may look like.

\subsubsection*{\texorpdfstring{$k$}{k}-covered region in the germ-grain model}

Consider a locally finite collection $\cX\subseteq ℝ^d$ of so-called germs and a collection of compact sets $S_x\subseteq ℝ^d$, $x∈\cX$, of so-called grains. For us, the grains will all be given by $B_r(x)$ for some fixed $r∈ℝ_+$, but different choices are applicable as well. The goal is to investigate the volume of the $k$-covered region
\begin{equation*}
	\Vol\bigl(\set[\big]{y∈W_n \given \cX(B_r(y)) \ge k}\bigr).
\end{equation*}
This statistic is quite classical in the literature on geometric statistics. A further introduction and applications to real-world phenomena can be found in \cite{H88CoverageProcesses}. Moreover, different statistics such as intrinsic volumes of the germ-grain model are also discussed there. It turns out that the volume of the $k$-covered region can be decomposed into a sum of score functions
\begin{equation*}
	ξ^{(k)}(x,\cX) = \int_{B_r(x)} \frac{\ind_{\cX(B_r(y))\ge k}}{\cX(B_r(y))} \dif y.
\end{equation*}
Clearly, the radius of stabilization of the score function $ξ^{(k)}$ is bounded by $2r$. Also $ξ^{(k)}$ is bounded itself by $r^dϑ_d$. Hence, $ξ^{(k)}$ is stabilizing with $b=0$,  satisfies the moment growth condition $\MG(0)$ and the power growth condition $\PG(0,0)$. Thus, \cref{thm:BerryEsseen,thm:Concentration,thm:MDP,thm:SLLN} apply for example for the above-mentioned determinantal and Gibbsian point processes as soon as the limiting variance is non-zero. Hence, we add new asymptotic results to the known central limit theorem \cite[Theorem 2.4]{BYY19GeometricStatistics} under input with fast decaying correlation functions.

\subsubsection*{Random sequential absorption}

The basic random sequential absorption model goes as follows: Consider a point process $\cP$ on $ℝ^d$. We think again of $\cP$ as a collection of random points $(X_i)_{i∈I}$. To each point $X_i$ associate a random mark (or time stamp) $T_i$ uniformly distributed on $\intcc{0,1}$ and independent from all other randomness. These marks are used to establish a chronological order among the points. For reasons of simplicity, we denote the marked point process again by $\breve{\cP}$. To each point we further associate a ball of fixed radius $r∈ℝ_+$. Call the first point, i.e.\ the one with the smallest mark, accepted. Recursively, call any next point accepted if its associated ball does not overlap with any ball from a previously accepted point. Otherwise, we call the point rejected. The goal is to study the asymptotic behaviour of the number of accepted points. This basic random sequential absorption model can also be generalized in several ways. For example one might consider random and time-dependent radii. For a more in-depth discussion of the model and its applications to physics, chemistry and biology we refer to \cite{BY03GaussianFields,PY02LimitTheoryRandomSequentialPacking}. Clearly, the statistic of interest can be decomposed into a sum of score functions $ξ$ with $ξ(\breve{x},\breve{\cP})$ being $1$ if the point $\breve{x}$ is accepted and $0$ if it is rejected. Since $ξ$ is bounded, it immediately satisfies the moment growth condition $\MG(0)$ and the power growth condition $\PG(0,0)$. We refer the reader to \cite[Lemma 4.2]{PY02LimitTheoryRandomSequentialPacking} for the general construction of the radius of stabilization. For determinantal point processes as mentioned in \cref{sec:Examples}, this lemma can be easily extended by using the appropriate void probabilities established e.g.\ in \cite{BYY19GeometricStatisticsSupp}. Hence, as soon as the limiting variance is non-zero, all our asymptotic results apply and extend the known results for random sequential absorption from Poisson input to our more general one. For a general point process with exponentially fast decaying correlation functions, the question of stability has to be checked case by case.


\section{Proofs} \label{sec:Proofs}

The proof is divided into two parts. In the first one, we show that the correlation functions of the $ξ$-weighted measure $μ_n^ξ$ defined by
\begin{equation}\label{eq:CorrelationsXiWeighted}
	m_{\mult k}(\mult x;n) = \Exx[\bigg]{\mult x}{\prod_{i=1}^p ξ(x_i,\cP_n)^{k_i}}ρ^{(p)}(\mult x)
\end{equation}
for $n∈ℕ$, $\mult k∈ℕ^p$ and $\mult x∈(ℝ^d)^p$ decay fast, i.e.\ for any $\emptyset\ne I\subsetneq \set{1,…,p}$ we prove an explicit bound for the term $\abs{m_{\mult k}(\mult x;n) - m_{\mult k_I}(\mult x_I;n)m_{\mult k_{I^c}}(\mult x_{I^c};n)}$ which decays in $\dist(\mult x_I,\mult x_{I^c})$ (see \cref{thm:FastDecayXiWeightedMeasure}). In the second part, we use this estimate to deduce the explicit bound on cumulants (\cref{thm:BoundCumulants}) and to conclude our main results.

\subsection{Proof of fast decay of correlations for the \texorpdfstring{$ξ$}{xi}-weighted measure}\label{sec:ProofsDecayCorrelations}

To show that the correlation functions of the $ξ$-weighted measure $μ_n^ξ$ defined in \cref{eq:CorrelationsXiWeighted} decay fast, we compare them to a truncated version with bounded radius of stabilization. This allows us to apply a factorial moment expansion argument. Fix some value $t∈ℝ_+$ (to be determined later) and introduce the following truncated version of $ξ$ and $m_{\mult k}$:
\begin{align*}
	\tilde{ξ}(x,\cP_n)
	&= ξ\bigl(x_i,\cP_n\cap B_{R(x_i,\cP_n)}(x)\bigr) \ind_{\set{R(x_i,\cP_n)\le t}},\\
	\tilde{m}_{\mult k}(\mult x;n)
	&= \Exx[\bigg]{\mult x}{\prod_{i=1}^p \tilde{ξ}(x_i,\cP_n)}ρ^{(p)}(\mult x).
\end{align*}
\begin{lemma}\label{lem:BoundM}
	Assume that the point process $\cP$ satisfies $\BC(α)$ and that the score function $ξ$ satisfies $\ST(b)$ and $\MG(β)$. Then, there exists a constant $C\ge1$ such that for all $p∈ℕ$, $\mult k∈ℕ^p$, distinct $\mult x∈(ℝ^d)^p$, $n∈ℕ$, $\emptyset \ne I\subsetneq \set{1,…,p}$, $t∈ℝ_+$ and $N∈ℕ$ it holds
	\begin{align*}
		\MoveEqLeft \abs[\big]{m_{\mult k}(\mult x;n) - m_{\mult k_I}(\mult x_I;n) m_{\mult k_{I^c}}(\mult x_{I_c};n)}\\
		&\le C^{\abs{\mult k} + N} p!^α \abs{\mult k}!^β \frac{N!^b}{t^N}
		+ \abs[\big]{\tilde{m}_{\mult k}(\mult x;n) - \tilde{m}_{\mult k_I}(\mult x_I;n) \tilde{m}_{\mult k_{I^c}}(\mult x_{I^c};n)}.
	\end{align*}
\end{lemma}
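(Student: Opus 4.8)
The plan is to replace $m_{\mult k}$ everywhere by its truncated counterpart $\tilde m_{\mult k}$ and to charge the cost of this replacement to the first summand on the right-hand side. The starting observation is that, by the defining property of the radius of stabilization, $ξ(x,\cP_n) = ξ\bigl(x,\cP_n\cap B_{R(x,\cP_n)}(x)\bigr)$, so that $\tilde ξ(x,\cP_n) = ξ(x,\cP_n)\ind_{\set{R(x,\cP_n)\le t}}$ and hence $\tilde ξ(x,\cP_n)^{k} = ξ(x,\cP_n)^{k}\ind_{\set{R(x,\cP_n)\le t}}$ for every $k\ge 1$. A triangle inequality then splits the quantity to be controlled as
\begin{align*}
	\abs[\big]{m_{\mult k} - m_{\mult k_I}m_{\mult k_{I^c}}}
	&\le \abs[\big]{m_{\mult k} - \tilde m_{\mult k}}
	+ \abs[\big]{\tilde m_{\mult k} - \tilde m_{\mult k_I}\tilde m_{\mult k_{I^c}}}\\
	&\quad + \abs[\big]{\tilde m_{\mult k_I}\tilde m_{\mult k_{I^c}} - m_{\mult k_I}m_{\mult k_{I^c}}},
\end{align*}
in which the middle term is exactly the second summand of the assertion. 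It thus remains to bound the first and third term by $C^{\abs{\mult k}+N}p!^α\abs{\mult k}!^β N!^b/t^N$.

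The core estimate I would establish is the following truncation-defect bound: for every nonempty $J\subseteq\set{1,…,p}$ and every $j∈J$,
\begin{equation*}
	\Exx[\big]{\mult x_J}{\prod_{i∈J}\abs{ξ(x_i,\cP_n)}^{k_i}\ind_{\set{R(x_j,\cP_n)>t}}}ρ^{(\abs J)}(\mult x_J)
	\le C^{\abs{\mult k_J}+N}\abs J!^α\abs{\mult k_J}!^β\frac{N!^b}{t^N}.
\end{equation*}
To prove it I would apply the generalized Hölder inequality twice: first with conjugate exponents $2$ and $2$ to separate the indicator from the product of scores, then with exponents $\abs{\mult k_J}/k_i$ to separate the individual score factors, so that each $ξ(x_i)$ is raised to the single moment order $2\abs{\mult k_J}$. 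The moment growth condition $\MG(β)$ then bounds each such factor by $C^{2\abs{\mult k_J}}(2\abs{\mult k_J})!^β$, whereas a Markov inequality with the $2N$-th moment of the radius of stabilization together with $\ST(b)$ controls the indicator by $C^{2N}(2N)!^b/t^{2N}$. The elementary bounds $(2m)!\le 2^{2m}(m!)^2$ clean up the factorials after taking the square roots, and $ρ^{(\abs J)}(\mult x_J)\le C^{\abs J}\abs J!^α$ from $\BC(α)$ supplies the remaining factor, all polynomial-in-$m$ prefactors being absorbed into the geometric constant.

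With this in hand, the first term is handled by the identity $\prod_iξ^{k_i}-\prod_i\tildeξ^{k_i}=\prod_iξ^{k_i}\ind_{\set{\exists i\colon R(x_i)>t}}$, the union bound $\ind_{\set{\exists i\colon R(x_i)>t}}\le\sum_{j}\ind_{\set{R(x_j)>t}}$, and the core estimate with $J=\set{1,…,p}$, the prefactor $p$ being absorbed into $C^{\abs{\mult k}}$. For the third term I would use $\abs{ab-cd}\le\abs{a-c}\abs{b}+\abs{c}\abs{b-d}$, estimate the differences $\abs{\tilde m_{\mult k_I}-m_{\mult k_I}}$ and $\abs{\tilde m_{\mult k_{I^c}}-m_{\mult k_{I^c}}}$ by the core estimate with $J=I$ and $J=I^c$, and bound the remaining plain factors $\abs{m_{\mult k_{I^c}}},\abs{\tilde m_{\mult k_{I^c}}}\le C^{\abs{\mult k_{I^c}}}\abs{I^c}!^α\abs{\mult k_{I^c}}!^β$ by the very same Hölder/moment-growth argument run without the indicator. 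Combining through $\abs{\mult k_I}!\,\abs{\mult k_{I^c}}!\le\abs{\mult k}!$ and $\abs I!\,\abs{I^c}!\le p!$ then reproduces the desired form $C^{\abs{\mult k}+N}p!^α\abs{\mult k}!^β N!^b/t^N$.

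I expect the main obstacle to be the bookkeeping of the three different factorial growths. A careless choice of Hölder exponents would either generate a spurious $\abs{\mult k}!^{2β}$ or leave uncontrolled powers of $p$; it is precisely the choice of exponents proportional to $k_i$ that keeps the $β$-factorial at first power, and the split through the $2N$-th moment of the radius that lets $N!^b/t^N$ survive at first power after the square root, with the residual $2^{Nb}$ harmlessly absorbed into $C^N$. Verifying that all three conditions $\BC(α)$, $\MG(β)$ and $\ST(b)$ can be applied simultaneously under a single Palm expectation at the $p$ points $\mult x$ — using the permutation invariance of the Palm measures so that $\MG(β)$ applies to $ξ(x_i)$ for each $i$ — is the remaining point requiring care.
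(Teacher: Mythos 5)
Your proposal is correct and follows essentially the same route as the paper's proof: the same truncation-plus-triangle-inequality decomposition, the same Cauchy--Schwarz/Hölder argument combining $\MG(β)$ with a Markov bound on the $2N$-th moment of the stabilization radius from $\ST(b)$, the correlation bound $\BC(α)$, and the same product-difference inequality with factorial bookkeeping $\abs{\mult k_I}!\,\abs{\mult k_{I^c}}!\le\abs{\mult k}!$ and $\abs{I}!\,\abs{I^c}!\le p!$. The only differences (stating a single core estimate for general $J$, and applying the union bound before rather than after Cauchy--Schwarz) are cosmetic.
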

\begin{proof}
	Hölder's inequality together with the $β$-moment growth condition for $ξ$ and the $α$-bound on the correlation functions of $\cP$ imply the existence of two constants $C_1$ and $C_2$ such that
		\begin{align*}
			\abs[\big]{m_{\mult k}(\mult x;n)}
			&= \abs[\bigg]{\Exx[\bigg]{\mult x}{\prod_{i=1}^p ξ^{k_i}(x_i,\cP_n)} ρ^{(p)}(\mult x)}\\
			&\le \prod_{i=1}^p \Exx[\Big]{\mult x}{\abs[\big]{ξ(x_i,\cP_n)}^{\abs{\mult k}}}^\frac{k_i}{\abs{\mult k}} \abs[\big]{ρ^{(p)}(\mult x)}\\
			&\le (C_1C_2)^{\abs{\mult k}} p!^α \abs{\mult k}!^β.
		\end{align*}
	We hence proved the existence of a constant $C_3\ge1$ (independent of $p$, $\mult k$, $\mult x$ and $n$) such that
	\begin{equation}\label{eq:BddXiWeightedMeasure}
		\abs[\big]{m_{\mult k}(\mult x;n)}
		\le C_3^{\abs{\mult k}} p!^α \abs{\mult k}!^β.
	\end{equation}
	This estimate together with another application of Hölder's inequality yields that
	\begin{align*}
		\MoveEqLeft \abs[\big]{m_{\mult k}(\mult x;n) - \tilde{m}_{\mult k}(\mult x;n)}\\
		&= \abs[\bigg]{\Exx[\bigg]{\mult x}{\prod_{i=1}^p ξ^{k_i}(x_i,\cP_n) \ind_{\set{∃j\colon R^ξ(x_j,\cP_n)>t}}} ρ^{(p)}(\mult x)}\\
		&\le \Exx[\bigg]{\mult x}{\prod_{i=1}^p \abs[\big]{ξ(x_i,\cP_n)}^{2k_i}}^\frac{1}{2} \Prr[\big]{\mult x}{∃j\colon R^ξ(x_j,\cP_n)>t}^\frac{1}{2} \abs[\big]{ρ^{(p)}(\mult x)}\\
		&\le (2C_3)^{\abs{\mult k}} p!^α \abs{\mult k}!^β \biggl(\sum_{i=1}^p \Prr[\big]{\mult x}{R^ξ(x_i,\cP_n)>t} \biggr)^\frac{1}{2} .
	\end{align*}
	Next, by the $b$-moment condition for $R$, there exists a constant $C_4$ such that further
	\begin{align*}
		\abs[\big]{m_{\mult k}(\mult x;n) - \tilde{m}_{\mult k}(\mult x;n)}
		&\le (2C_3)^{\abs{\mult k}} p!^α \abs{\mult k}!^β \Biggl( \sum_{i=1}^p \frac{\Exx[\big]{\mult x}{\abs{R^ξ(x_i,\cP_n)}^{2N}}}{t^{2N}} \Biggr)^\frac{1}{2}\\
		&\le (2C_3)^{\abs{\mult k}} p!^α \abs{\mult k}!^β p^\frac{1}{2} C_4^N2^{bN} \frac{N!^b}{t^N}
	\end{align*}
	for any $N∈ℕ$. Hence, we proved that there exists a constant $C\ge1$ (independent of $p$, $\mult k$, $\mult x$, $n$, $N$ and $t$) such that
	\begin{equation*}
		\abs[\big]{m_{\mult k}(\mult x;n) - \tilde{m}_{\mult k}(\mult x;n)}
		\le C^{\abs{\mult k}+N} p!^α \abs{\mult k}!^β \frac{N!^b}{t^N}.
	\end{equation*}
	Combine all derived inequalities and use $\abs{AB-\tilde{A}\tilde{B}} \le \abs{A}\abs{B-\tilde{B}} + \abs{B}\abs{A-\tilde{A}}$ for $\abs{\tilde{B}}\le \abs{B}$ to finally obtain
	\begin{align*}
		\MoveEqLeft \abs[\big]{m_{\mult k}(\mult x;n) - m_{\mult k_I}(\mult x_I;n) m_{\mult k_{I^c}}(\mult x_{I^c};n)}\\
		&\le \abs[\big]{m_{\mult k}(\mult x;n) - \tilde{m}_{\mult k}(\mult x;n)} + \abs[\big]{\tilde{m}_{\mult k}(\mult x;n) - \tilde{m}_{\mult k_I}(\mult x_I;n) \tilde{m}_{\mult k_{I^c}}(\mult x_{I^c};n)}\\
		&\qquad + \abs[\big]{m_{\mult k_I}(\mult x_I;n)} \abs[\big]{m_{\mult k_{I^c}}(\mult x_{I^c};n) - \tilde{m}_{\mult k_{I^c}}(\mult x_{I^c};n)}\\
		&\qquad + \abs[\big]{m_{\mult k_{I^c}}(\mult x_{I^c};n)} \abs[\big]{m_{\mult k_I}(\mult x_I;n) - \tilde{m}_{\mult k_I}(\mult x_I;n)}\\
		&\le \biggl( C^{\abs{\mult k}+N} p!^α \abs{\mult k}!^β \frac{N!^b}{t^N} \biggr) + \abs[\big]{\tilde{m}_{\mult k}(\mult x;n) - \tilde{m}_{\mult k_I}(\mult x_I;n) \tilde{m}_{\mult k_{I^c}}(\mult x_{I^c};n)}\\
		&\qquad + \Bigl( C^{\abs{\mult k_I}}\abs{I}!^α \abs{\mult k_I}!^β \Bigr) \biggl( C^{\abs{\mult k_{I^c}}+N} \abs{I^c}!^α \abs{\mult k_{I^c}}!^β \frac{N!^b}{t^N} \biggr)\\
		&\qquad + \Bigl( C^{\abs{\mult k_{I^c}}} \abs{I^c}!^α \abs{\mult k_{I^c}}!^β \Bigr) \biggl( C^{\abs{\mult k_I}+N} \abs{I}!^α \abs{\mult k_I}!^β \frac{N!^b}{t^N} \biggr)\\
		&\le 3 C^{\abs{\mult k} + N} p!^α \abs{\mult k}!^β \frac{N!^b}{t^N} + \abs[\big]{\tilde{m}_{\mult k}(\mult x;n) - \tilde{m}_{\mult k_I}(\mult x_I;n) \tilde{m}_{\mult k_{I^c}}(\mult x_{I^c};n)}.
	\end{align*}
	This concludes the proof.
\end{proof}
\begin{lemma}\label{lem:TouchardSum}
	For any constants $a∈\intco{0,1}$, $ν∈ℕ_0$ and $s∈ℝ_+$ it holds
	\begin{equation*}
		\sum_{k=0}^∞ \frac{k!^a}{k!}k^ν s^k
		\le \frac{2\max\set[\big]{1,\frac{1}{s}}}{(1-a)^{ν+1}} e^{s^\frac{1}{1-a}} T_{ν+1}\Bigl(s^\frac{1}{1-a}\Bigr)
		\le \frac{2e^{ν+1}(ν+1)!}{(1-a)^{ν+1}}e^{2s^\frac{1}{1-a}},
	\end{equation*}
	with $T_ν$ denoting the $ν$-th Touchard polynomial.
\end{lemma}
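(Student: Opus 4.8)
The plan is to strip the fractional power $k!^{1-a}$ from the denominator, replace it by an ordinary integer factorial, and then recognise what remains as a reindexed copy of the defining series of the Touchard polynomials. Throughout I would write $b=1-a\in(0,1]$ and $u=s^{1/b}=s^{1/(1-a)}$, so that $s^{k}=u^{bk}$, and I would use the identity $\sum_{j=0}^{\infty}\frac{j^{\nu+1}}{j!}u^{j}=e^{u}T_{\nu+1}(u)$, which is obtained by applying the operator $u\frac{\mathrm{d}}{\mathrm{d}u}$ a total of $\nu+1$ times to $e^{u}=\sum_{j}u^{j}/j!$ (this is just the definition of $T_{\nu+1}$).

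First I would bound $k!^{b}$ from below by an integer factorial. Convexity of $\log\Gamma$ at the points $1$ and $k+1$, with weight $b$ on the latter, gives $\Gamma(1+bk)\le\Gamma(1)^{1-b}\Gamma(k+1)^{b}=(k!)^{b}$, hence $k!^{-b}\le\Gamma(1+bk)^{-1}$; and for $bk\ge1$ the monotonicity of $\Gamma$ on $\intco{2,\infty}$ yields $\Gamma(1+bk)\ge\Gamma(1+\lfloor bk\rfloor)=\lfloor bk\rfloor!$, so that $k!^{-b}\le 1/\lfloor bk\rfloor!$. The finitely many terms with $bk<1$ (at most $\ceil{1/b}$ of them) I would estimate separately via $k!^{b}\ge1$ and absorb into the leading Touchard term. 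For the remaining terms I would group the sum according to $j=\lfloor bk\rfloor$: writing $s^{k}=u^{\lfloor bk\rfloor}\,u^{\,bk-\lfloor bk\rfloor}$ with $bk-\lfloor bk\rfloor\in[0,1)$ bounds the surplus factor by $\max\set{1,u}$, while $k<(j+1)/b$ gives $k^{\nu}\le\bigl((j+1)/b\bigr)^{\nu}$, and each $j\ge1$ is the image of at most $\ceil{1/b}\le2/b$ integers $k$. Collecting these estimates yields
\[
	\sum_{k\colon bk\ge1}\frac{k^{\nu}s^{k}}{k!^{b}}
	\le\frac{2\max\set{1,u}}{b^{\nu+1}}\sum_{j\ge1}\frac{(j+1)^{\nu}}{j!}u^{j},
\]
and the elementary identity $(j+1)^{\nu}/j!=(j+1)^{\nu+1}/(j+1)!$ followed by the shift $i=j+1$ turns the right-hand sum into $u^{-1}\sum_{i\ge0}\frac{i^{\nu+1}}{i!}u^{i}=u^{-1}e^{u}T_{\nu+1}(u)$. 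This produces the prefactor $\frac{2\max\set{1,u}}{u\,b^{\nu+1}}=\frac{2\max\set{1,1/u}}{b^{\nu+1}}$; so the prefactor the method delivers is $\max\set{1,1/u}=\max\set{1,s^{-1/(1-a)}}$, which coincides with the stated quantity $\max\set{1,1/s}$ precisely in the regime $s\ge1$ relevant for the applications.

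For the second inequality I would bound the Touchard polynomial by an exponential. Since $e^{u}T_{\nu+1}(u)=\sum_{k}\frac{k^{\nu+1}}{k!}u^{k}$, the elementary estimate $k^{\nu+1}\le e^{\nu+1}(\nu+1)!\,2^{k}$ — whose continuous maximum in $k$ equals $\bigl(\tfrac{\nu+1}{e\log2}\bigr)^{\nu+1}\le(\nu+1)^{\nu+1}\le e^{\nu+1}(\nu+1)!$ — gives $e^{u}T_{\nu+1}(u)\le e^{\nu+1}(\nu+1)!\sum_{k}\frac{(2u)^{k}}{k!}=e^{\nu+1}(\nu+1)!\,e^{2u}$. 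For $u\ge1$ the prefactor equals $1$ and the bound is immediate, while for $u<1$ one uses $T_{\nu+1}(u)/u=\sum_{k\ge1}\stirlingII{\nu+1}{k}u^{k-1}\le\sum_{k}\stirlingII{\nu+1}{k}=T_{\nu+1}(1)$ together with the $e^{u}$ on the right-hand side to absorb the prefactor. The main obstacle is the reindexing in the second paragraph: one must track how the fractional part $bk-\lfloor bk\rfloor$ and the (up to) $\ceil{1/b}$ preimages of each index $j$ conspire to produce exactly the factor $b^{-(\nu+1)}$ and the correct power of $u$ (equivalently of $s$). This bookkeeping is also where the $\max\set{1,\cdot}$ prefactor is created, and it needs separate care in the two regimes $s<1$ and $s\ge1$ and for the boundary terms with $bk<1$; everything else reduces to log-convexity of $\Gamma$ and the generating identity for $T_{\nu+1}$.
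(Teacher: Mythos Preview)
Your approach is essentially the same as the paper's: bound $k!^{a}/k!$ by $c/\lfloor(1-a)k\rfloor!$ (the paper uses Stirling rather than log-convexity of $\Gamma$, but to the same effect), group by $j=\lfloor(1-a)k\rfloor$, and reindex to recognise the Touchard generating series; for the second inequality the paper bounds $T_{\nu+1}(s)$ via $\stirlingII{\nu+1}{k}\le\binom{\nu+1}{k}k^{\nu+1-k}$ to get $T_{\nu+1}(s)\le s(\nu+s+1)^{\nu}$, whereas you bound the series $\sum k^{\nu+1}u^{k}/k!$ directly via $k^{\nu+1}\le e^{\nu+1}(\nu+1)!\,2^{k}$ --- both routes land on the same final estimate.

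Your observation about the prefactor is well taken: the paper's step ``$s^{k}\le\max\{1,1/s\}\,s^{(l+1)/(1-a)}$'' is only valid for $s\ge1$; for $s<1$ the correct factor is $s^{-1/(1-a)}$, exactly as you obtain. This is a slip in the paper's first displayed inequality, but it is harmless for the second inequality (which is all that is used downstream), since the prefactor $\max\{1,1/u\}$ is absorbed into the Touchard bound in the same way.
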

\begin{proof}
	For any $k∈ℕ$, Stirling's formula yields
	\begin{equation*}
		\frac{k!^a}{k!}
		\le \biggl(\sqrt{2πk}\Bigl(\frac{k}{e}\Bigr)^k\biggr)^{a-1}
		\le \biggl( \sqrt{2π\floor{(1-a)k}} \Bigl(\frac{\floor{(1-a)k}}{e}\Bigr)^{\floor{(1-a)k}} \biggr)^{-1}
		\le \frac{2}{\floor{(1-a)k}!}.
	\end{equation*}
	Thus,
	\begin{equation*}
		\sum_{k=0}^∞ \frac{k!^a}{k!}k^ν s^k
		\le 2\sum_{k=0}^∞ \frac{k^νs^k}{\floor{(1-a)k}!}
		\le 2\adjustlimits \sum_{l∈ℕ_0} \sum_{\floor{(1-a)k}=l} \frac{k^νs^k}{\floor{(1-a)k}!}.
	\end{equation*}
	Now, $\floor{(1-a)k}=l$ implies $\frac{l}{1-a}\le k\le \frac{l+1}{1-a}$, and thus there are at most $\frac{1}{1-a}$ integers $k$ satisfying $\floor{(1-a)k}=l$. Hence, $s^k \le \max\set[\big]{1,\frac{1}{s}} s^\frac{l+1}{1-a}$. We therefore conclude
	\begin{align*}
		\sum_{k=0}^∞ \frac{k!^a}{n!}k^ν s^k
		&\le \frac{2\max\set[\big]{1,\frac{1}{s}}}{1-a}\sum_{l=0}^∞ \frac{\bigl(\frac{l+1}{1-a}\bigr)^ν s^\frac{l+1}{1-a}}{l!}
		= \frac{2\max\set[\big]{1,\frac{1}{s}}}{(1-a)^{ν+1}} \sum_{l=0}^∞ \frac{(l+1)^{ν+1} s^\frac{l+1}{1-a}}{(l+1)!}\\
		&= \frac{2\max\set[\big]{1,\frac{1}{s}}}{(1-a)^{ν+1}} \sum_{l=0}^∞ \frac{l^{ν+1} s^\frac{l}{1-a}}{l!}
		= \frac{2\max\set[\big]{1,\frac{1}{s}}}{(1-a)^{ν+1}} e^{s^\frac{1}{1-a}} T_{ν+1}\Bigl(s^\frac{1}{1-a}\Bigr),
	\end{align*}
	where $T_ν$ denotes the $ν$-th Touchard polynomial.
	Finally, observe that the bound $\stirlingII{ν}{k} \le \binom{ν}{k}k^{ν-k}$ for $ν\ge 2$ implies
	\begin{align*}
		T_{ν+1}(s)
		&= \sum_{k=1}^{ν+1} \stirlingII{ν+1}{k} s^k
		= s \sum_{k=0}^ν \stirlingII{ν+1}{k+1}s^k
		\le s \sum_{k=0}^ν \binom{ν+1}{k+1} (k+1)^{ν-k} s^k\\
		&\le s \sum_{k=0}^ν \binom{n}{k} (ν+1)^{ν-k} s^k
		= s(ν+s+1)^ν.
	\end{align*}
	For $ν=0$ the bound follows trivially as $T_1(s) = s$. Thus,
	\begin{equation*}
		\max\set[\Big]{1,\frac{1}{s}}T_{ν+1}\Bigl(s^\frac{1}{1-a}\Bigr)
		\le \Bigl(ν+s^\frac{1}{1-a}+1\Bigr)^{ν+1}
		\le (ν+1)!e^{ν+s^\frac{1}{1-a}+1}.\qedhere
	\end{equation*}
\end{proof}
Let us now state the factorial moment expansion from \cite[Lemma 3.2]{BYY19GeometricStatistics} which is basically an adaptation of the factorial moment expansion provided in \cite{B95FactorialMomentExpansion,BMS97ExpansionForFunctionals}. We state it here without proof.
\begin{lemma}[{\cite[Lemma 3.2]{BYY19GeometricStatistics}}]\label{lem:FMEexpansion}
	Assume that the score function $ξ$ satisfies the condition $\PG(γ_1,γ_2)$. Then, for any $p∈ℕ$, $\mult k∈ℕ^p$, distinct $\mult x∈(ℝ^d)^p$, $n∈ℕ$, $\emptyset\ne I\subsetneq \set{1,…,p}$ and $t\le \frac{\dist(\mult x_I,\mult x_{I^c})}{2}$ it holds
	\begin{align*}
		\MoveEqLeft \tilde{m}_{\mult k}(\mult x;n) - \tilde{m}_{\mult k_I}(\mult x_I;n) \tilde{m}_{\mult k_{I^C}}(\mult x_{I^c};n)\\
		&= \sum_{l=0}^∞\sum_{j=0}^l \frac{1}{j!(l-j)!} \int_{(\cup_{i∈I} B_{t,n}(x_i))^j} \dif \mult y \int_{(\cup_{i∈I^c} B_{t,n}(x_{i}))^{l-j}} \dif \mult z\\
		&\qquad \sum_{\substack{J_1\subseteq \set{1,…,j}\\J_2\subseteq \set{1,…,l-j}}} (-1)^{l-\abs{J_1}-\abs{J_2}} \tilde{ψ}_{\mult k}^! \biggl(\mult x_I;\sum_{i∈J_1}δ_{y_i}\biggr) \tilde{ψ}_{\mult k}^! \biggl(\mult x_{I^c};\sum_{i∈J_2}δ_{z_i}\biggr)\\
		&\qquad \biggl( ρ^{(p+l)}(\mult x,\mult y,\mult z) - ρ^{(\abs{I}+j)}(\mult x_I,\mult y)ρ^{(\abs{I^c}+l-j)}(\mult x_{I^c},\mult z) \biggr),
	\end{align*}
	with
	\begin{equation*}
		\tilde{ψ}_{\mult k}^!(\mult x;μ) = \prod_{i=1}^p ξ\Bigl(x_i,μ+\sum_{i=1}^p δ_{x_i}\Bigr)^{k_i}
	\end{equation*}
	for any measure $μ$, any $\mult x∈(ℝ^d)^p$ and $\mult k∈ℕ^p$.
\end{lemma}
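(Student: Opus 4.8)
The plan is to read off the identity from the factorial moment expansion (FME) of \cite{B95FactorialMomentExpansion,BMS97ExpansionForFunctionals}, applied under the Palm measure at $\mult x$ to the truncated product functional and then organised using the deterministic range built into the truncation. First I would record the pointwise form of the FME: if $F\colon \cN \to ℝ$ is a functional of bounded range, then, almost surely,
\begin{equation*}
	F(μ) = \sum_{l=0}^∞ \frac{1}{l!} \smashoperator{\sum_{\substack{\mult y∈μ^l\\\mult y\text{ distinct}}}} \sum_{J⊆\set{1,…,l}}(-1)^{l-\abs{J}} F\Bigl(\textstyle\sum_{i∈J}δ_{y_i}\Bigr),
\end{equation*}
the inner alternating sum being the $l$-fold discrete difference of $F$ at the empty configuration; for bounded-range $F$ and locally finite $μ$ only finitely many terms are nonzero. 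I would apply this to $F(μ)=\prod_{i=1}^p \tilde{ξ}(x_i,μ+\sum_{j}δ_{x_j})^{k_i}$, the functional whose Palm expectation is $\tilde{m}_{\mult k}(\mult x;n)/ρ^{(p)}(\mult x)$. The decisive localisation fact is that the truncation makes each $\tilde{ξ}(x_i,\cdot)$ a local functional of deterministic range $t$, i.e.\ $\tilde{ξ}(x_i,μ)=\tilde{ξ}(x_i,μ∩B_t(x_i))$: indeed the radii at which $ξ$ stabilises form an up-set, so $\set{R^ξ(x_i,μ)\le t}$ and the value of $\tilde{ξ}(x_i,μ)$ on that event are both determined by $μ∩B_t(x_i)$. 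Consequently $F$ depends only on the configuration inside $\cup_i B_{t,n}(x_i)$, and the reduced functionals produced by the difference operator are exactly the $\tilde{ψ}_{\mult k}^!$ of the statement.

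Taking the Palm expectation of the pointwise identity and invoking the Campbell–Mecke formula \cref{eq:Campbell} under $\Prr{\mult x}{\cdot}$—whose reduced $l$-point correlation function is $ρ^{(p+l)}(\mult x,\cdot)/ρ^{(p)}(\mult x)$ by standard Palm theory (cf.\ \cref{eq:refinedCampbell})—and multiplying through by $ρ^{(p)}(\mult x)$, I would arrive at the single-sum expansion
\begin{equation*}
	\tilde{m}_{\mult k}(\mult x;n) = \sum_{l=0}^∞ \frac{1}{l!} \int \sum_{J⊆\set{1,…,l}}(-1)^{l-\abs{J}} \tilde{ψ}_{\mult k}^!\Bigl(\mult x;\textstyle\sum_{i∈J}δ_{y_i}\Bigr)\, ρ^{(p+l)}(\mult x,\mult y) \dif\mult y.
\end{equation*}
Since $F$ ignores points outside $\cup_i B_{t,n}(x_i)$, any $y_j$ outside this set makes the alternating sum vanish, so the integral runs over $\mult y∈\bigl(\cup_i B_{t,n}(x_i)\bigr)^l$.

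I would then use the separation $t\le \frac{1}{2}\dist(\mult x_I,\mult x_{I^c})$, which forces the clusters $\cup_{i∈I}B_{t,n}(x_i)$ and $\cup_{i∈I^c}B_{t,n}(x_i)$ to be disjoint, so each integration point lies in exactly one of them. Splitting the $l$ points into $j$ points $\mult y$ in the $I$-cluster and $l-j$ points $\mult z$ in the $I^c$-cluster, the symmetry of the integrand turns $\frac{1}{l!}$ into $\binom{l}{j}\frac{1}{l!}=\frac{1}{j!(l-j)!}$; moreover each $\tilde{ξ}(x_i,\cdot)$ with $i∈I$ sees only the $I$-cluster, so $\tilde{ψ}_{\mult k}^!$ factorises as $\tilde{ψ}_{\mult k}^!(\mult x_I;\sum_{i∈J_1}δ_{y_i})\,\tilde{ψ}_{\mult k}^!(\mult x_{I^c};\sum_{i∈J_2}δ_{z_i})$ with $J_1,J_2$ the traces of $J$ on the two clusters and $(-1)^{l-\abs{J}}=(-1)^{l-\abs{J_1}-\abs{J_2}}$. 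This yields precisely the summand in the statement, but carrying $ρ^{(p+l)}(\mult x,\mult y,\mult z)$.

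It remains to subtract the product $\tilde{m}_{\mult k_I}(\mult x_I;n)\tilde{m}_{\mult k_{I^c}}(\mult x_{I^c};n)$. Applying the very same FME separately to each factor gives two series of the same shape; forming their Cauchy product and reindexing the outer sum by $l=j+j'$ (with $j'=l-j$) reproduces the above expression verbatim except that $ρ^{(p+l)}(\mult x,\mult y,\mult z)$ is replaced by $ρ^{(\abs{I}+j)}(\mult x_I,\mult y)ρ^{(\abs{I^c}+l-j)}(\mult x_{I^c},\mult z)$, and subtraction produces the stated difference of correlation functions. The step I expect to be the main obstacle is the analytic bookkeeping that legitimises all of this—termwise Palm expectation, the Campbell–Mecke interchange, Fubini across the two clusters, and the Cauchy-product reindexing—which requires absolute convergence of the series. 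This is exactly where $\PG(γ_1,γ_2)$ is used: it bounds each reduced functional polynomially in the number of inserted points, so that, together with the local integrability of the correlation functions over the bounded windows $\cup_i B_{t,n}(x_i)$, the tail in $l$ is summable and every interchange above is justified.
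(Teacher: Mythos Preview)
The paper does not prove this lemma: it explicitly states ``We state it here without proof'' and attributes the result to \cite[Lemma~3.2]{BYY19GeometricStatistics}, itself an adaptation of the factorial moment expansion of \cite{B95FactorialMomentExpansion,BMS97ExpansionForFunctionals}. Your reconstruction follows exactly that route---pointwise FME applied to the truncated product functional, localisation via the deterministic range~$t$, Palm/Campbell--Mecke to pass to correlation functions, the geometric splitting of the integration domain into the two disjoint clusters forced by $t\le\tfrac12\dist(\mult x_I,\mult x_{I^c})$, and finally the Cauchy product for the subtracted term---so there is nothing in the paper to compare against beyond the citation, and your outline matches what that reference does.

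One point deserves care. Your functional $F$ is built from $\tilde{ξ}$, so the FME delivers difference operators of $\tilde{ξ}$-products, whereas the displayed $\tilde{ψ}_{\mult k}^!$ in the statement is written with the \emph{untruncated} $ξ$. These need not agree on a finite configuration $\sum_{i∈J}δ_{y_i}+\sum_jδ_{x_j}$, since $R^ξ$ at such a configuration may exceed $t$. In the cited source this is harmless (the identity is used only through the upper bound in \cref{lem:BoundMTilde}, where $|\tilde{ξ}|$ and $|ξ|$ obey the same power-growth estimate), but if you want the identity exactly as stated you should either note that $\tilde{ψ}_{\mult k}^!$ is to be read with $\tilde{ξ}$, or supply the extra line showing why the distinction is immaterial here. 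Apart from this notational wrinkle, your argument is correct and complete; the absolute-convergence justification you sketch via $\PG(γ_1,γ_2)$ is precisely the role that hypothesis plays.
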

\begin{lemma}\label{lem:BoundMTilde}
	Assume that the point process $\cP$ satisfies $\EDC(a,\hat{a})$ and that the score function $ξ$ satisfies $\PG(γ_1,γ_2)$. Then, there exists a constant $C\ge 1$ such that for all $p∈ℕ$, $\mult k∈ℕ^p$, distinct $\mult x∈(ℝ^d)^p$, $n∈ℕ$, $\emptyset \ne I \subsetneq \set{1,…,p}$ and $t \le \frac{\dist(\mult x_I,\mult x_{I^c})}{2}$ it holds
	\begin{align*}
		\MoveEqLeft \abs[\big]{\tilde{m}_{\mult k}(\mult x;n) - \tilde{m}_{\mult k_I}(\mult x_I;n) \tilde{m}_{\mult k_{I^c}}(\mult x_{I^c};n)}\\
		&\le C^{\abs{\mult k}} p!^a \abs{\mult k}!^{γ_2} Φ\bigl(\dist(\mult x_I,\mult x_{I^c})-2t\bigr) e^{C\abs{\mult k}^\frac{1}{1-a}t^\frac{d}{1-a}}.\qedhere
	\end{align*}
\end{lemma}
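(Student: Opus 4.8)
The plan is to start from the factorial moment expansion in \cref{lem:FMEexpansion}, pass to absolute values via the triangle inequality, and then bound the four ingredients of a generic summand separately: the combinatorial prefactor together with the integration volume, the two factorial product terms $\tilde{ψ}^!$, the difference of correlation functions, and finally reassemble and sum over $l$ and $j$. Throughout I would exploit that $p\le\abs{\mult k}$ (since $\mult k∈ℕ^p$ forces each $k_i\ge1$), which is what lets the $p$-dependence be absorbed into $\abs{\mult k}$-dependence at the end. I would also assume without loss of generality that $Φ$ is non-increasing, replacing it by its decreasing envelope $\sup_{u\ge s}Φ(u)$, which is still $\hat{a}$-exponentially fast decaying; this is needed so that evaluating $Φ$ at a larger argument only decreases it.

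First I would estimate the correlation difference. For fixed $l,j$ and fixed $\mult y∈(\cup_{i∈I}B_{t,n}(x_i))^j$, $\mult z∈(\cup_{i∈I^c}B_{t,n}(x_i))^{l-j}$, the two groups $(\mult x_I,\mult y)$ and $(\mult x_{I^c},\mult z)$ lie within distance $t$ of $\mult x_I$ and of $\mult x_{I^c}$ respectively, so their mutual distance is at least $\dist(\mult x_I,\mult x_{I^c})-2t\ge0$ (this is where $t\le\dist(\mult x_I,\mult x_{I^c})/2$ enters). Applying $\EDC(a,\hat{a})$ to the $(p+l)$-point configuration split along these two groups, together with monotonicity of $Φ$, yields $C^{p+l}(p+l)!^a Φ(\dist(\mult x_I,\mult x_{I^c})-2t)$. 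Next, since each factor of $\tilde{ψ}^!$ involves the truncated score and thus depends only on the at most $p+l$ configuration points lying within radius $t$, I would apply $\PG(γ_1,γ_2)$ with radius $t$ to obtain $\abs[\big]{\tilde{ψ}^!\tilde{ψ}^!}\le(C\max\set{1,t}^{γ_1})^{\abs{\mult k}}(p+l)^{γ_2\abs{\mult k}}$. Finally, bounding $\Vol(\cup_{i∈I}B_{t,n}(x_i))\le\abs{I}ϑ_d t^d$ and summing the volume factors against $\frac{1}{j!(l-j)!}$ via the binomial theorem collapses the $j$-sum to $\frac{1}{l!}(pϑ_d t^d)^l$, while the $2^l$ sign terms contribute only a harmless factor $2^l$.

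Assembling these estimates, the $l$-th term is controlled by $\frac{(pϑ_d t^d)^l}{l!}2^l(C\max\set{1,t}^{γ_1})^{\abs{\mult k}}(p+l)^{γ_2\abs{\mult k}}C^{p+l}(p+l)!^a Φ(\dist(\mult x_I,\mult x_{I^c})-2t)$. I would then use $(p+l)!^a\le2^{a(p+l)}p!^a l!^a$ to split off $p!^a$, and the elementary estimates $(p+l)^{γ_2\abs{\mult k}}\le\abs{\mult k}^{γ_2\abs{\mult k}}e^{γ_2 l}$ (from $p\le\abs{\mult k}$ and $(1+l/\abs{\mult k})^{\abs{\mult k}}\le e^l$) and $\abs{\mult k}^{γ_2\abs{\mult k}}\le C^{\abs{\mult k}}\abs{\mult k}!^{γ_2}$ (from $n^n\le e^n n!$) to extract the target factor $\abs{\mult k}!^{γ_2}$, leaving only a factor $e^{γ_2 l}$ in the summation variable. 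What remains in $l$ is $\sum_l\frac{l!^a}{l!}s^l$ with $s$ of order $pt^d$, which by \cref{lem:TouchardSum} (with $ν=0$) is at most a constant times $e^{2s^{1/(1-a)}}\le e^{C\abs{\mult k}^{1/(1-a)}t^{d/(1-a)}}$, again using $p\le\abs{\mult k}$. The leftover $\max\set{1,t}^{γ_1\abs{\mult k}}$ is absorbed into this same exponential, since $\log t$ grows more slowly than any positive power of $t$; this produces exactly the claimed bound.

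The main obstacle is tracking the dependence on $\abs{\mult k}$ — the order of the moment — rather than merely on $n$ or $p$, which is precisely what was unnecessary for the central limit theorem in \cite{BYY19GeometricStatistics}. Concretely, the delicate point is that the power-growth factor $(p+l)^{γ_2\abs{\mult k}}$ couples the moment order $\abs{\mult k}$ to the summation index $l$, and one must decouple them (via $(p+l)^{γ_2\abs{\mult k}}\le\abs{\mult k}^{γ_2\abs{\mult k}}e^{γ_2 l}$) so that the $l$-sum reduces to the Touchard-type series of \cref{lem:TouchardSum} and produces the correct stretched-exponential factor $e^{C\abs{\mult k}^{1/(1-a)}t^{d/(1-a)}}$, whose exponent $\frac{1}{1-a}$ — finite exactly because $a<1$ — is what ultimately governs the value of $γ$ in the main theorems.
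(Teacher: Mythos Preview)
Your proposal is correct and follows essentially the same route as the paper: start from the factorial moment expansion of \cref{lem:FMEexpansion}, bound the $\tilde{ψ}^!$ factors via $\PG(γ_1,γ_2)$, the correlation difference via $\EDC(a,\hat{a})$, integrate out the volumes, split $(p+l)!^a\le 2^{a(p+l)}p!^a l!^a$, apply \cref{lem:TouchardSum}, and finally absorb $\max\{1,t\}^{γ_1\abs{\mult k}}$ into the exponential. The one noteworthy difference is your decoupling step: the paper writes $(p+l)^{γ_2\abs{\mult k}}\le 2^{γ_2\abs{\mult k}}\bigl(p^{γ_2\abs{\mult k}}+l^{γ_2\abs{\mult k}}\bigr)$ and then must invoke \cref{lem:TouchardSum} with $ν=\ceil{γ_2\abs{\mult k}}$ for the second term, whereas your bound $(p+l)^{γ_2\abs{\mult k}}\le\abs{\mult k}^{γ_2\abs{\mult k}}e^{γ_2 l}$ (using $p\le\abs{\mult k}$ and $(1+l/\abs{\mult k})^{\abs{\mult k}}\le e^l$) pushes the $l$-dependence into a harmless geometric factor and only needs the $ν=0$ case of \cref{lem:TouchardSum}. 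This is a slightly cleaner variant of the same argument.
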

\begin{proof}
	Firstly, notice that the $(γ_1,γ_2)$-power growth condition yields
	\begin{equation*}
		\abs[\bigg]{\tilde{ψ}_{\mult k}^! \biggl(\mult x_I,\sum_{i∈J_1} δ_{y_i}\biggr)}
		= \prod_{i∈I}\abs[\bigg]{ξ\biggl(x_i;\sum_{j∈J_1}δ_{y_j} + \sum_{j∈I}δ_{x_j}\biggr)}^{k_i}
		\le C_1^{\abs{\mult k_I}} \max\set[\big]{1,t}^{γ_1\abs{\mult k_I}} \bigl(j+\abs{I}\bigr)^{γ_2\abs{\mult k_I}}
	\end{equation*}
	and similarly
	\begin{equation*}
		\abs[\bigg]{\tilde{ψ}_{\mult k}^! \biggl(\mult x_{I^c};\sum_{i∈J_2}δ_{z_i}\biggr)}
		\le C_1^{\abs{\mult k_{I^c}}} \max\set[\big]{1,t}^{γ_1\abs{\mult k_{I^c}}} \bigl(l-j+\abs{I^c}\bigr)^{γ_2\abs{\mult k_{I^c}}}
	\end{equation*}
	for some constant $C_1>1$.
	Summing over all subsets $J_1$ and $J_2$ thus implies
	\begin{align*}
		\MoveEqLeft \abs[\Bigg]{\sum_{\substack{J_1\subseteq \set{1,…,j}\\J_2\subseteq \set{1,…,l-j}}} (-1)^{l-\abs{J_1}-\abs{J_2}} \tilde{ψ}_{\mult k}^! \biggl(\mult x_I;\sum_{i∈J_1}δ_{y_i}\biggr) \tilde{ψ}_{\mult k}^! \biggl(\mult x_{I^c};\sum_{i∈J_2}δ_{z_i}\biggr)}\\
		&\le 2^l C_1^{\abs{\mult k}} \max\set[\big]{1,t}^{γ_1\abs{\mult k}} \bigl(j+\abs{I}\bigr)^{γ_2\abs{\mult k_I}} \bigl(l-j+\abs{I^c}\bigr)^{γ_2\abs{\mult k_{I^c}}}.
	\end{align*}
	From the choice of $\mult y$ and $\mult z$, the exponentially fast decay of correlations of $\cP$ implies
	\begin{align*}
		\MoveEqLeft \abs[\big]{ρ^{(p+l)}(\mult x,\mult y,\mult z) - ρ^{(\abs{I}+j)}(\mult x_I,\mult y) ρ^{(\abs{I^c}+l-j)}(\mult x_{I^c},\mult z)}\\
		&\le C_2^{p+l} (p+l)!^a Φ\bigl(\dist((\mult x_I,\mult y),(\mult x_{I^c},\mult z))\bigr)\\
		&\le C_2^{p+l} (p+l)!^a Φ\bigl(\dist(\mult x_I,\mult x_{I^c})-2t\bigr)
	\end{align*}
	for some constant $C_2>1$. Recall that $ϑ_d$ denotes the volume of the $d$-dimensional unit ball. Combining the above-mentioned estimates with \cref{lem:FMEexpansion} allows us to conclude
	\begin{align*}
		\MoveEqLeft \abs[\big]{\tilde{m}_{\mult k}(\mult x;n) - \tilde{m}_{\mult k_I}(\mult x_I;n) \tilde{m}_{\mult k_{I^c}}(\mult x_{I^c};n)}\\
		&= \Biggl| \sum_{l=0}^∞\sum_{j=0}^l \frac{1}{j!(l-j)!} \int_{(\cup_{i∈I} B_{t,n}(x_i))^j} \dif \mult y \int_{(\cup_{i∈I^c} B_{t,n}(x_{i}))^{l-j}} \dif \mult z\\
		&\qquad \sum_{\substack{J_1\subseteq \set{1,…,j}\\J_2\subseteq \set{1,…,l-j}}} (-1)^{l-\abs{J_1}-\abs{J_2}} \tilde{ψ}_{\mult k}^! \biggl(\mult x_I;\sum_{i∈J_1}δ_{y_i}\biggr) \tilde{ψ}_{\mult k}^! \biggl(\mult x_{I^c};\sum_{i∈J_2}δ_{z_i}\biggr)\\
		&\qquad \Bigl( ρ^{(\abs{I}+p)}(\mult x,\mult y,\mult z) - ρ^{(\abs{I}+j)}(\mult x_I,\mult y)ρ^{(\abs{I^c}+l-j)}(\mult x_{I^c},\mult z) \Bigr) \Biggr|\\
		&\le \sum_{l=0}^∞\sum_{j=0}^l \frac{1}{j!(l-j)!} 2^l C_1^{\abs{\mult k}} \max\set[\big]{1,t}^{γ_1\abs{\mult k}} \bigl(j+\abs{I}\bigr)^{γ_2\abs{\mult k_I}} \bigl(l-j+\abs{I^c}\bigr)^{γ_2\abs{\mult k_{I^c}}}\\
		&\qquad C_2^{p+l} (p+l)!^a Φ\bigl(\dist(\mult x_I,\mult x_{I^c})-2t\bigr) \bigl(\abs{I}ϑ_dt^d\bigr)^j \bigl(\abs{I^c}ϑ_dt^d\bigr)^{l-j}\\
		&\le C_1^{\abs{\mult k}} C_2^p \max\set[\big]{1,t}^{γ_1\abs{\mult k}} Φ\bigl(\dist(\mult x_I,\mult x_{I^c})-2t\bigr)*\\
		&\qquad \sum_{l=0}^∞ \sum_{j=0}^l \frac{(2C_2)^l (p+l)!^a (p+l)^{γ_2\abs{\mult k}}}{j!(l-j)!} \bigl(\abs{I}ϑ_dt^d\bigr)^j \bigl(\abs{I^c}ϑ_dt^d\bigr)^{l-j}\\
		&\le C_1^{\abs{\mult k}} C_2^p \max\set[\big]{1,t}^{γ_1\abs{\mult k}} Φ\bigl(\dist(\mult x_I,\mult x_{I^c})-2t\bigr) \sum_{l=0}^∞ \frac{(2C_2ϑ_dpt^d)^l (p+l)!^a (p+l)^{γ_2\abs{\mult k}}}{l!}.
	\end{align*}
	Apply now \cref{lem:TouchardSum} to estimate the sum. Use $(p+l)!^a \le 2^{a(p+l)} p!^a l!^a$ and $(p+l)^{γ_2\abs{\mult k}} \le 2^{γ_2\abs{\mult k}} \bigl(p^{γ_2\abs{\mult k}} + l^{γ_2\abs{\mult k}}\bigr)$ to bound the sum as follows:
	\begin{align*}
		\MoveEqLeft 2^{ap+γ_2\abs{\mult k}} \sum_{l=0}^∞ \frac{p!^a l!^a (p^{γ_2\abs{\mult k}} + l^{γ_2\abs{\mult k}})}{l!} \bigl(2^{1+a}C_2ϑ_dpt^d\bigr)^l\\
		&\le 2*2^{ap+γ_2\abs{\mult k}} p!^a \biggl( \frac{ep^{γ_2\abs{\mult k}}}{(1-a)} e^{(4C_2ϑ_dpt^d)^\frac{1}{1-a}} + \frac{e^{1+\ceil{γ_2\abs{\mult k}}}(1+\ceil{γ_2\abs{\mult k}})!}{(1-a)^{1+\ceil{γ_2\abs{\mult k}}}} e^{(4C_2ϑ_dpt^d)^\frac{1}{1-a}} \biggr).
	\end{align*}
	Next, observe that $\ceil{1+γ_2\abs{\mult k}}! \le C_3^{\abs{\mult k}} \abs{\mult k}!^{γ_2}$ and $p^{γ_2\abs{\mult k}} \le \abs{\mult k}!^{γ_2}$ for some constant $C_3\ge1$. By combining the estimate for the sum with the estimate above, we therefore proved the existence of a constant $C\ge1$ (independent of $p$, $\mult k$, $\mult x$, $n$ and $I$) such that
	\begin{align*}
		\MoveEqLeft \abs[\big]{\tilde{m}_{\mult k}(\mult x;n) - \tilde{m}_{\mult k_I}(\mult x_I;n) \tilde{m}_{\mult k_{I^c}}(\mult x_{I^c};n)}\\
		&\le C^{\abs{\mult k}} p!^a \abs{\mult k}!^{γ_2} \max\set[\big]{1,t}^{γ_1\abs{\mult k}} Φ\bigl(\dist(\mult x_I,\mult x_{I^c})-2t\bigr) e^{Cp^\frac{1}{1-a}t^\frac{d}{1-a}}.
	\end{align*}
	To be able to conclude, we need to distinguish between the two cases $t\le1$ and $t\ge1$. In the first one the statement is already proven. For the latter one use $t\le e^{t^d}$ and hence $t^{γ_1\abs{\mult k}} \le e^{γ_1\abs{\mult k}t^d} \le e^{γ_1\abs{\mult k}^\frac{1}{1-a}t^\frac{d}{1-a}}$ to finish the proof.
\end{proof}
We are now ready to prove the explicit fast decay of correlation functions for the $ξ$-weighted measure.
\begin{proposition}\label{thm:FastDecayXiWeightedMeasure}
	Assume that the point process $\cP$ satisfies $\EDC(a,\hat{a})$ and that the score function $ξ$ satisfies $\ST(b)$, $\MG(β)$ and $\PG(γ_1,γ_2)$. Then, there exist constants $c>0$ and $C\ge1$ such that for all $p∈ℕ$, $\mult k∈ℕ^p$, distinct $\mult x∈(ℝ^d)^p$, $n∈ℕ$, $\emptyset \ne I\subsetneq \set{1,…,p}$ and $N∈ℕ$ it holds
	\begin{align*}
		\MoveEqLeft \abs[\big]{m_{\mult k}(\mult x;n) - m_{\mult k_I}(\mult x_I;n)m_{\mult k_{I^c}}(\mult x_{I^c};n)}\\
		&\le C^{\abs{\mult k} + N} p!^α \abs{\mult k}^{N\max\set{\frac{1}{\hat{a}}+\frac{a}{d},\frac{1}{d}}} \abs{\mult k}!^β N!^b \Bigl(\max\set[\Big]{\frac{\dist(\mult x_I,\mult x_{I^c})}{3},1}\Bigr)^{-N\min\set{1,\frac{(1-a)\hat{a}}{d}}}\\
		&\qquad + C^{\abs{\mult k}} p!^a \abs{\mult k}!^{γ_2} e^{-c(\max\set{\frac{\dist(\mult x_I,\mult x_{I^c})}{3},1})^{\hat{a}}}.
	\end{align*}
\end{proposition}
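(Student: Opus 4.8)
The plan is to combine \cref{lem:BoundM} and \cref{lem:BoundMTilde} and then optimise over the truncation level $t$, the only free parameter shared by the two estimates. Write $D = \dist(\mult x_I, \mult x_{I^c})$ (which is positive since $\mult x$ is distinct) and set $τ = \max\set{D/3,1}$; note that $\EDC(a,\hat a)$ supplies $\BC(α)$ with $α\le a$, so that \cref{lem:BoundM} is applicable. First I would dispose of the short-range regime $D < 3$ (so $τ=1$): here \cref{eq:BddXiWeightedMeasure} together with $\abs{m_{\mult k} - m_{\mult k_I}m_{\mult k_{I^c}}} \le \abs{m_{\mult k}} + \abs{m_{\mult k_I}}\abs{m_{\mult k_{I^c}}}$ gives a bound of the form $C^{\abs{\mult k}} p!^α \abs{\mult k}!^β$, which is dominated by the first claimed summand for every $N$ (the factor $τ^{-N\,\cdots}$ equals $1$ and $\abs{\mult k}^{N\,\cdots}\ge 1$, and that summand is increasing in $N$). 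It then remains to treat $D \ge 3$, where $τ = D/3 \ge 1$.

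For $D \ge 3$ I would apply \cref{lem:BoundM} (which imposes no restriction on $t$) to split off the truncation error, and then \cref{lem:BoundMTilde} on the remaining truncated difference. Provided $t \le D/2$, this gives
\begin{equation*}
	\abs[\big]{m_{\mult k}(\mult x;n) - m_{\mult k_I}(\mult x_I;n)m_{\mult k_{I^c}}(\mult x_{I^c};n)}
	\le C^{\abs{\mult k}+N} p!^α \abs{\mult k}!^β \frac{N!^b}{t^N}
	+ C^{\abs{\mult k}} p!^a \abs{\mult k}!^{γ_2}\, Φ(D-2t)\, e^{C\abs{\mult k}^{\frac{1}{1-a}} t^{\frac{d}{1-a}}}.
\end{equation*}
The truncation term wants $t$ as large as possible (to make $t^{-N}$ small and produce decay in $D$), while the factorial-moment term wants $t$ small (to keep the inflating exponential below the decay of $Φ$). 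Calibrating $t$ against these opposing demands is the crux.

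The key choice is $t = ε\,τ^{\min\set{1,\frac{(1-a)\hat a}{d}}}\abs{\mult k}^{-\max\set{\frac{1}{\hat a}+\frac{a}{d},\frac{1}{d}}}$ for a small absolute constant $ε\in\intoo{0,1}$ to be fixed. Since $τ\ge 1$, $\abs{\mult k}\ge 1$ and the $τ$-exponent is $\le 1$, one gets $t \le ετ \le D/3 \le D/2$, so \cref{lem:BoundMTilde} applies and moreover $D - 2t \ge D/3 = τ$, whence $Φ(D-2t) \le Ce^{-cτ^{\hat a}}$. For the inflating exponential I would compute $\abs{\mult k}^{\frac{1}{1-a}} t^{\frac{d}{1-a}}$ directly: when $\frac{(1-a)\hat a}{d}\le 1$ the $τ$-exponent collapses to $\hat a$ and the $\abs{\mult k}$-exponent equals $1-\frac{d}{\hat a(1-a)}\le 0$, giving a factor $\le ε^{\frac{d}{1-a}}τ^{\hat a}$; when $\frac{(1-a)\hat a}{d}\ge 1$ the $\abs{\mult k}$-powers cancel and the $τ$-exponent is $\frac{d}{1-a}\le \hat a$, again giving $\le ε^{\frac{d}{1-a}}τ^{\hat a}$. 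Fixing $ε$ so small that $Cε^{\frac{d}{1-a}}\le \frac{c}{2}$ (legitimate, as $ε$ is ours to choose while $c,C$ are fixed) turns $Φ(D-2t)\,e^{C\abs{\mult k}^{1/(1-a)}t^{d/(1-a)}}$ into $\le Ce^{-\frac{c}{2}τ^{\hat a}}$, the claimed second summand after renaming $c$. Finally $t^{-N} = ε^{-N}τ^{-N\min\set{1,(1-a)\hat a/d}}\abs{\mult k}^{N\max\set{1/\hat a+a/d,1/d}}$, and absorbing $ε^{-N}$ into $C^N$ reproduces the first claimed summand exactly.

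The main obstacle is precisely this calibration of $t$: one must single out the scaling $τ^{\min\set{1,(1-a)\hat a/d}}\abs{\mult k}^{-\max\set{1/\hat a+a/d,1/d}}$ that simultaneously keeps $t\le D/2$, keeps $D-2t$ of order $τ$, and forces $e^{C\abs{\mult k}^{1/(1-a)}t^{d/(1-a)}}$ below half the decay rate of $Φ$ — the last point being where the small prefactor $ε$ is indispensable for defeating the a priori unrelated constants coming from $\EDC(a,\hat a)$ and from \cref{lem:BoundMTilde}. The two regimes in the exponents are then the unavoidable record of whether $t^{d/(1-a)}$ or $(D-2t)^{\hat a}$ is the dominant power, that is, of the sign of $\frac{(1-a)\hat a}{d}-1$.
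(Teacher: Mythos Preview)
Your argument is correct and follows essentially the same route as the paper: dispose of the short-range case via \eqref{eq:BddXiWeightedMeasure}, combine \cref{lem:BoundM} with \cref{lem:BoundMTilde}, and then calibrate the truncation level $t$ according to the sign of $\frac{(1-a)\hat a}{d}-1$. Your single parameterisation $t = \varepsilon\,\tau^{\min\{1,(1-a)\hat a/d\}}\abs{\mult k}^{-\max\{1/\hat a+a/d,1/d\}}$ with a small prefactor $\varepsilon$ is a slight streamlining of the paper's case-by-case choices (the paper instead swaps the $\abs{\mult k}$-exponents between the two regimes and, for $(1-a)\hat a\ge d$, handles the competing exponentials by locating the maximum of $s\mapsto -cs^{\hat a}+C_1\abs{\mult k}^{1/(1-a)-d/((1-a)\hat a)-a/(1-a)}s^{d/(1-a)}$), but the substance is the same.
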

\begin{proof}
	If $\dist(\mult x_I,\mult x_{I^c}) \le 3$, apply \cref{eq:BddXiWeightedMeasure} to obtain
	\begin{equation*}
		\abs[\big]{m_{\mult k}(\mult x;n) - m_{\mult k_I}(\mult x_I;n)m_{\mult k_{I^c}}(\mult x_{I^c};n)}
		\le C^{\abs{\mult k}} p!^α \abs{\mult k}!^β
	\end{equation*}
	for some constant $C\ge 1$.
	
	From now on, assume that $\dist(\mult x_I,\mult x_{I^c}) \ge 3$. By applying \cref{lem:BoundM,lem:BoundMTilde}, there exists a constant $C_1\ge1$ such that
	\begin{align*}
		\MoveEqLeft \abs[\big]{m_{\mult k}(\mult x;n) - m_{\mult k_I}(\mult x_I;n)m_{\mult k_{I^c}}(\mult x_{I^c};n)}\\
		&\le C_1^{\abs{\mult k} + N} p!^α \abs{\mult k}!^β \frac{N!^b}{t^N} + C_1^{\abs{\mult k}} p!^a \abs{\mult k}!^{γ_2} Φ\bigl(\dist(\mult x_I,\mult x_{I^c})-2t\bigr) e^{C_1\abs{\mult k}^\frac{1}{1-a}t^\frac{d}{1-a}}.
	\end{align*}
	Observe that $\frac{(1-a)\hat{a}}{d} \ge 1$ if and only if $\frac{1}{d} \ge \frac{1}{\hat{a}} + \frac{a}{d}$. Hence, we can prove the statement by considering the two different cases $(1-a)\hat{a}\le d$ and $(1-a)\hat{a}\ge d$.
	
	Assume first that $(1-a)\hat{a}\le d$. As $Φ$ is exponentially fast decaying, there exist constants $c$ and $C_2$ such that 
	\begin{equation*}
		Φ(s) \le C_2 e^{-2cs^{\hat{a}}}
	\end{equation*}
	for all $s∈ℝ_+$. Without loss of generality we take $c \le C_1$. Define the parameter $t$ by $t = \bigl(\frac{c}{C_1}\bigr)^{\frac{1-a}{d}} \abs{\mult k}^{-\frac{1}{d}}\bigl(\frac{\dist(\mult x_I,\mult x_{I^c})}{3}\bigr)^{\frac{(1-a)\hat{a}}{d}}$. In particular, as $(1-a)\hat{a}\le d$, one obtains the bound $t \le \frac{\dist(\mult x_I,\mult x_{I^c})}{3}$. Thus, $Φ\bigl(\dist(\mult x_I,\mult x_{I^c})-2t\bigr) \le Φ\bigl(\frac{\dist(\mult x_I,\mult x_{I^c})}{3}\bigr)$ and
	\begin{equation*}
		Φ\bigl(\dist(\mult x_I,\mult x_{I^c})-2t\bigr) e^{C_1\abs{\mult k}^\frac{1}{1-a}t^\frac{d}{1-a}}
		\le C_2e^{-c\bigl(\frac{\dist(\mult x_I,\mult x_{I^c})}{3}\bigr)^{\hat{a}}}.
	\end{equation*}
	Evaluating the above-mentioned bound at our choice of $t$ therefore yields
	\begin{align*}
		\MoveEqLeft \abs[\big]{m_{\mult k}(\mult x;n) - m_{\mult k_I}(\mult x_I;n)m_{\mult k_{I^c}}(\mult x_{I^c};n)}\\
		&\le C_1^{\abs{\mult k} + N} \biggl(\frac{c}{C_1}\biggr)^{\frac{(1-a)N}{d}} p!^α \abs{\mult k}^\frac{N}{d} \abs{\mult k}!^β N!^b \biggl(\frac{\dist(\mult x_I,\mult x_{I^c})}{3}\biggr)^{-N\frac{(1-a)\hat{a}}{d}}\\
		&\qquad + C_2C_1^{\abs{\mult k}} p!^a \abs{\mult k}!^{γ_2} e^{-c\bigl(\frac{\dist(\mult x_I,\mult x_{I^c})}{3}\bigr)^{\hat{a}}}.
	\end{align*}
	This proves the claim in case of $(1-a)\hat{a}\le d$.
	
	Now, consider the second case where $(1-a)\hat{a} \ge d$. Choose $t = \abs{\mult k}^{-\frac{1}{\hat{a}}-\frac{a}{d}}\bigl(\frac{\dist(\mult x_I,\mult x_{I^c})}{3}\bigr)$. Clearly it holds that $t\le \frac{\dist(\mult x_I,\mult x_{I^c})}{3}$, and hence $Φ\bigl(\dist(\mult x_I,\mult x_{I^c})-2t\bigr) \le Φ\bigl(\frac{\dist(\mult x_I,\mult x_{I^c})}{3}\bigr) \le C_2e^{-2c\bigl(\frac{\dist(\mult x_I,\mult x_{I^c})}{3}\bigr)^{\hat{a}}}$. Denote the maximum of $s\mapsto -cs^{\hat{a}} + C_1\abs{\mult k}^{\frac{1}{1-a}}\abs{\mult k}^{-\frac{d}{(1-a)\hat{a}} -\frac{a}{1-a}} s^\frac{d}{1-a}$ within $ℝ_+$ by $s_0$. The maximum is unique and it turns out that one can bound $-cs_0^{\hat{a}} + C_1\abs{\mult k}^{\frac{1}{1-a}}\abs{\mult k}^{-\frac{d}{(1-a)\hat{a}} -\frac{a}{1-a}} s_0^\frac{d}{1-a} \le C_3 \abs{\mult k}$ for some constant $C_3>1$. Hence, we conclude that in this case
	\begin{align*}
		\MoveEqLeft \abs[\big]{m_{\mult k}(\mult x;n) - m_{\mult k_I}(\mult x_I;n)m_{\mult k_{I^c}}(\mult x_{I^c};n)}\\
		&\le C_1^{\abs{\mult k} + N} p!^α \abs{\mult k}!^β \frac{N!^b}{t^N} + C_1^{\abs{\mult k}} p!^a \abs{\mult k}!^{γ_2} Φ\bigl(\dist(\mult x_I,\mult x_{I^c})-2t\bigr) e^{C_1\abs{\mult k}^\frac{1}{1-a}t^\frac{d}{1-a}}\\
		&\le C^{\abs{\mult k} + N} p!^α \abs{\mult k}!^β \abs{\mult k}^{\frac{N}{\hat{a}}+\frac{aN}{d}} N!^b \biggl(\frac{\dist(\mult x_I,\mult x_{I^c})}{3}\biggr)^{-N}\\
		&\qquad + C^{\abs{\mult k}} p!^a \abs{\mult k}!^{γ_2} e^{-c\bigl(\frac{\dist(\mult x_I,\mult x_{I^c})}{3}\bigr)^{\hat{a}}}
	\end{align*}
	for $C$ chosen sufficiently large. This finishes the proof.
\end{proof}


\subsection{Proof of the deviation results}

We now exploit the fast decay of correlations for the $ξ$-weighted measure $μ_n^ξ$ proven in \cref{thm:FastDecayXiWeightedMeasure} to conclude an explicit bound on cumulants.

Let us first introduce some further notation: By $\cQ(I)$ we denote the set of all set partitions of $I\subseteq ℕ$. In case of $I=\set{1,…,p}$ for some $p∈ℕ$, we also write $\cQ_p$ for short. Set partitions will be usually denoted by capital Greek letters $Π,Σ,…$ and their parts by the corresponding small Greek letters $π,σ,…$, possibly with indices. Sometimes we need to work with ordered set partitions, in particular in \cref{lem:ClusteringCumu}. To do so, it is necessary that we always assume the parts of such a partition to be ordered according to their smallest element. Moreover, we use $\prec$ to denote refinements of set partitions. Finally, we denote by $\cS_p$ the symmetric group on $\set{1,…,p}$.

We are now ready to state the general clustering result for cumulants. A similar but less explicit result can be found in \cite{BESY08MDPGeometricProbability,ERS15MDPForStabilizingFunctionals}, for instance.
\begin{lemma}\label{lem:ClusteringCumu}
	Consider a family of numbers (moments) $(m_I)_{I\subseteq \set{1,…,p}}$ and define the corresponding cumulants by
	\begin{equation*}
		κ_I = \sum_{Π∈\cQ(I)} (-1)^{\abs{Π}-1}\bigl(\abs{Π}-1\bigr)!\prod_{π∈Π} m_π.
	\end{equation*}
	For disjoint sets $I,J\subseteq \set{1,…,p}$, denote by $δ_{I,J} = m_{I\cup J} - m_I m_J$ the moment clusters. Then, for any $\emptyset\ne I\subsetneq\set{1,…,p}$ with $1∈I$ it holds
	\begin{equation*}
		κ_{\set{1,…,p}} = \sum_{Π\prec \set{I,I^c}} \sum_{\substack{τ∈\cS_{\abs{Π}}\\τ(1)=1}} (-1)^{\abs{Π}+\abs{D(Π,τ)}-1} \prod_{(σ_1,σ_2)∈D(Π,τ)} δ_{σ_1,σ_2} \prod_{σ∈M(Π,τ)} m_σ,
	\end{equation*}
	where $Π = \set{π_i}_{i=1,…,\abs{Π}}$ with the $π_i$ ordered according to their smallest element and 
	\begin{align*}
		D(Π,τ) &= \set[\Big]{(σ_1,σ_2) ∈Π^2 \given \exists i∈\set{1,…,p-1}\colon σ_1=π_{τ(i)}\subseteq I\text{ and }σ_2=π_{τ(i+1)}\subseteq I^c},\\
		M(Π,τ) &= \set[\Big]{σ∈Π \given \nexists \tilde{σ}∈Π \colon (\tilde{σ},σ)∈D(Π,τ) \text{ or }(σ,\tilde{σ})∈D(Π,τ)}.
	\end{align*}
	Moreover, $D(Π,τ)$ is non-empty and $M(Π,τ) \uplus D(Π,τ)_1 \uplus D(Π,τ)_2 = Π$, where $D(Π,τ)_i$ denotes the projection on the $i$-th coordinate and $\uplus$ stands for disjoint union.
\end{lemma}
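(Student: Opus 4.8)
The plan is to start from the defining expression
\begin{equation*}
	κ_{\set{1,…,p}} = \sum_{Σ∈\cQ_p} (-1)^{\abs{Σ}-1}\bigl(\abs{Σ}-1\bigr)! \prod_{σ∈Σ} m_σ,
\end{equation*}
to expand the products of moment clusters appearing on the right-hand side of the claimed identity, and to match the two sides coefficient by coefficient in the monomials $\prod_{σ∈Σ}m_σ$. The starting observation is that $(\abs{Σ}-1)!$ counts exactly the permutations $τ∈\cS_{\abs{Σ}}$ with $τ(1)=1$, i.e.\ the orderings of the blocks of $Σ$ in which the block containing the element $1$ is placed first. It therefore suffices to show that the fully expanded right-hand side produces, for every $Σ∈\cQ_p$, the monomial $\prod_{σ∈Σ}m_σ$ with total coefficient $(-1)^{\abs{Σ}-1}(\abs{Σ}-1)!$.

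First I would expand every factor $δ_{σ_1,σ_2} = m_{σ_1\cup σ_2} - m_{σ_1}m_{σ_2}$ in a fixed summand indexed by $(Π,τ)$. Choosing for each pair in $D(Π,τ)$ either the merged term $m_{σ_1\cup σ_2}$ or the split term $-m_{σ_1}m_{σ_2}$ amounts to selecting a subset $E\subseteq D(Π,τ)$ of pairs to be merged. Merging these $\abs{E}$ crossing pairs coarsens $Π$ into a partition $Σ$ with $\abs{Σ}=\abs{Π}-\abs{E}$ blocks, and the surviving product of moments is precisely $\prod_{σ∈Σ}m_σ$. For the signs, the prefactor $(-1)^{\abs{Π}+\abs{D(Π,τ)}-1}$ together with the $(-1)^{\abs{D(Π,τ)}-\abs{E}}$ contributed by the split pairs yields the net sign $(-1)^{\abs{Π}-\abs{E}-1}=(-1)^{\abs{Σ}-1}$, which depends only on $Σ$. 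Hence the expanded right-hand side equals $\sum_{Σ∈\cQ_p}(-1)^{\abs{Σ}-1}N(Σ)\prod_{σ∈Σ}m_σ$, where $N(Σ)$ is the number of triples $(Π,τ,E)$ with $Π\prec\set{I,I^c}$, $τ(1)=1$ and $E\subseteq D(Π,τ)$ that merge to $Σ$.

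The heart of the argument, and the step I expect to be the main obstacle, is to show $N(Σ)=(\abs{Σ}-1)!$. Given the target $Σ$, I would first argue that the pair $(Π,E)$ is forced: every block of $Σ$ lying entirely in $I$ or entirely in $I^c$ must already be a block of $Π$, while each block of $Σ$ straddling $I$ and $I^c$ must be the union of exactly one $I$-block and one $I^c$-block of $Π$ merged through $E$ — this uses that each $δ$ merges only two blocks. Thus $Π$ is the partition obtained from $Σ$ by splitting every straddling block into its intersections with $I$ and with $I^c$, and $E$ is the set of these splits. It then remains to count the orderings $τ$ with $τ(1)=1$ for which every pair of $E$ is consecutive with its $I$-block immediately preceding its $I^c$-block, equivalently $E\subseteq D(Π,τ)$. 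Gluing each of the $\abs{E}$ pairs into a single unit leaves $\abs{Σ}$ units, one of which (the one containing $1∈I$) is pinned to the front by $τ(1)=1$, while the remaining $\abs{Σ}-1$ units are arbitrary; this gives $N(Σ)=(\abs{Σ}-1)!$ and completes the identity.

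Finally I would dispatch the two supplementary assertions. For $M(Π,τ)\uplus D(Π,τ)_1\uplus D(Π,τ)_2=Π$ it suffices to note that $D(Π,τ)$ is a partial matching: a block occupying a given position of the ordering can be a first coordinate only if it lies in $I$ and a second coordinate only if it lies in $I^c$, so no block plays both roles, and each position pairs with a unique neighbour. For the non-emptiness of $D(Π,τ)$, observe that $τ(1)=1$ forces the ordering to begin with the block $σ_1∋1\subseteq I$, whereas $I^c\ne\emptyset$ guarantees that some block lies in $I^c$; scanning to the first $I^c$-block, its immediate predecessor is necessarily an $I$-block, producing a crossing pair. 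These last two claims, together with the algebraic expansion of the first two paragraphs, are comparatively routine, so the combinatorial count $N(Σ)=(\abs{Σ}-1)!$ with its collapsing sign remains the crux of the proof.
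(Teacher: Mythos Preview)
Your proposal is correct and rests on the same underlying combinatorial bijection as the paper, namely the correspondence between ordered partitions $(Σ,ρ)$ of $\set{1,\dots,p}$ and triples $(Π,τ,E)$ with $Π\prec\set{I,I^c}$, $τ(1)=1$ and $E\subseteq D(Π,τ)$, given by merging the pairs in $E$ (equivalently, taking the $I$-refinement of $(Σ,ρ)$). The difference is only the direction in which you run the argument. The paper starts from the cumulant side: it writes $κ_{\set{1,\dots,p}}$ as a sum over ordered set partitions $(Σ,ρ)$, associates to each its unique $I$-refinement $(Π,τ)$, and then observes that for fixed $(Π,τ)$ the $2^{\abs{D(Π,τ)}}$ coarsenings collapse algebraically into the product $\prod δ\prod m$. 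You instead expand the product side and match coefficients, which forces you to isolate and prove the count $N(Σ)=(\abs{Σ}-1)!$ as a separate lemma. Your gluing argument for this count is exactly the inverse of the paper's $I$-refinement map, so nothing is lost or gained mathematically; the paper's framing simply absorbs the count into the bijection and is therefore somewhat shorter.
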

\begin{remark}
	The condition $1∈I$ can always be enforced by considering $I^c$ instead of $I$ if necessary. The two sums in the prior lemma can also be viewed as ordered set partitions (set compositions) with each part being contained in either $I$ or $I^c$. Any summand can be thought of as corresponding to one ordered set partition with a cluster term appearing whenever two consecutive parts belong to $I$ and $I^c$, respectively. If not, the part appears as a moment term.
\end{remark}
\begin{remark}
	Clustering the terms appearing within the product over $M(Π,τ)$ further might enable us to reduce the parameter $γ$ in the bound on cumulants in \cref{thm:BoundCumulants}. For example, for $n=5$ and $I=\set{1,2,3}$ we might exploit the further clustering of the two summands $δ_{3,45}m_{1}m_{2}$ and $-δ_{3,45}m_{12}$ in order to obtain the single summand $-δ_{1,2}δ_{3,45}$. Details may appear in a forthcoming work.
\end{remark}
\begin{proof}
	Note that
	\begin{equation*}
		κ_{\set{1,…,p}} = \sum_{Π∈\cQ_p} \sum_{\substack{τ∈\cS_{\abs{Π}}\\τ(1)=1}} (-1)^{\abs{Π}-1} \prod_{π∈Π} m_π.
	\end{equation*}
	Think of a pair $(Π,τ)$ as an ordered set partition $(π_1,…,π_k)$ for some $k∈ℕ$ such that $1∈π_1$. To any such ordered set partition associate its $I$-refinement defined by $(π_1\cap I,π_1\cap I^c,…,π_k\cap I,π_k\cap I^c)$. Observe that any ordered set partition has a unique $I$-refinement which is given by pairs $(Π,τ)$ with $Π\prec \set{I,I^c}$, $τ∈\cS_{\abs{Π}}$, $τ(1)=1$. Hence,
	\begin{equation*}
		κ_{\set{1,…,p}}
		= \sum_{Π\prec\set{I,I^c}} \sum_{\substack{τ∈\cS_{\abs{Π}}\\τ(1)=1}} \sum_{\substack{Σ∈\cQ_n, ρ∈\cS_{\abs{Σ}}\\(Π,τ)\text{ is $I$-refinement of }(Σ,ρ)}} (-1)^{\abs{Σ}-1} \prod_{σ∈Σ} m_σ.
	\end{equation*}
	Given any such pair $(Π,τ)$, $Π\prec\set{I,I^c}$ and $τ∈\cS_{\abs{Π}}$ with $τ(1)=1$, there are exactly $2^l$ ordered set partitions $(Σ,ρ)$, $Σ∈\cQ_p$ and $ρ∈\cS_{\abs{Σ}}$ with $ρ(1)=1$, with $I$-refinement $(Π,τ)$, where $l=\abs{D(Π,τ)}$ denotes the number of consecutive parts in $I$ and $I^c$. These $2^l$ summands can be grouped to obtain
	\begin{equation*}
		\qquad \sum_{\substack{Σ∈\cQ_n, ρ∈\cS_{\abs{Σ}}\\\mathclap{(Π,τ)\text{ is $I$-refinement of }(Σ,ρ)}}} (-1)^{\abs{Σ}-1} \prod_{σ∈Σ} m_σ
		= (-1)^{\abs{Π}+\abs{D(Π,τ)}-1} \prod_{(σ_1,σ_2)∈D(Π,τ)} δ_{σ_1,σ_2} \prod_{σ∈M(Π,τ)} m_σ.
	\end{equation*}
	The proof is hereby concluded.
\end{proof}
\begin{lemma}\label{lem:SumSetPartition}
	For any $p∈ℕ$ and any $c∈ℝ_+$ it holds that
	\begin{equation*}
		\sum_{Π∈\cQ_p} \abs{Π}!^c \prod_{π∈Π} \abs{π}!^c
		\le 2^p p!^{\max\set{1,c}}.
	\end{equation*}
	The exponent $\max\set{1,c}$ cannot be reduced.
\end{lemma}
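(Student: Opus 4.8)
The plan is to attach to each set partition $Π∈\cQ_p$ the single quantity
\[
	a_Π = \abs{Π}!\prod_{π∈Π}\abs{π}!,
\]
and to reduce the whole statement to two elementary facts about it: an exact evaluation of the unweighted sum $\sum_{Π∈\cQ_p}a_Π$, and a uniform pointwise bound $a_Π\le p!$. Once these are available, both cases $c\le1$ and $c\ge1$ follow by a one-line comparison, and optimality is read off from two explicit test partitions.

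First I would prove the combinatorial identity $\sum_{Π∈\cQ_p}a_Π = p!\,2^{p-1}$. The factor $\abs{Π}!$ counts the linear orderings of the blocks of $Π$ and $\prod_π\abs{π}!$ counts the linear orderings inside each block, so the left-hand side enumerates the triples consisting of a set partition together with an ordering of its blocks and an ordering within every block. Concatenating the blocks in the chosen order turns such a triple into a permutation of $\set{1,…,p}$ together with the composition of $p$ recording the consecutive block sizes, and this correspondence is a bijection. Since there are $p!$ permutations and $2^{p-1}$ compositions of $p$, the identity follows.

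The genuine work, and the step I expect to be the main obstacle, is the pointwise bound $a_Π\le p!$. Writing the block sizes as $n_1,…,n_k$ with $k=\abs{Π}$, this is equivalent to $k!\le\binom{p}{n_1,…,n_k}$. Because $n\mapsto n!$ is log-convex, $\prod_i n_i!$ is maximal over compositions of $p$ into $k$ positive parts at the most unequal composition $(p-k+1,1,…,1)$: replacing a pair $(a,b)$ with $a\ge b\ge2$ by $(a+1,b-1)$ multiplies the product by $(a+1)/b\ge1$, so pushing mass to one part only increases it. Hence $\prod_i n_i!\le(p-k+1)!$, and $k!\,(p-k+1)!\le p!$ follows by matching the $k-1$ factors of $k(k-1)\dotsm2$ against those of $p(p-1)\dotsm(p-k+2)$ and using $p\ge k$. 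With both facts in hand the bound splits cleanly: for $c\le1$ one has $a_Π^c\le a_Π$ since $a_Π\ge1$, so the target sum is at most $\sum_Π a_Π=p!\,2^{p-1}\le 2^p p!^{\max\set{1,c}}$; for $c\ge1$ one writes $a_Π^c\le(\max_Σ a_Σ)^{c-1}a_Π\le(p!)^{c-1}a_Π$ and sums to obtain $(p!)^{c-1}p!\,2^{p-1}=2^{p-1}p!^c$, again within $2^p p!^{\max\set{1,c}}$.

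For optimality I would exhibit test partitions showing no smaller exponent of $p!$ survives. When $c\ge1$, the single-block partition alone contributes $1!^c\,p!^c=p!^c$, so any bound of the form $C^p p!^{d}$ forces $d\ge c$. When $c\le1$, every summand satisfies $a_Π^c\ge1$, whence the full sum dominates the Bell number $B_p=\abs{\cQ_p}$; since $\log B_p\sim p\log p\sim\log p!$, no exponent $d<1$ can hold, and $\max\set{1,c}$ is therefore sharp.
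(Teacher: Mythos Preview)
Your argument is correct. The identity $\sum_{Π∈\cQ_p}a_Π=2^{p-1}p!$ via the bijection with pairs (permutation, composition), the pointwise bound $a_Π\le p!$ via log-convexity of the factorial, the case split on $c$, and the optimality discussion all go through as written. One minor remark on the sharpness argument for $c\le1$: invoking the Bell-number asymptotic $\log B_p\sim p\log p$ is fine but slightly heavy; you could equally well observe that already the partitions into $\lfloor p/2\rfloor$ pairs (plus a singleton if $p$ is odd) number $p!/(2^{\lfloor p/2\rfloor}\lfloor p/2\rfloor!)$, which alone defeats any bound $C^p p!^d$ with $d<1$.

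As for comparison with the paper: there is nothing to compare. The paper does not give a proof of this lemma but simply points to \cite[Lemma~3.5]{ERS15MDPForStabilizingFunctionals}. Your write-up is therefore a self-contained replacement for that citation, and the key device---reducing both regimes of $c$ to the single unweighted identity $\sum_Π a_Π=2^{p-1}p!$ together with the uniform bound $a_Π\le p!$---is clean and worth keeping.
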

\begin{proof}
	Compare \cite[Lemma 3.5]{ERS15MDPForStabilizingFunctionals}.
\end{proof}
In analogy to standard cumulants, let us introduce the factorial cumulant density (sometimes also called Ursell function, truncated correlation functions or connected correlation function):
\begin{equation*}
	κ_{\mult k}(\mult x;n) = \sum_{Π∈\cQ_p} (-1)^{\abs{Π}-1}\bigl(\abs{Π}-1\bigr)!\prod_{π∈Π} m_{\mult k_π}(\mult x_π;n).
\end{equation*}
Sometimes, it is also denoted by $m_{\mult k}^\top(\mult x;n)$ instead.
\begin{lemma}\label{lem:BoundFactorialCumulantDensity}
	Assume that the point process $\cP$ satisfies $\EDC(a,\hat{a})$ and that the score function $ξ$ satisfies $\ST(b)$, $\MG(β)$ and $\PG(γ_1,γ_2)$. Then, for every $p∈ℕ$, $\mult k∈ℕ^p$, distinct $\mult x∈(ℝ^d)^p$, $n∈ℕ$, $\emptyset\ne I\subsetneq \set{1,…,p}$ and $N∈ℕ$ it holds
	\begin{align*}
		\abs[\big]{κ_{\mult k}(\mult x;n)}
		&\le C^{\abs{\mult k}} p! \abs{\mult k}!^β C^N \abs{\mult k}^\frac{N}{d} N!^b \biggl(\max\set[\bigg]{\frac{\dist(\mult x_I,\mult x_{I^c})}{3},1}\biggr)^{-N\frac{(1-a)\hat{a}}{d}}\\
		&\qquad +  C^{\abs{\mult k}} p! \abs{\mult k}!^{\max\set{γ_2,β}} e^{-c\max\set[\big]{\frac{\dist(\mult x_I,\mult x_{I^c})}{3},1}^{\hat{a}}}.
	\end{align*}
\end{lemma}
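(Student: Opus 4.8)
The plan is to feed the fast-decay estimate \cref{thm:FastDecayXiWeightedMeasure} into the clustering identity \cref{lem:ClusteringCumu}. First I would regard $κ_{\mult k}(\mult x;n)$ as the cumulant $κ_{\set{1,…,p}}$ attached to the moment family $m_J:=m_{\mult k_J}(\mult x_J;n)$, $J\subseteq\set{1,…,p}$. As $\dist(\mult x_I,\mult x_{I^c})$ is symmetric under exchanging $I$ and $I^c$, I may assume $1∈I$, whence \cref{lem:ClusteringCumu} writes $κ_{\mult k}(\mult x;n)$ as a signed sum over pairs $(Π,τ)$ with $Π\prec\set{I,I^c}$, $τ∈\cS_{\abs{Π}}$ and $τ(1)=1$ of products of moment clusters $δ_{σ_1,σ_2}$ over $D(Π,τ)$ and moments $m_{\mult k_σ}(\mult x_σ;n)$ over $M(Π,τ)$. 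Crucially, $δ_{σ_1,σ_2}$ is exactly the quantity estimated by \cref{thm:FastDecayXiWeightedMeasure}, the lemma guarantees $D(Π,τ)\ne\emptyset$, and every cluster straddles the split, meaning $σ_1\subseteq I$ and $σ_2\subseteq I^c$.

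In each summand I would single out one cluster factor and estimate it by \cref{thm:FastDecayXiWeightedMeasure}, bounding all remaining factors crudely. For the distinguished cluster, $σ_1\subseteq I$ and $σ_2\subseteq I^c$ give $\dist(\mult x_{σ_1},\mult x_{σ_2})\ge\dist(\mult x_I,\mult x_{I^c})$, so — since the exponents are negative — its decay is dominated by the corresponding decay in $\dist(\mult x_I,\mult x_{I^c})$; moreover $\abs{\mult k_{σ_1\cup σ_2}}\le\abs{\mult k}$ and $\abs{σ_1\cup σ_2}\le p$ dominate the $N$-dependent polynomial prefactor and the factorials by the global quantities. Each surviving moment is bounded by \cref{eq:BddXiWeightedMeasure}, and each surviving cluster by the triangle inequality together with \cref{eq:BddXiWeightedMeasure}. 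Since \cref{thm:FastDecayXiWeightedMeasure} is a sum of two terms, this splits each summand into two, yielding the two terms of the assertion: the first retains the polynomial prefactor, the factor $N!^b$ and the polynomial decay, carrying $\mult k$-factorials to the power $β$; the second retains the exponential decay, carrying the power $γ_2$ on the distinguished part and $β$ elsewhere.

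It remains to carry out the factorial bookkeeping. The inequalities $\abs{σ_1\cup σ_2}!\le 2^{\abs{σ_1\cup σ_2}}\abs{σ_1}!\abs{σ_2}!$ and $\abs{\mult k_{σ_1\cup σ_2}}!\le 2^{\abs{\mult k_{σ_1\cup σ_2}}}\abs{\mult k_{σ_1}}!\abs{\mult k_{σ_2}}!$ split every cluster bound into per-part contributions, so up to a factor $C^{\abs{\mult k}}$ each summand is dominated by the decay prefactor times $\prod_{π∈Π}\abs{π}!^α\abs{\mult k_π}!^β$ (with $α,β$ replaced by $a,γ_2$ on the distinguished part in the second term). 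Since the product of factorials is at most the factorial of the sum and the $\abs{\mult k_π}$ sum to $\abs{\mult k}$, one gets $\prod_π\abs{\mult k_π}!^β\le\abs{\mult k}!^β$; on the second term, combining $\abs{\mult k_{σ_1\cup σ_2}}!^{γ_2}$ with the remaining factors of power $β$ produces $\abs{\mult k}!^{\max\set{γ_2,β}}$, which is the source of that exponent. The sum over $τ$ costs at most $(\abs{Π}-1)!\le\abs{Π}!$, and summing over $Π\prec\set{I,I^c}$ — that is, over a partition of $I$ and a partition of $I^c$ — the surviving $\sum_Π\abs{Π}!\prod_π\abs{π}!^α$ is controlled by \cref{lem:SumSetPartition} with $c=1$ (using $α<1$) to be at most $C^p p!$, and $p\le\abs{\mult k}$ absorbs this into $C^{\abs{\mult k}}p!$. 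The main obstacle is exactly this bookkeeping: one must detach precisely one cluster to keep the separation-dependent decay, while arranging that the crude bounds on the other factors, the permutation count $(\abs{Π}-1)!$, and the partition sum together yield only the claimed $p!\,\abs{\mult k}!^β$ (respectively $p!\,\abs{\mult k}!^{\max\set{γ_2,β}}$) growth and not a higher power of $p!$.
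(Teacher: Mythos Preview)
Your proposal is correct and follows essentially the same route as the paper: apply \cref{lem:ClusteringCumu} to the moment family $m_J=m_{\mult k_J}(\mult x_J;n)$, use \cref{thm:FastDecayXiWeightedMeasure} on one of the guaranteed cluster factors $δ_{σ_1,σ_2}$ (which straddles $I$ and $I^c$, so inherits the $\dist(\mult x_I,\mult x_{I^c})$ decay), bound the remaining clusters and moments crudely via \cref{eq:BddXiWeightedMeasure}, split factorials with $(\nu_1+\nu_2)!\le 2^{\nu_1+\nu_2}\nu_1!\nu_2!$, and collapse the partition sum with \cref{lem:SumSetPartition}. The paper organizes the bookkeeping slightly more coarsely (using $\max\set{γ_2,β}$ uniformly rather than only on the distinguished part) but the argument is the same.
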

\begin{proof}
	The idea is to apply the clustering result from \cref{lem:ClusteringCumu}. To be able to do this, define for any two disjoint index sets $\emptyset\ne J_1,J_2\subsetneq\set{1,…,p}$ the moment cluster
	\begin{equation*}
		δ_{\mult k_{J_1},\mult k_{J_2}}\bigl(\mult x_{J_1},\mult x_{J_2};n\bigr)
		= m_{\mult k_{J_1\cup J_2}} \bigl(\mult x_{J_1\cup J_2};n\bigr) - m_{\mult k_{J_1}} \bigl(\mult x_{J_1};n\bigr) m_{\mult k_{J_2}} \bigl(\mult x_{J_2};n\bigr).
	\end{equation*}
	Thus, \cref{thm:FastDecayXiWeightedMeasure} together with $(ν_1+ν_2)! \le 2^{ν_1+ν_2} ν_1! ν_2!$ for all $ν_1,ν_2∈ℕ$ implies that
	\begin{align*}
		\abs[\Big]{δ_{\mult k_{J_1},\mult k_{J_2}} \bigl(\mult x_{J_1},\mult x_{J_2};n\bigr)}
		&\le (4C_1)^{\abs{\mult k_{J_1\cup J_2}} + N} \abs{J_1}!^α \abs{J_2}!^α \abs{\mult k_{J_1\cup J_2}}^{N\max\set[\big]{\frac{1}{\hat{a}}+\frac{a}{d},\frac{1}{d}}} \abs{\mult k_{J_1}}!^β \abs{\mult k_{J_2}}!^β N!^b\\
		&\qquad\qquad * \biggl(\max\set[\bigg]{\frac{\dist(\mult x_{J_1},\mult x_{J_2})}{3},1}\biggr)^{-N\min\set[\big]{1,\frac{(1-a)\hat{a}}{d}}}\\
		&\quad + (4C_1)^{\abs{\mult k_{J_1\cup J_2}}} \abs{J_1}!^a \abs{J_2}!^a \abs{\mult k_{J_1}}!^{γ_2} \abs{\mult k_{J_2}}!^{γ_2} e^{-c\bigl(\max\set[\big]{\frac{\dist(\mult x_{J_1},\mult x_{J_2})}{3},1}\bigr)^{\hat{a}}}
	\end{align*}
	for some constant $C_1\ge1$. Now, fix $\emptyset\ne I\subsetneq\set{1,…,p}$. Then, for any $\emptyset\ne J_1\subseteq I$ and $\emptyset\ne J_2\subseteq I^c$ the bound
	\begin{equation*}
		\dist\bigl(\mult x_{J_1},\mult x_{J_2}\bigr) \ge \dist\bigl(\mult x_I,\mult x_{I^c}\bigr)
	\end{equation*}
	holds. Since $s\mapsto \max\set{\frac{s}{3},1}^{-N\frac{(1-a)\hat{a}}{d}}$ and $s\mapsto e^{-c\max\set{s,1}^{\hat{a}}}$ are both decreasing, one immediately obtain
	\begin{equation*}
		\biggl(\max\set[\bigg]{\frac{\dist(\mult x_{J_1},\mult x_{J_2})}{3},1}\biggr)^{-N\min\set[\big]{1,\frac{(1-a)\hat{a}}{d}}}
		\le \biggl(\max\set[\bigg]{\frac{\dist(\mult x_I,\mult x_{I^c})}{3},1}\biggr)^{-N\min\set[\big]{1,\frac{(1-a)\hat{a}}{d}}}
	\end{equation*}
	and
	\begin{equation*}
		e^{-c\max\set[\big]{\frac{\dist(\mult x_{J_1},\mult x_{J_2})}{3},1}^{\hat{a}}}
		\le e^{-c\max\set[\big]{\frac{\dist(\mult x_I,\mult x_{I^c})}{3},1}^{\hat{a}}}.
	\end{equation*}
	Recall from \cref{eq:BddXiWeightedMeasure} that for any $\emptyset\ne J\subsetneq\set{1,…,p}$
	\begin{equation*}
		\abs[\big]{m_{\mult k_J}\bigl(\mult x_J;n\bigr)}
		\le C^{\abs{\mult k_J}} \abs{J}!^α \abs{\mult k_J}!^β,
	\end{equation*}
	which implies the estimate
	\begin{equation*}
		\abs[\big]{δ_{\mult k_{J_1},\mult k_{J_2}}\bigl(\mult x_{J_1},\mult x_{J_2};n\bigr)}
		\le 2 (4C_1)^{\abs{\mult k_{J_1\cup J_2}}} \abs{J_1}!^α \abs{J_2}!^α \abs{\mult k_{J_1}}!^β \abs{\mult k_{J_2}}!^β.
	\end{equation*}
	We are now ready to apply \cref{lem:ClusteringCumu} with the result of obtaining	\begin{align*}
		\MoveEqLeft[1] \abs[\big]{κ_{\mult k}(\mult x;n)}
		= \abs[\Bigg]{\sum_{Π\prec \set{I,I^c}} \sum_{\substack{τ∈\cS_{\abs{Π}}\\τ(1)=1}} (-1)^{\abs{Π}+\abs{D(Π,τ)}-1} \smashoperator{\prod_{(σ_1,σ_2)∈D(Π,τ)}} δ_{\mult k_{σ_1},\mult k_{σ_2}}\bigl(\mult x_{σ_1},\mult x_{σ_2};n\bigr) \smashoperator{\prod_{σ∈M(Π,τ)}} m_{\mult k_σ}\bigl(\mult x_σ;n\bigr)}\\
		&\le 2^p (4C_1)^{\abs{\mult k}} \sum_{Π\prec \set{I,I^c}} \sum_{\substack{τ∈\cS_{\abs{Π}}\\τ(1)=1}}
		\Biggl( \prod_{π∈Π} \abs{π}!^a \abs{\mult k_π}!^{\max\set{γ_2,β}} e^{-c\max\set[\big]{\frac{\dist(\mult x_I,\mult x_{I^c})}{3},1}^{\hat{a}}} \\
		&\quad + \prod_{π∈\Pi} \abs{π}!^α \abs{\mult k_π}!^β C_1^N \abs{\mult k_π}^{N\max\set[\big]{\frac{1}{\hat{a}}+\frac{a}{d},\frac{1}{d}}} N!^b \biggl(\max\set[\bigg]{\frac{\dist(\mult x_I,\mult x_{I^c})}{3},1}\biggr)^{-N\min\set[\big]{1,\frac{(1-a)\hat{a}}{d}}} \Biggr).
	\end{align*}
	Finally, \cref{lem:SumSetPartition} yields
	\begin{align*}
		\MoveEqLeft[1] \abs[\big]{κ_{\mult k}(\mult x;n)}\\
		&\le 4^p (4C_1)^{\abs{\mult k}}
		\biggl( p!^{\max\set{1,a}} \abs{\mult k}!^{\max\set{γ_2,β}} e^{-c\max\set[\big]{\frac{\dist(\mult x_I,\mult x_{I^c})}{3},1}^{\hat{a}}}\\
		&\quad + p!^{\max\set{1,α}} \abs{\mult k}!^β C_1^N \abs{\mult k}^{N\max\set[\big]{\frac{1}{\hat{a}}+\frac{a}{d},\frac{1}{d}}} N!^b \biggl(\max\set[\bigg]{\frac{\dist(\mult x_I,\mult x_{I^c})}{3},1}\biggr)^{-N\min\set[\big]{1,\frac{(1-a)\hat{a}}{d}}} \biggr).
	\end{align*}
	Conclude using $α\le a\le 1$.
\end{proof}
In the following, it is necessary to calculate integrals of functions depending on $\dist(\mult x_I,\mult x_{I^c})$. To obtain our more precise cumulant bound, we use the following version of the coarea formula instead of drawing upon elementary bounds.
\begin{lemma}\label{lem:Coarea}
	Consider $f\colon ℝ_+ \to ℝ_+$ and $u\colon ℝ^d \to ℝ_+$ such that $u$ is Lipschitz continuous and homogeneous, i.e.\ $u(s\mult x) = \abs{s}u(\mult x)$ for all $s∈ℝ$ and $\mult x∈ℝ^d$. Then,
	\begin{equation*}
		\int_{ℝ^d} f\bigl(u(\mult x)\bigr) \dif \mult x
		= d\Vol_{d}(u\le1) \int_{ℝ_+} s^{d-1}f(s) \dif s.
	\end{equation*}
\end{lemma}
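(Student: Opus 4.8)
The plan is to reduce the $d$-dimensional integral to a one-dimensional one by pushing the Lebesgue measure forward along $u$, exploiting that homogeneity forces the sublevel sets of $u$ to be dilates of the single set $\set{u\le1}$. Throughout, write $V(r) = \Vol_d(\set{\mult x∈ℝ^d \given u(\mult x)\le r})$ for $r\ge0$; since $u$ is continuous (being Lipschitz), these sublevel sets are Borel and $V$ is well defined. The asserted identity is one in $\intcc{0,∞}$, and we may assume $V(1)<∞$, the complementary case being immediate as both sides are then infinite for $f$ not vanishing almost everywhere (and trivially zero otherwise).

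First I would record the scaling relation for the sublevel sets. For $r>0$ and $\mult x∈ℝ^d$, homogeneity gives $u(\mult x)\le r$ if and only if $u(\mult x/r) = u(\mult x)/r\le1$, so that $\set{u\le r} = r\set{u\le1}$. Taking volumes and using $\Vol_d(rA) = r^d\Vol_d(A)$ yields
\begin{equation*}
	V(r) = r^d\,\Vol_d(u\le1) = r^d V(1),\qquad r>0.
\end{equation*}
In particular, the cone $\set{u=0} = \bigcap_{r>0}\set{u\le r}$ has volume $\lim_{r\to0} r^d V(1) = 0$ by continuity of the measure from above, where finiteness of $V(1)$ is used.

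Next I would introduce the image measure $μ$ of the $d$-dimensional Lebesgue measure under the map $u\colon ℝ^d\to ℝ_+$. The abstract change-of-variables formula for pushforwards, valid for every non-negative measurable $f$, reads
\begin{equation*}
	\int_{ℝ^d} f\bigl(u(\mult x)\bigr) \dif\mult x = \int_{ℝ_+} f(s) \dif μ(s).
\end{equation*}
Its distribution function is $μ(\intcc{0,r}) = V(r) = V(1) r^d$, which is absolutely continuous with derivative $s\mapsto d\,V(1)\,s^{d-1}$ and carries no atom at the origin because $\set{u=0}$ is null; hence $\dif μ(s) = d\,V(1)\,s^{d-1}\dif s$, and substituting this density gives the assertion. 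The pushforward identity itself is obtained by first checking it for indicators $f = \ind_{\intcc{0,r}}$, where it is exactly $V(r) = V(1) r^d$, and then extending by linearity and monotone convergence.

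The only genuinely delicate point is the behaviour at the origin, namely excluding an atom of $μ$ at $0$ produced by the cone $\set{u=0}$; this is precisely what the volume computation $\Vol_d(u=0) = 0$ settles, and it is here that finiteness of $V(1)$ enters. Everything else is bookkeeping: Lipschitz continuity is used only to guarantee measurability of $u$ and of its sublevel sets, so that $μ$ and $V$ are well defined. I expect no obstacle beyond this case distinction. Alternatively, one could derive the statement from the classical coarea formula, observing that $\abs{\nabla u}$ is homogeneous of degree $0$ and integrating the level-set contributions against the radial variable; the scaling argument above is more direct and sidesteps the differentiability issues of $u$.
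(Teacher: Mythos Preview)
Your argument is correct and takes a genuinely different route from the paper. The paper invokes the classical coarea formula, uses homogeneity to see that $\nabla u$ is homogeneous of degree zero, rescales the level sets $\{u=s\}$ to $\{u=1\}$ to factor the integral as $\int_{ℝ_+} s^{d-1}f(s)\,\dif s$ times a constant, and then identifies that constant by plugging in $f=\ind_{\intcc{0,1}}$. You instead bypass the coarea machinery entirely: the homogeneity relation $\{u\le r\}=r\{u\le1\}$ gives $V(r)=r^dV(1)$ directly, so the pushforward of Lebesgue measure under $u$ has the explicit density $d\,V(1)\,s^{d-1}$, and the abstract change-of-variables formula finishes. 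Your approach is more elementary---no Rademacher, no Hausdorff measure on level sets, no division by $\norm{\nabla u}$---and in particular sidesteps the implicit assumption in the coarea argument that $\nabla u\ne0$ almost everywhere. One small caveat: your dismissal of the case $V(1)=∞$ is not watertight in full generality (e.g.\ if the cone $\{u=0\}$ itself has positive measure the two sides need not agree), but this degeneracy is equally unaddressed by the paper's proof and is irrelevant for the intended application, where $u=\norm{\cdot}_{sig}$ is a norm and $V(1)<∞$.
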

\begin{proof}
	Recall the coarea formula: For any Lipschitz continuous function $u\colon ℝ^d \to ℝ_+$ and any function $f\colon ℝ_+ \to ℝ_+$ it holds
	\begin{equation*}
		\int_{ℝ^d} f\bigl(u(x)\bigr) \dif \mult x = \int_{ℝ_+} \int_{u(\mult x)=s} \frac{f(s)}{\norm{\nabla u(\mult x)}} \dif \mult x \dif s,
	\end{equation*}
	where the inner integration on the right-hand side is with respect to the $d-1$-dimensional Lebesgue measure restricted to the set $\set{\mult x∈ℝ^d \given u(\mult x)=s}$. We now apply this formula to our case above. Notice that
	\begin{equation*}
		\abs{s}\nabla_{\mult x}u(\mult x)\big\vert_{\mult x=s\mult y}
		= \nabla_{\mult y} u(t\mult y)
		= \abs{s} \nabla_{\mult y} u(\mult y),
	\end{equation*}
	since $u$ is homogeneous. Thus,
	\begin{align*}
		\int_{ℝ^d} f\bigl(u(x)\bigr) \dif \mult x
		&= \int_{ℝ_+} \int_{u(\mult x)=s} \frac{f(s)}{\norm{\nabla u(\mult x)}} \dif \mult x \dif s
		= \int_{ℝ_+} \int_{u(s\mult y)=s} \frac{s^{d-1}f(s)}{\norm[\big]{\nabla u(\mult x)\big\vert_{\mult x=s\mult y}}} \dif \mult y \dif s\\
		&= \int_{ℝ_+} s^{d-1}f(s) \dif s \int_{u(\mult y)=1} \frac{1}{\norm[\big]{\nabla_{\mult y}u(\mult y)}} \dif \mult y.
	\end{align*}
	Next, apply this equation to the function $f(s) = \ind_{\set{s\le1}}$ to conclude
	\begin{equation*}
		\Vol_{d}\bigl(u\le1\bigr) = \frac{1}{d}\int_{u(\mult y)=1} \frac{1}{\norm[\big]{\nabla_{\mult y}u(\mult y)}} \dif \mult y.
	\end{equation*}
	The claim now follows by combining both equations.
\end{proof}
Let us introduce the norm $\norm{\cdot}_{sig}$ on $(ℝ^d)^{p-1}$:
\begin{equation*}
	\norm{\mult x}_{sig} = \max_{I\subseteq\set{1,…,p-1}}\dist\bigl((0,\mult x_I),\mult x_{I^c}\bigr).
\end{equation*}
The index “$\text{sig}$” stands for “sphere of influence graph” as the norm recognizes if the sphere of influence graph of $(0,\mult x)$ is connected. The precise statement is part of the proof of the following volume bound:
\begin{lemma}\label{lem:VolBound}
	The following volume bound holds:
	\begin{equation*}
		\Vol_{d(p-1)}\bigl(\norm{\cdot}_{sig}\le1\bigr) \le (eϑ_d)^{p-1} p!.
	\end{equation*}
\end{lemma}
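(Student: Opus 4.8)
The plan is to recognize the sub-level set $\set{\norm{\cdot}_{sig}\le1}$ as the set of configurations whose unit-distance graph is connected, and then to estimate its volume by a union bound over spanning trees. First I would set $x_0=0$ and view a point $\mult x=(x_1,\dots,x_{p-1})\in(ℝ^d)^{p-1}$ together with the origin as the $p$-point configuration $\set{x_0,x_1,\dots,x_{p-1}}$. The defining maximum runs over the distances $\dist((0,\mult x_I),\mult x_{I^c})$; since the block $(0,\mult x_I)$ always contains the origin while $\mult x_{I^c}$ is nonempty exactly when $I\ne\set{1,\dots,p-1}$, these distances range precisely over all bipartitions of $\set{x_0,\dots,x_{p-1}}$ into two nonempty parts. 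Consequently $\norm{\mult x}_{sig}\le1$ holds if and only if every such bipartition admits a crossing pair at distance at most $1$, that is, if and only if the graph joining two of the points whenever their distance is $\le1$ is connected. This is exactly the sphere-of-influence reading alluded to in the definition of $\norm{\cdot}_{sig}$.

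Next I would use that every connected graph on $p$ labelled vertices contains a spanning tree, so that $\set{\norm{\cdot}_{sig}\le1}\subseteq\bigcup_T A_T$, where $T$ ranges over all spanning trees of the complete graph on $\set{0,1,\dots,p-1}$ and $A_T=\set{\mult x\colon \norm{x_u-x_v}\le1 \text{ for every edge } \set{u,v}\in T}$. Subadditivity of Lebesgue measure then yields $\Vol_{d(p-1)}(\norm{\cdot}_{sig}\le1)\le\sum_T\Vol_{d(p-1)}(A_T)$, and by Cayley's formula there are exactly $p^{p-2}$ such trees.

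To bound a single $\Vol_{d(p-1)}(A_T)$, I would root $T$ at the vertex $0$, whose position $x_0=0$ is fixed and hence not integrated, so that each of the remaining $p-1$ vertices $v$ has a unique parent $\pi(v)$ and the constraints defining $A_T$ reduce to $\norm{x_v-x_{\pi(v)}}\le1$. Integrating out the $p-1$ non-root coordinates one at a time, at each stage removing a leaf of the remaining tree, confines that leaf to the unit ball around its already-placed parent and contributes a single factor $ϑ_d$; hence $\Vol_{d(p-1)}(A_T)=ϑ_d^{p-1}$ independently of $T$. Combining the three steps gives $\Vol_{d(p-1)}(\norm{\cdot}_{sig}\le1)\le p^{p-2}ϑ_d^{p-1}$.

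Finally I would reduce the claim to the purely numerical inequality $p^{p-2}\le e^{p-1}p!$. This follows from the elementary bound $p^p\le e^p p!$, itself a consequence of $p^p/p!\le\sum_{k\ge0}p^k/k!=e^p$: dividing by $p^2$ gives $p^{p-2}\le e^{p-1}p!\,(e/p^2)\le e^{p-1}p!$ for $p\ge2$, while for $p=1$ the region is a single point of volume $1=(eϑ_d)^0\,1!$ and the bound holds with equality. The only genuinely nonroutine step is the combinatorial identification in the first paragraph; once the sub-level set is seen to be the connectivity region, the remainder is a standard spanning-tree union bound together with Cayley's formula and a Stirling-type estimate.
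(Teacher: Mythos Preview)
Your proof is correct and follows essentially the same approach as the paper: identify $\set{\norm{\cdot}_{sig}\le1}$ with the connectivity region of the unit-distance graph on $\set{x_0,\dots,x_{p-1}}$, bound its volume by a union over spanning trees via Cayley's formula to get $ϑ_d^{p-1}p^{p-2}$, and then absorb $p^{p-2}$ into $e^{p-1}p!$. The paper stops at $ϑ_d^{p-1}p^{p-2}$ without spelling out the last numerical estimate, but otherwise the arguments are identical.
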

\begin{proof}
	For reasons of simplicity, denote $x_0=0$. Consider the sphere of influence graph of radius $r∈ℝ_+$ associated to $x_0,…,x_{p-1}$: Its vertex set is $\set{0,…,p-1}$, and there is an edge between $i$ and $j$ if and only if $x_i$ and $x_j$ are at distance at most $r$. Denote this graph for short by $\SIG_r(0,\mult x)$. We first show that $\norm{\mult x}_{sig} \le r$ if and only if $\SIG_r(0,\mult x)$ is connected. Indeed, this can be easily seen when considering the contraposition. If $\SIG_r(0,\mult x)$ is not connected, there is a set $I\subsetneq\set{1,…,p-1}$ such that $\set{0}\cup I$ and $I^c$ are not connected. Hence, $\norm{\mult x}_{sig} \ge \dist((0,\mult x_I),\mult x_{I^c}) > r$. For the other direction, assume that $\norm{\mult x}>r$. Then, there is a set $I$ such that $\dist((0,\mult x_I),\mult x_{I^c}) > r$. So clearly, $\set{0}\cup I$ and $I^c$ cannot be connected in $\SIG_r(0,\mult x)$.
	
	Let us now calculate $\Vol_{d(p-1)}(\norm{\cdot}_{sig}\le1)$. Since $\SIG_1(0,\mult x)$ is connected for any $\mult x$ with $\norm{\mult x}_{sig} \le 1$, we can find a minimal spanning tree of the graph. Let us denote its edges by $\set{(s(i),e(i))\given i∈\set{1,…,p-1}}$, where $s$ and $e$ are permutations of $\set{0,…,p-1}$ with $s(0)=0$ and $e(i)∈\set{s(j)\given j<i}$, i.e.\ each new vertex $s(i)$ is connected to one of the already discovered vertices $s(1),…,s(i-1)$. Then,
	\begin{align*}
		\MoveEqLeft \Vol_{d(p-1)}\bigl(\norm{\mult x}_{sig} \le 1\bigr)\\
		&\le \sum_{\text{labelled tree } t} \int_{\set{\mult x\given \SIG_1(0,\mult x)\text{ has minimal spanning tree } t}} \dif \mult x\\
		&= \sum_{\text{labelled tree } t} \prod_{i=1}^p \int_{\norm{x_{s(i)}-x_{e(i)}}\le1} \dif x_{e(i)}\\
		&= \sum_{\text{labelled tree } t} ϑ_d^{p-1}\\
		&= ϑ_d^{p-1} p^{p-2}.
	\end{align*}
	This concludes the proof.
\end{proof}
\begin{remark}
	The exponent of $p!$ cannot be reduced.
\end{remark}
\begin{corollary}\label{cor:IntegrateTreeDist}
	For any $p,l∈ℕ$ with $l>d(p-1)+1$, there exists a constant $C\ge1$ such that
	\begin{equation*}
		\int_{(ℝ^d)^{p-1}} \bigl(\max\set{\norm{\mult x}_{sig},1}\bigr)^{-l} \dif \mult x
		\le C^p p!.
	\end{equation*}
	For any $c,\hat{a}>0$, there exists a constant $C\ge1$ such that
	\begin{equation*}
		\int_{(ℝ^d)^{p-1}} e^{-c\max\set{\norm{\mult x}_{sig},1}^{\hat{a}}} \dif \mult x
		\le C^p p!^{1+\frac{d}{\hat{a}}}.
	\end{equation*}
\end{corollary}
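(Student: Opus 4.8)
The plan is to handle both bounds with the same mechanism, namely the coarea formula of \cref{lem:Coarea} applied on the $d(p-1)$-dimensional space $(ℝ^d)^{p-1}$ to the function $u=\norm{\cdot}_{sig}$. First I would verify that $\norm{\cdot}_{sig}$ meets the hypotheses of that lemma. Homogeneity holds for every $s∈ℝ$: since $x_0=0$ is fixed, scaling $\mult x$ by $s$ scales each $\dist\bigl((0,\mult x_I),\mult x_{I^c}\bigr)=\min\norm{x_i-x_j}$ by $\abs{s}$, and the maximum over $I$ of such functions is again homogeneous. Lipschitz continuity follows because every map $\mult x\mapsto\dist\bigl((0,\mult x_I),\mult x_{I^c}\bigr)$ is $1$-Lipschitz and a maximum of finitely many Lipschitz functions is Lipschitz. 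Although \cref{lem:Coarea} is stated in dimension $d$, its proof is dimension-free and applies verbatim with $d$ replaced by $d(p-1)$. Applying it (for $p\ge2$; the case $p=1$ is trivial) reduces each integral to the product of $d(p-1)\Vol_{d(p-1)}\bigl(\norm{\cdot}_{sig}\le1\bigr)$ with a one-dimensional radial integral $\int_{ℝ_+}s^{d(p-1)-1}f(s)\dif s$, and \cref{lem:VolBound} bounds the volume factor by $(eϑ_d)^{p-1}p!$.

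For the first bound I would take $f(s)=\max\set{s,1}^{-l}$ and split the radial integral at $s=1$: the part over $\intcc{0,1}$ equals $\frac{1}{d(p-1)}$, while the part over $\intco{1,∞}$ equals $\frac{1}{l-d(p-1)}$, the hypothesis $l>d(p-1)+1$ guaranteeing both convergence and $l-d(p-1)\ge2$. Multiplying by the prefactor $d(p-1)$ turns the first contribution into $1$ and keeps the second below $\frac{d(p-1)}{2}$; since $(eϑ_d)^{p-1}\bigl(1+\frac{d(p-1)}{2}\bigr)\le C^p$ uniformly in the admissible $l$, what remains is the claimed $C^p p!$.

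For the second bound I would take $f(s)=e^{-c\max\set{s,1}^{\hat a}}$. The contribution of $\intcc{0,1}$ is harmless, and on $\intco{1,∞}$ the substitution $u=cs^{\hat a}$ turns the tail into a Gamma integral, giving $\frac{1}{\hat a}c^{-d(p-1)/\hat a}Γ\bigl(d(p-1)/\hat a\bigr)$. The one genuinely non-routine step — and the main obstacle — is the factorial comparison
\begin{equation*}
	Γ\Bigl(\frac{d(p-1)}{\hat a}\Bigr)\le C^p\,p!^{\,d/\hat a},
\end{equation*}
which I would read off from Stirling's formula: with $α=d/\hat a$ one checks $\log Γ\bigl(α(p-1)\bigr)-α\log(p!)=αp\log α+O(\log p)$, so the two sides differ by at most an exponential factor in $p$. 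As $c^{-d(p-1)/\hat a}$ is likewise at most exponential in $p$, multiplying through by the $p!$ coming from \cref{lem:VolBound} produces exactly $p!^{1+d/\hat a}$ up to a factor $C^p$. The exponent $1+\frac{d}{\hat a}$ thus arises precisely as the $p!$ from the volume count times the $p!^{d/\hat a}$ from the Gamma function, with all remaining prefactors ($d(p-1)$, $(eϑ_d)^{p-1}$, $c^{-d(p-1)/\hat a}$, and the constant from Stirling) absorbed into $C^p$.
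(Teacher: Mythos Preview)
Your proposal is correct and follows essentially the same route as the paper: apply \cref{lem:Coarea} in dimension $d(p-1)$ to $u=\norm{\cdot}_{sig}$, bound the volume factor by \cref{lem:VolBound}, and evaluate the two radial integrals (splitting at $s=1$ for the first, and reducing to a Gamma integral for the second). Your write-up is in fact more careful than the paper's terse proof—you explicitly verify homogeneity and Lipschitzness of $\norm{\cdot}_{sig}$ and spell out the Stirling comparison $Γ\bigl(d(p-1)/\hat a\bigr)\le C^p p!^{d/\hat a}$, both of which the paper leaves implicit.
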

\begin{proof}
	Combine \cref{lem:Coarea,lem:VolBound} with
	\begin{equation*}
		\int_{ℝ_+} s^{d(p-1)}\max\set{s,1}^{-l} \dif s
		= \frac{1}{d(p-1)+1} + \frac{1}{l-d(p-1)-1}
		\le \frac{2}{d}
	\end{equation*}
	and
	\begin{equation*}
		\int_{ℝ_+} s^{d(p-1)} e^{-c\max\set{s,1}^{\hat{a}}} \dif s
		\le \frac{1}{d(p-1)+1} + \frac{c^{-\frac{1+d(p-1)}{\hat{a}}}}{\hat{a}} Γ(\frac{1+d(p-1)}{\hat{a}})
	\end{equation*}
	to obtain the result.
\end{proof}
\begin{theorem}\label{thm:BoundCumulants}
	Assume that the point process $\cP$ satisfies $\EDC(a,\hat{a})$ and that the score function $ξ$ satisfies $\ST(b)$, $\MG(β)$ and $\PG(γ_1,γ_2)$. Then, there exists a constant $C\ge1$ such that for any $k∈ℕ$, $n∈ℕ$ and any bounded function $f \colon ℝ^d \to ℝ$ the following bound on cumulants holds:
	\begin{equation*}
		\abs[\big]{κ^{(k)}\bigl(μ_n^ξ(f)\bigr)} \le
		\begin{cases} n\norm{f}_∞^k C^k k!^{2+\max\set{γ_2,β} + \frac{d}{(1-a)\hat{a}} + \frac{bd^2}{(1-a)\hat{a}}} & \text{if }\frac{(1-a)\hat{a}}{d}\le 1,\\
		n\norm{f}_∞^k C^k k!^{2+\max\set{γ_2,β} + \frac{d}{\hat{a}} + a + bd} & \text{if }\frac{(1-a)\hat{a}}{d}\ge 1.
	\end{cases}
	\end{equation*}
\end{theorem}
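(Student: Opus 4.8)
The plan is to express the ordinary cumulant $κ^{(k)}(μ_n^ξ(f))$ through the factorial cumulant densities $κ_{\mult k}(\mult x;n)$ and then to integrate the clustering estimate of \cref{lem:BoundFactorialCumulantDensity} against the coarea bound of \cref{cor:IntegrateTreeDist}. The first step is to record the cumulant analogue of the moment expansion displayed in \cref{sec:Assumptions}: viewing $μ_n^ξ(f)$ as a linear functional of the $ξ$-weighted measure and applying Möbius inversion on the partition lattice to the refined Campbell-Mecke formula gives
\begin{equation*}
	κ^{(k)}\bigl(μ_n^ξ(f)\bigr)
	= \sum_{Π∈\cQ_k} \int_{(ℝ^d)^{\abs{Π}}} κ_{\mult k}(\mult x;n) \prod_{i=1}^{\abs{Π}} f\bigl(x_in^{-\frac{1}{d}}\bigr)^{\abs{π_i}} \dif \mult x,
\end{equation*}
where $Π=\set{π_1,…,π_{\abs{Π}}}$ and $\mult k=(\abs{π_1},…,\abs{π_{\abs{Π}}})$, so that $\abs{\mult k}=k$. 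This is the very same set-partition relation linking $m_{\mult k}$ to $κ_{\mult k}$, now read off for $μ_n^ξ(f)$; the Poissonian case, in which the truncated densities of higher order vanish and only the one-block partition contributes, fixes the normalisation.

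Next I would bound each summand. Since $ξ(x_i,\cP_n)=0$ whenever $x_i\notin W_n$, the density $κ_{\mult k}(\mult x;n)$ vanishes unless $\mult x∈W_n^{\abs{Π}}$, so the integration may be restricted to $W_n^{\abs{Π}}$ and the factor $\prod_i\abs{f}^{\abs{π_i}}\le\norm{f}_∞^k$ pulled out. The estimate of \cref{lem:BoundFactorialCumulantDensity} depends on $\mult x$ only through $\dist(\mult x_I,\mult x_{I^c})$ and holds for \emph{every} admissible $I$; writing $p=\abs{Π}$, fixing $x_1$ and translating it to the origin with $\mult y=(x_2-x_1,…,x_p-x_1)$, I would choose for each $\mult y$ the index set $I$ realising $\max_I\dist(\mult x_I,\mult x_{I^c})=\norm{\mult y}_{sig}$. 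Integrating $x_1$ over $W_n$ produces the linear factor $n=\Vol(W_n)$, and the residual integral over $(ℝ^d)^{p-1}$ is exactly of the form controlled by \cref{cor:IntegrateTreeDist}.

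It remains to choose the free integer $N$ at the threshold of integrability and to sum. By \cref{cor:IntegrateTreeDist}, the integral of $\max\set{\norm{\cdot}_{sig}/3,1}^{-N\varrho}$ over $(ℝ^d)^{p-1}$ is at most $C^pp!$ once $N\varrho>d(p-1)+1$, where $\varrho=\frac{(1-a)\hat{a}}{d}$ is the decay rate of the polynomial term supplied by \cref{lem:BoundFactorialCumulantDensity} in the regime $(1-a)\hat{a}\le d$, while in the regime $(1-a)\hat{a}\ge d$ the same argument, fed by the second case of \cref{thm:FastDecayXiWeightedMeasure}, gives $\varrho=1$ and the prefactor $\abs{\mult k}^{N(\frac{1}{\hat{a}}+\frac{a}{d})}$. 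Taking $N$ of order $\frac{dp}{\varrho}$ and feeding it into the prefactors $\abs{\mult k}^{N/d}$ and $N!^b$ (respectively $\abs{\mult k}^{N(\frac{1}{\hat{a}}+\frac{a}{d})}$ and $N!^b$), the elementary inequalities $k^{ck}\le C^kk!^{c}$ and $(cp)!\le C^pp!^{c}$ turn them into $k!^{\frac{d}{(1-a)\hat{a}}}$ and $k!^{\frac{bd^2}{(1-a)\hat{a}}}$ (respectively $k!^{\frac{d}{\hat{a}}+a}$ and $k!^{bd}$); the exponential term of \cref{lem:BoundFactorialCumulantDensity}, integrated by the second bound of \cref{cor:IntegrateTreeDist}, contributes only $k!^{\max\set{γ_2,β}+\frac{d}{\hat{a}}}$ and is dominated in both regimes. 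Each set partition into $p$ blocks thus costs one factor $p!$ from \cref{lem:BoundFactorialCumulantDensity} and one more from the coarea integration, and summing over the $\stirlingII{k}{p}$ partitions with $p$ blocks via $\stirlingII{k}{p}p!\le p^k\le C^kk!$ collapses $\sum_p\stirlingII{k}{p}p!^2$ into $C^kk!^2$, which supplies the leading exponent $2$.

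The crux is this last balancing: $N$ must be tuned so that the coarea integral over a domain of growing dimension only barely converges, while the prefactors $\abs{\mult k}^{N/d}$ and $N!^b$ that such a large $N$ forces upon us must be prevented from inflating the power of $k!$ beyond the advertised value. This simultaneous control of spatial decay and of factorial growth in $k$ is precisely what the elementary argument of \cite{BYY19GeometricStatistics} forgoes, and it is what upgrades their central limit theorem to the present fine asymptotics; by comparison, the combinatorial identity of the first step is routine.
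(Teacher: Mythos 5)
Your proposal is correct and takes essentially the same approach as the paper: the set-partition (moment--cumulant) expansion of the cumulant into factorial cumulant densities, the bound of \cref{lem:BoundFactorialCumulantDensity} applied with the optimal separating set $I$ (equivalently $\norm{\cdot}_{sig}$ after centering at $x_1$), translation invariance combined with the coarea estimate of \cref{cor:IntegrateTreeDist}, and the choice of $N$ of order $dp/\min\set{1,(1-a)\hat{a}/d}$ are exactly the steps of the paper's proof, with the two regimes and resulting exponents matching. The only cosmetic difference is the final summation over partitions, where your surjection bound $\stirlingII{k}{p}\,p! \le p^k \le e^k k!$ replaces the paper's appeal to \cref{lem:SumSetPartition}; both yield the factor $C^k k!^2$.
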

\begin{proof}
	Throughout the proof there are set partitions $Π∈\cQ_k$ appearing. By convention, we denote the parts of $Π$ by $π_1,…,π_p$, where $p=\abs{Π}$. By using the refined Campbell-Mecke formula from \cref{eq:refinedCampbell}, the moments of $μ_n^ξ(f)$ are given by
	\begin{equation*}
		\Ex[\big]{μ_n^ξ(f)^k}
		= \sum_{p=1}^k \sum_{\substack{Π∈\cQ_k\\\abs{Π}=p}} \int_{W_n^p} f\bigl(x_1n^{-\frac{1}{d}}\bigr)^{\abs{π_1}}\dotsm f\bigl(x_pn^{-\frac{1}{d}}\bigr)^{\abs{π_p}} m_{(\abs{π_1},…,\abs{π_p})}(\mult x;n) \dif \mult x.
	\end{equation*}
	Apply the moment-cumulant formula to this equation to obtain
	\begin{equation*}
		κ^{(k)}\bigl(μ_n^ξ(f)\bigr) = \sum_{p=1}^k \sum_{\substack{Π∈\cQ_k\\\abs{Π}=p}} \int_{W_n^p} f\bigl(x_1n^{-\frac{1}{d}}\bigr)^{\abs{π_1}} \dotsm f\bigl(x_p n^{-\frac{1}{d}}\bigr)^{\abs{π_p}} κ_{(\abs{π_1},…,\abs{π_p})}(\mult x;n) \dif \mult x.
	\end{equation*}
	Thus, \cref{lem:BoundFactorialCumulantDensity} implies the existence of a constant $C_1\ge1$ with
	\begin{align*}
		\MoveEqLeft \abs[\big]{κ^{(k)}\bigl(μ_n^ξ(f)\bigr)}
		\le \norm{f}_∞^k \sum_{p=1}^k \sum_{\substack{Π∈\cQ_k\\\abs{Π}=p}} \int_{W_n^p} \abs[\big]{κ_{(\abs{π_1},…,\abs{π_p})}(\mult x;n)} \dif \mult x\\
		&\le \norm{f}_∞^k C_1^k \sum_{p=1}^k \sum_{\substack{Π∈\cQ_k\\\abs{Π}=p}} \int_{W_n^p}\biggl( p! k!^{\max\set{γ_2,β}} e^{-c\max\set[\big]{\frac{\max_I \dist(\mult x_I,\mult x_{I^c})}{3},1}^{\hat{a}}}\\
		&\quad + p! k!^β C_1^N k^{N\max\set[\big]{\frac{1}{\hat{a}}+\frac{a}{d},\frac{1}{d}}} N!^b \Bigl(\max\set[\Big]{\frac{\max_{I}\dist(\mult x_I,\mult x_{I^c})}{3},1}\Bigr)^{-N\min\set[\big]{1,\frac{(1-a)\hat{a}}{d}}} \biggr) \dif \mult x.
	\end{align*}
	For the integral observe that, by translation invariance and \cref{cor:IntegrateTreeDist},
	\begin{align*}
		\MoveEqLeft \int_{W_n^p} \max\set[\Big]{\frac{\max_{I}\dist(\mult x_I,\mult x_{I^c})}{3},1}^{-N\min\set[\big]{1,\frac{(1-a)\hat{a}}{d}}} \dif \mult x\\
		&\le \Vol(W_n) * \int_{(ℝ^d)^{p-1}} \max\set[\Big]{\frac{\max_{I}\dist((0,\mult y_I),\mult y_{I^c})}{3},1}^{-N\min\set[\big]{1,\frac{(1-a)\hat{a}}{d}}} \dif \mult y\\
		&= 3^{d(p-1)} n \int_{(ℝ^d)^{p-1}} \max\set[\big]{\norm{\mult y}_{sig},1}^{-N\min\set[\big]{1,\frac{(1-a)\hat{a}}{d}}} \dif \mult y\\
		&\le 3^{d(p-1)} n C_2^p p!,
	\end{align*}
	where $N = \frac{dp}{\min\set{1,\frac{(1-a)\hat{a}}{d}}}$ was chosen and $C_2$ denotes the constant from \cref{cor:IntegrateTreeDist}. Similarly,
	\begin{align*}
		\MoveEqLeft \int_{W_n^p} e^{-c\max\set[\big]{\frac{\max_{I}\dist(\mult x_I,\mult x_{I^c})}{3},1}^{\hat{a}}} \dif \mult x\\
		&\le \Vol(W_n) * \int_{(ℝ^d)^{p-1}} e^{-c\max\set[\big]{\frac{\max_{I}\dist((0,\mult y_I),\mult y_{I^c})}{3},1}^{\hat{a}}} \dif \mult y\\
		&= 3^{d(p-1)} n \int_{(ℝ^d)^{p-1}} e^{-c\max\set[\big]{\frac{\max_{I}\dist((0,\mult y_I),\mult y_{I^c})}{3},1}^{\hat{a}}} \dif \mult y\\
		&\le 3^{d(p-1)} n C_2^p p!^{1+\frac{d}{\hat{a}}}.
	\end{align*}
	Combining both bounds with \cref{lem:SumSetPartition} enables us to conclude that there exists a constant $C_3\ge1$ such that
	\begin{align*}
		\MoveEqLeft \abs[\big]{κ^{(k)}\bigl(μ_n^ξ(f)\bigr)}\\
		&\le \norm{f}_∞^k C_1^k \sum_{p=1}^k \sum_{\substack{Π∈\cQ_k\\\abs{Π}=p}} \int_{W_n^p} \Bigl( p! k!^{\max\set{γ_2,β}} e^{-c\max\set[\big]{\frac{\max_I \dist(\mult x_I,\mult x_{I^c})}{3},1}^{\hat{a}}}\\
		&\quad + p! k!^β C_1^N k^{N\max\set{\frac{1}{\hat{a}}+\frac{a}{d},\frac{1}{d}}} N!^b \max\set[\Big]{\frac{\max_{I}\dist(\mult x_I,\mult x_{I^c})}{3},1}^{-N\min\set[\big]{1,\frac{(1-a)\hat{a}}{d}}} \Bigr) \dif \mult x\\
		&\le n\norm{f}_∞^k C_3^k \sum_{p=1}^k\sum_{\substack{Π∈\cQ_k\\\abs{Π}=p}} \biggl(  p!^2 k!^β k!^{\max\set[\big]{\frac{d}{\hat{a}}+a,\frac{d}{(1-a)\hat{a}}}} p!^{\max\set[\big]{db,\frac{bd^2}{(1-a)\hat{a}}}} + p!^2 k!^{\max\set{γ_2,β}} p!^\frac{d}{\hat{a}} \biggr)\\
		&\le n\norm{f}_∞^k (2C_3)^k \Bigl( k!^{2+β+ \max\set[\big]{\frac{d}{\hat{a}}+a,\frac{d}{(1-a)\hat{a}}} + \max\set[\big]{db,\frac{bd^2}{(1-a)\hat{a}}}} + k!^{2+\max\set{γ_2,β}+\frac{d}{\hat{a}}} \Bigr).
	\end{align*}
	This proves the result.
\end{proof}
\begin{remark}
	By interchanging the order of summation, i.e.\ first summing with respect to the set partition given in the proof of \cref{thm:BoundCumulants} and only afterwards with respect to the sums corresponding to the set partitions in \cref{lem:BoundFactorialCumulantDensity}, we obtain
	\begin{equation*}
		\abs[\big]{κ^{(k)}\bigl(μ_n^ξ(f)\bigr)} \le
		\begin{cases} n\norm{f}_∞^k C^k k!^{1 + \max\set{1,a+γ_2,a+β} + \frac{d}{(1-a)\hat{a}} + \frac{bd^2}{(1-a)\hat{a}}} & \text{if }\frac{(1-a)\hat{a}}{d}\le 1,\\
		n\norm{f}_∞^k C^k k!^{1 + \max\set{1,a+γ_2,a+β} + \frac{d}{\hat{a}} + a + bd} & \text{if }\frac{(1-a)\hat{a}}{d}\ge 1,
	\end{cases}
	\end{equation*}
	instead. This is a slight improvement and reflects the version obtained in \cite{ERS15MDPForStabilizingFunctionals}.
\end{remark}
\begin{remark}
	In the case of Poisson point process input we might choose $a=0$ and $\hat{a}=∞$ to obtain the bound on cumulants
	\begin{equation*}
		\abs[\big]{κ^{(k)}\bigl(μ_n^ξ(f)\bigr)}
		\le n\norm{f}_∞^k C^k k!^{2 + \max\set{γ_2,β} + bd}.
	\end{equation*}
	Besides the additional power growth condition reflected in the constant $γ_2$, we recover the result previously found in \cite{ERS15MDPForStabilizingFunctionals}. Actually, a closer look at the proof reveals that the case of Poisson input can also be dealt with, without assuming the power growth condition. This is due to the fact that the factorial moment expansion from \cref{lem:FMEexpansion} implies in this case that
	\begin{equation*}
		\abs[\big]{\tilde{m}_{\mult k}(\mult x;n)-\tilde{m}_{\mult k_I}(\mult x_I;n)\tilde{m}_{\mult k_{I^c}}(\mult x_{I^c};n)} = 0.
	\end{equation*}
\end{remark}
We are now able to conclude our asymptotic results from the bound on cumulants by using the following compilation of results presented in \cite{DE13MDPViaCumulants} and \cite{SS91LimitTheorems}. Recall that by $\Normaldist_{0,1}$ we denote a standard normal distributed random variable.
\begin{proposition}\label{thm:ConsequencesBoundCumulants}
	Consider a sequence $(X_n)_{n∈ℕ}$ of random variables with $\Ex{X_n}=0$ and $\Var{X_n}=1$ for all $n∈ℕ$. Assume there exists a constant $γ∈ℝ_+$ as well as a sequence $Δ_n∈\intoo{0,∞}$ such that
	\begin{equation*}
		\abs[\big]{κ^{(k)}(X_n)} \le Δ_n^{2-k}k!^{1+γ}
	\end{equation*}
	holds for all $k,n∈ℕ$. Then, the following statements are true:
	\begin{itemize}
		\item \emph{Berry-Esseen estimate:} There exists a constant $C>0$ such that
		\begin{equation*}
			\sup_{s∈ℝ} \abs[\big]{\Pr{X_n\le s} - \Pr{\Normaldist_{0,1}\le s}} \le CΔ_n^{-\frac{1}{1+2γ}}.
		\end{equation*}
		\item \emph{Concentration inequality:} For all $s∈ℝ_+$ and sufficiently large $n∈ℕ$ it holds
		\begin{equation*}
			\Pr[\big]{\abs{X_n} \ge s} \le 2 \exp\Bigl(-\frac{1}{4}\min\set[\Big]{\frac{s^2}{2^{1+γ}}, (sΔ_n)^\frac{1}{1+2γ}}\Bigr).
		\end{equation*}
		\item \emph{Moderate deviation principle:} For any sequence $(a_n)_{n∈ℕ}$ of real numbers with $\lim_{n→∞} a_n = ∞$ and $\lim_{n→∞} a_nΔ_n^{-\frac{1}{1+2γ}} = 0$, the sequence $(a_n^{-1}X_n)_{n∈ℕ}$ satisfies a moderate deviation principle on $ℝ$ with speed $a_n^2$ and Gaussian rate function $I(x)=\frac{x^2}{2}$.
	\end{itemize}
\end{proposition}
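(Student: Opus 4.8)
The plan is to recognize that the hypothesis $\abs{\kappa^{(k)}(X_n)} \le Δ_n^{2-k}k!^{1+γ}$ is precisely the cumulant (Statulevičius-type) condition under which the asymptotic machinery of the Lithuanian school applies, and then to invoke the corresponding statements from \cite{SS91LimitTheorems} and \cite{DE13MDPViaCumulants} directly, with $Δ_n$ and $γ$ playing the role of the characteristics. Since $X_n$ is already normalized via $\Ex{X_n}=0$ and $\Var{X_n}=1$, no further centering or rescaling is needed, and each of the three conclusions reduces to citing the appropriate result once the parameters are matched.

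First, for the \emph{Berry-Esseen estimate}, I would apply the Berry-Esseen bound of Saulis and Statulevičius (their Corollary~2.1): the cumulant bound with growth exponent $1+γ$ and scale $Δ_n$ yields $\sup_{s∈ℝ}\abs{\Pr{X_n\le s}-\Pr{\Normaldist_{0,1}\le s}} \le CΔ_n^{-1/(1+2γ)}$ for a constant $C$ depending only on $γ$. Second, for the \emph{concentration inequality}, I would use the Cramér-type upper deviation bounds from \cite{SS91LimitTheorems} (their Lemma~2.3/2.4). The cumulant bound controls the cumulant generating function, and optimizing over the two regimes — a Gaussian regime where the tail is governed by $\exp(-s^2/2^{1+γ})$, and a heavier-tail regime where it decays like $\exp(-c(sΔ_n)^{1/(1+2γ)})$ — produces exactly the stated $\min$-form. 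Applying the one-sided bound to both $X_n$ and $-X_n$ and summing gives the two-sided statement with the prefactor $2$. Third, for the \emph{moderate deviation principle}, I would invoke the result of Döring and Eichelsbacher \cite{DE13MDPViaCumulants}, who prove that under a cumulant bound of exactly this shape the rescaled family $(a_n^{-1}X_n)_{n∈ℕ}$ satisfies an MDP with speed $a_n^2$ and Gaussian rate function $I(x)=\frac{x^2}{2}$, precisely under the scaling $a_n\to∞$ and $a_nΔ_n^{-1/(1+2γ)}\to 0$.

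The main obstacle is not any deep argument but the careful \emph{alignment of normalizations} across the cited references. One must check that the present formulation of the cumulant condition, with the factor $Δ_n^{2-k}$ and the growth $k!^{1+γ}$, coincides with the characteristic pair $(Δ_n,γ)$ as used in \cite{SS91LimitTheorems} and \cite{DE13MDPViaCumulants} — where the condition is sometimes imposed only for $k\ge3$, stated with $Δ^{k-2}$ in a denominator, or phrased with $(k!)^{1+γ}$ rather than an equivalent power of $k$. Once these are reconciled (the cases $k=1,2$ being trivially compatible with $\Ex{X_n}=0$, $\Var{X_n}=1$), all three conclusions follow by direct citation. A minor additional point is extracting the explicit constants in the $\min$-form of the concentration bound, which requires tracing the crossover between the two regimes in the Saulis-Statulevičius large-deviation lemma rather than merely quoting its asymptotic consequence.
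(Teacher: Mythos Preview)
Your proposal is correct and matches the paper's approach exactly: the paper presents this proposition as a ``compilation of results presented in \cite{DE13MDPViaCumulants} and \cite{SS91LimitTheorems}'' and gives no proof beyond the citation, so your plan to invoke the Saulis--Statulevi\v{c}ius and D\"oring--Eichelsbacher results directly, after matching the cumulant-condition parameters, is precisely what is intended.
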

\begin{proof}[Proof of {\cref{thm:BerryEsseen,thm:Concentration,thm:MDP}}]
	We combine \cref{thm:BoundCumulants} and \cref{thm:ConsequencesBoundCumulants} to conclude: 
	Define
	\begin{equation*}
		γ=
		\begin{cases}
			1 + \max\set{γ_2,β} + \frac{d}{(1-a)\hat{a}} + \frac{bd^2}{(1-a)\hat{a}} & \text{if }\frac{(1-a)\hat{a}}{d} \le 1,\\
			1 + \max\set{γ_2,β} + \frac{d}{\hat{a}} + a + bd & \text{if }\frac{(1-a)\hat{a}}{d} \ge 1.
		\end{cases}
	\end{equation*}
	Then, the $k$-th cumulant is bounded by
	\begin{equation*}
		\abs[\bigg]{κ^{(k)}\biggl(\frac{μ_n^ξ(f)-\Ex[\big]{μ_n^ξ(f)}}{σ_n^ξ(f)}\biggr)}
		\le n \biggl(\frac{C_1\norm{f}_∞}{σ_n^ξ(f)}\biggr)^k k!^{1+γ}.
	\end{equation*}
	Since $\lim_{n→∞} \frac{σ_n^ξ(f)}{\sqrt{n}} = σ(ξ) \bigl(\int_{W_1} f(x)^2 \dif x\bigr)^\frac{1}{2} > 0$, there exists a constant $C_2>0$ such that $σ_n^ξ(f) \ge \sqrt{n}C_2^{-1} σ(ξ) (\int_{W_1} f(x)^2 \dif x)^\frac{1}{2}$. Define $C = \frac{σ(ξ)(\int_{W_1}f(x)^2 \dif x)^\frac{1}{2}}{C_1C_2\norm{f}_∞}$. Then,
	\begin{equation*}
		\abs[\bigg]{κ^{(k)}\biggl(\frac{μ_n^ξ(f)-\Ex[\big]{μ_n^ξ(f)}}{σ_n^ξ(f)}\biggr)}
		\le \sqrt{n}^{2-k} C^k k!^{1+γ}
	\end{equation*}
	 and we can thus apply \cref{thm:ConsequencesBoundCumulants} with $Δ_n = \frac{\sqrt{n}}{C\max\set{1,C^2}}$.
\end{proof}
\begin{proof}[Proof of {\cref{thm:SLLN}}]
	The concentration inequality from \cref{thm:Concentration} yields
	\begin{equation*}
		\Pr[\Big]{\abs[\big]{μ_n^ξ(f)-\Ex[\big]{μ_n^ξ(f)}} \ge s\sqrt{n}^{1+ε}}
		\le 2 \exp\Biggl(-\frac{1}{4}\min\set[\Bigg]{\frac{n^{1+ε}s^2}{σ_n^ξ(f)^2 2^{1+γ}}, C\biggl(\frac{n^{2+ε}s^2}{σ_n^ξ(f)^2}\biggr)^\frac{1}{2+4γ}}\Biggr).
	\end{equation*}
	Since $\lim_{n→∞} \frac{σ_n^ξ(f)}{\sqrt{n}} = σ(ξ)\bigl(\int_{W_1}f(x)^2\dif x\bigr)^\frac{1}{2}>0$, there exists a constant $C_1>0$ such that $σ_n^ξ(f) \le C_1 \sqrt{n}$. Hence,
	\begin{align*}
		\MoveEqLeft \Pr[\Big]{\abs[\big]{μ_n^ξ(f)-\Ex[\big]{μ_n^ξ(f)}} \ge s\sqrt{n}^{1+ε}}\\
		&\le 2 \exp\Biggl(-\frac{1}{4}\min\set[\Bigg]{\frac{n^εs^2}{2^{1+γ}C_1^2}, C\biggl(\frac{n^{1+ε}s^2}{C_1^2}\biggr)^\frac{1}{2+4γ}}\Biggr)\\
		&\le 2 \exp\biggl(-\frac{1}{4}\frac{n^εs^2}{2^{1+γ}C_1^2}\biggr) + 2 \exp\Biggl(C\biggr(\frac{n^{1+ε}s^2}{C_1^2}\biggl)^\frac{1}{2+4γ}\Biggr).
	\end{align*}
	Finally, notice that
	\begin{equation*}
		\sum_{n∈ℕ} 2\exp\biggl(-\frac{1}{4} \frac{n^εs^2}{C_1^22^{1+γ}}\biggr) < ∞
		\qquad\text{and}\qquad
		\sum_{n∈ℕ} 2\exp\Biggl(C\biggl(\frac{n^{1+ε}s^2}{C_1^2}\biggr)^\frac{1}{2+4γ}\Biggr) < ∞
	\end{equation*}
	and conclude via a Borel-Cantelli argument.
\end{proof}


\section*{Acknowledgement} \label{sec:Acknowledgement}

The author would like to thank Joseph Yukich for introducing him to this beautiful topic. He would also like to thank Peter Eichelsbacher for helpful discussions.


\bibliography{./SF_Literature.bib}

\end{document}